\numberwithin{equation}{section}
\theoremstyle{plain}
\newtheorem{theorem}{Theorem}[section]				
\newtheorem{proposition}[theorem]{Proposition}		
\newtheorem{corollary}[theorem]{Corollary}
\newtheorem{lemma}[theorem]{Lemma}
\newtheorem{remark}[theorem]{Remark}
\newtheorem{definition}[theorem]{Definition}
\newcommand{\Ibold}{{\bf I}}
\newcommand{\CBbb}{\mathbb C}
\newcommand{\Bcal}{\mathcal B}
\newcommand{\Ccal}{\mathcal C}
\newcommand{\Ecal}{\mathcal E}
\newcommand{\Fcal}{\mathcal F}
\newcommand{\Hcal}{\mathcal H}
\newcommand{\Ical}{\mathcal I}
\newcommand{\Kcal}{\mathcal K}
\newcommand{\Lcal}{\mathcal L}
\newcommand{\Mcal}{\mathcal M}
\newcommand{\Ncal}{\mathcal N}
\newcommand{\Ocal}{\mathcal O}
\newcommand{\Pcal}{\mathcal P}
\newcommand{\Tcal}{\mathcal T}
\newcommand{\Gfrak}{\mathfrak G}
\newcommand{\Sfrak}{\mathfrak S}
\newcommand{\gfrak}{\mathfrak g}
\newcommand{\sfrak}{\mathfrak s}
\newcommand{\Ascr}{\mathscr A}
\newcommand{\Escr}{\mathscr E}
\newcommand{\Pscr}{\mathscr P}
\newcommand{\SL}{\mathsf{SL}}
\newcommand{\slfrak}{\mathfrak{sl}}
\DeclareMathOperator{\Par}{Par}
\DeclareMathOperator{\Spar}{SPar}
\DeclareMathOperator{\End}{End}
\DeclareMathOperator{\Hom}{Hom}
\DeclareMathOperator{\Aut}{Aut}
\DeclareMathOperator{\aut}{aut}
\DeclareMathOperator{\id}{id}
\DeclareMathOperator{\ad}{ad}
\DeclareMathOperator{\Ad}{Ad}
\DeclareMathOperator{\tr}{tr}
\DeclareMathOperator{\ParAt}{{}^{\emph{par}}At}
\DeclareMathOperator{\SParAt}{{}^{\emph{spar}}At}
\DeclareMathOperator{\ParEnd}{Par}
\DeclareMathOperator{\SParEnd}{SPar}
\newcommand{\WE}{\widetilde {\mathcal{E}}}
\newcommand{\BOXE}{{\mathcal{E}}\boxtimes{\mathcal{E}}'}
\newcommand{\BOX}{\widetilde{\mathcal{E}}\boxtimes\widetilde{\mathcal{E}}'}
\newcommand{\D}{\Delta}
\newcommand{\dbar}{\overline\partial}
\newcommand{\lra}{\longrightarrow}
\newcommand{\At}{{\rm At}}
\newcommand{\isorightarrow}{\xrightarrow{
		\,\smash{\raisebox{-0.5ex}{\ensuremath{\sim}}}\,}}
\begin{document}
	
	\title[A parabolic analog of a theorem of Beilinson and Schechtman]{A parabolic analog of a theorem of Beilinson and Schechtman}

	\author[Biswas]{Indranil Biswas}
\address{Mathematics Department, Shiv Nadar University, NH91, Tehsil Dadri,
	Greater Noida, Uttar Pradesh 201314, India}
\email{indranil.biswas@snu.edu.in, indranil29@gmail.com}
	\author[Mukhopadhyay]{Swarnava Mukhopadhyay}
	\address{School of Mathematics,
		Tata Institute of Fundamental Research,
		Mumbai-400005, India}	\email{swarnava@math.tifr.res.in}
	\author[Wentworth]{Richard Wentworth}
	\address{Department of Mathematics,
		University of Maryland,
		College Park, MD 20742, USA}
	\email{raw@umd.edu}
	
	\thanks{I.B. 
		is	supported in part by a J.C. Bose fellowship and both I.B. and S.M. are also partly supported by DAE, India
		under project no. 1303/3/2019/R\&D/IIDAE/13820. S.M.
		received additional funding from the Science and Engineering Research
		Board, India (SRG/2019/000513). R.W. is 
		supported in part by National Science Foundation grants DMS-1906403
        and DMS-2204346.}
	
	\subjclass[2020]{Primary: 14H60, 32G34, 53D50; Secondary: 81T40, 14F08}

	\begin{abstract}For a simple, simply connected, complex group $G$, 
		we prove an explicit formula to compute the Atiyah class of
        parabolic determinant of cohomology line bundle on the moduli space of
		parabolic $G$-bundles. This generalizes an earlier result of Beilinson-Schechtman.  
	\end{abstract}
	\maketitle
	\allowdisplaybreaks
	
	\thispagestyle{empty}
\section{Introduction}

Lie algebroids play an important role in the geometry of sheaves on
manifolds. For the case of Atiyah algebras associated to principal bundles,
the Atiyah exact sequence packages the information of connections, and more
generally twisted differential operators, on the bundle. Of particular interest are
Atiyah algebras associated to line bundles on moduli spaces of bundles on curves. These are
almost always constructed via descent from a bigger parameter space. A
fundamental question is therefore how to relate the behavior of  Atiyah algebras of these natural line bundles to
infinitesimal joint deformations of the moduli spaces under this
correspondence. 

This question was addressed in the fundamental work of Beilinson-Schechtman
\cite{BS88}. In the context of relative moduli stacks of vector bundles over
families of smooth projective curves, the main result of \cite{BS88}
describes the Atiyah algebra of the determinant of cohomology 
in terms of a direct image of a \emph{trace complex} constructed from the
Atiyah algebra of the universal bundle. This construction is closely related to the
``localization functor'' \cite{BeilinsonBernstein:93,BeilinsonDrinfeld}. 

In \cite{Ginzburg}, Ginzburg gave an alternative construction
that is more amenable to the case of principal bundles. 
This is based on a general correspondence between quasi-Lie algebras and
certain differential  graded Lie algebras.  Applied to the moduli problem, 
this time for principal bundles, 
it can be seen from work of Bloch-Esnault \cite{BE} that the direct image of the
dgla constructed by Ginzburg also computes the Atiyah algebra of the
determinant of cohomology.  

The main goal of this paper is to extend these constructions to the case of
moduli stacks of principal bundles with parabolic structures. 
In order to state the result, let us introduce some notation. Let $\Ccal\,\longrightarrow\,
S$ be a versal family of smooth projective  curves with $n$ marked points
$p_1,\,\cdots,\, p_n$. Fix a simple, simply connected complex algebraic group
$G$ with Lie algebra $\gfrak$. Choose parabolic subgroups $P_1,\,\cdots,\, P_n$
of $G$ and associated weights $\bm\alpha\,=\,(\alpha_1,\,\cdots,\, \alpha_n)$. 
Let $ M_G^{\bm\alpha,rs}\,=\,M_G^{\bm\alpha,rs}(\Ccal/S)\rightarrow S$ be the relative moduli
space (over $S$) of regularly stable parabolic $G$-bundles. 
On $\Ccal\times_S M_G^{\bm\alpha,rs}$ there exists local universal bundles
$\Pcal$. Let ${}^{par}\Sfrak^{\bullet}_{\mathcal{C}\times_S M_{G}^{\bm \alpha ,rs},\pi,S}(\Pcal)$
denote the relative Ginzburg complex associated to $\Pcal$. These local
complexes glue together to give a global complex on 
$\Ccal\times_S M_G^{\bm\alpha,rs}$ (even though the global $\Pcal$ does not exist). 

On the other hand, we consider the sheaf of strongly parabolic relative Atiyah algebras 
${}^{spar}\operatorname{At}_{\Ccal\times_S M_G^{\bm\alpha,rs}/M_G^{\bm \alpha, rs}}(\Pcal)$ defined via the parabolic orbifold correspondence satisfying the short exact sequence
$$0 \rightarrow \Spar(\Pcal)\rightarrow {}^{spar}\operatorname{At}_{\Ccal\times_S M_G^{\bm\alpha,rs}/M_G^{\bm \alpha, rs}}(\Pcal)\rightarrow \mathcal{T}_{\Ccal\times_S M_G^{\bm\alpha,rs}/M_G^{\bm \alpha, rs}}(-D)\rightarrow 0,$$ where $\Spar(\Pcal)$ is the sheaf of strongly parabolic endomorphisms and $D$ is the divisor of marked points. Now consider the sheaf of quasi Lie algebras ${}^{spar}\widetilde{\operatorname{At}}_{\Ccal\times_S M_G^{\bm\alpha,rs}/M_G^{\bm \alpha, rs}}(\Pcal)$ obtained as the pull back of the following sequence 
$$0 \rightarrow {\Omega}_{\Ccal\times_S M_G^{\bm\alpha,rs}/M_G^{\bm \alpha, rs}}\rightarrow ({}^{spar}{\operatorname{At}}_{\Ccal\times_S M_G^{\bm\alpha,rs}/M_G^{\bm \alpha, rs}}(\Pcal)(D))^{\vee}\rightarrow(\Spar(\Pcal)(D))^{\vee} \rightarrow 0,$$
via the isomorphism of sheaf of parabolic endomorphisms $\Par(\Pcal)\cong (\Spar(\Pcal)(D))^{\vee}$. 
Moreover by construction $R^1\pi_{*} {}^{spar}\widetilde{\operatorname{At}}_{\Ccal\times_S M_G^{\bm\alpha,rs}/M_G^{\bm \alpha, rs}}(\Pcal)$ is a sheaf of relative Atiyah algebras on $M_G^{\bm \alpha,rs}$, where $\pi: \mathcal{C}\times_S M_G^{\bm \alpha ,rs}\rightarrow M_G^{\bm \alpha,rs}$ is the projection. We first show that 
$$R^1\pi_{*}({}^{par}\Sfrak^{\bullet}_{\mathcal{C}\times_S M_{G}^{\bm \alpha ,rs},\pi,S}(\Pcal))\simeq R^1\pi_\ast({}^{par}\Sfrak^{-1}_{\mathcal{C}\times_S M_{G}^{\bm \alpha ,rs}/M_G^{\bm \alpha, rs}}(\Pcal)) \simeq  R^1\pi_{*}( {}^{spar}\widetilde{\operatorname{At}}_{\Ccal\times_S M_G^{\bm\alpha,rs}/M_G^{\bm \alpha, rs}}(\Pcal))$$ 
We refere the reader to  Sections 3, 4 for more details. Finally we related it to relative Atiyah algebras of parabolic determinant of cohomologies.

Given a nontrivial holomorphic embedding $\phi\, :\, G\,\longrightarrow\, \SL_r$, there is an
associated determinant of cohomology line bundle $\Lcal_\phi\,\longrightarrow\,
M_G^{\bm\alpha,rs}$. Let $\At_{M_G^{\bm\alpha,rs}/S}(\Lcal_\phi)$
denote the relative  Atiyah algebra of $\Lcal_\phi$. Then the main result
of this paper is the following.

\begin{theorem} \label{thm:main}
On $M_G^{\bm\alpha,rs}$ there is a natural isomorphism of Atiyah algebras
$$
    \frac{1}{m_\phi}\At_{M_G^{\bm\alpha,rs}/S}(\Lcal_\phi)\,\simeq\,
    R^1\pi_\ast({}^{par}\Sfrak^{-1}_{\mathcal{C}\times_S M_{G}^{\bm \alpha ,rs}/M_G^{\bm \alpha, rs}}(\Pcal))\simeq R^1\pi_{*}( {}^{spar}\widetilde{\operatorname{At}}_{\Ccal\times_S M_G^{\bm\alpha,rs}/M_G^{\bm \alpha, rs}}(\Pcal)),
$$
where $m_\phi$ is the Dynkin index of the associated homomorphism
    $\phi_\ast\,:\, \gfrak\,\to\,\slfrak_r$ given by the ration of the normalized Killing forms. 
\end{theorem}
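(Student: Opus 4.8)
Since the second isomorphism in the statement is supplied by the results of Sections 3 and 4, I would concentrate on the first isomorphism
$$\frac{1}{m_\phi}\At_{M_G^{\bm\alpha,rs}/S}(\Lcal_\phi)\,\simeq\, R^1\pi_\ast\bigl({}^{par}\Sfrak^{-1}_{\Ccal\times_S M_G^{\bm\alpha,rs}/M_G^{\bm\alpha,rs}}(\Pcal)\bigr).$$
Both sides are relative Atiyah algebras, that is, extensions of $\Tcal_{M_G^{\bm\alpha,rs}/S}$ by $\Ocal$ carrying a Lie bracket: the left-hand side by definition, and the right-hand side by the construction recalled in the introduction. The plan is therefore to produce a morphism of such extensions restricting to the identity on the ideal $\Ocal$ and inducing the identity on the quotient $\Tcal_{M_G^{\bm\alpha,rs}/S}$; the five lemma then promotes it to an isomorphism, and I would arrange the construction so that compatibility with the brackets is automatic.

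To build this morphism I would reduce to the non-parabolic theorem of Ginzburg, in the form established by Bloch--Esnault, via the parabolic orbifold correspondence. Under this correspondence a parabolic $G$-bundle on $(\Ccal,D)$ is replaced by an equivariant $G$-bundle on an associated orbifold curve (a root stack along $D$), the local universal family $\Pcal$ by its equivariant avatar, and the parabolic Ginzburg complex ${}^{par}\Sfrak^{\bullet}(\Pcal)$ by the ordinary equivariant Ginzburg dgla. On the cover, the theorem of Ginzburg identifies the first direct image of the Ginzburg dgla with the Atiyah algebra of the determinant of cohomology of the vector bundle associated through $\phi$, the comparison being realized by the Knudsen--Mumford determinant whose infinitesimal variation is computed by the trace pairing underlying the complex. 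Since this determinant is precisely the orbifold incarnation of $\Lcal_\phi$, descending the identification along the correspondence yields the sought isomorphism of Atiyah algebras on $M_G^{\bm\alpha,rs}$.

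The Dynkin index enters through the bilinear forms. The line bundle $\Lcal_\phi$ is the determinant of cohomology of the rank-$r$ bundle associated to $\Pcal$ via $\phi\colon G\to\SL_r$, so by Grothendieck--Riemann--Roch its Atiyah class is governed by the trace form on $\slfrak_r$. By the definition of the Dynkin index, $\phi_\ast$ pulls this form back to $m_\phi$ times the normalized Killing form on $\gfrak$, whereas the Ginzburg complex ${}^{par}\Sfrak^{\bullet}(\Pcal)$ is built directly from the normalized form on $\gfrak$. The two classes therefore differ by the scalar $m_\phi$, which is exactly the source of the factor $1/m_\phi$ in the statement.

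The main obstacle is the behaviour at the marked divisor $D$ together with the fact that $\Pcal$ exists only locally on $\Ccal\times_S M_G^{\bm\alpha,rs}$. I would need to verify that the orbifold correspondence intertwines the trace/determinant construction with the Ginzburg complex after accounting for all the twists by $D$ and for the strongly-parabolic/parabolic duality $\Par(\Pcal)\cong(\Spar(\Pcal)(D))^\vee$, and that the resulting local comparison maps are independent of the choice of local universal bundle, so that they glue to a morphism defined over the whole moduli space and descend from the cover to $M_G^{\bm\alpha,rs}$. Ensuring that these twists and this descent are compatible with the Lie algebroid brackets, and not merely with the underlying extensions of sheaves, is where the essential difficulty lies.
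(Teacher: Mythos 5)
Your overall route is the paper's: reduce via the covering/orbifold correspondence (the paper uses a ramified Galois cover $\widehat{\Ccal}\to\Ccal$ with invariant pushforward $p_*^{\Gamma}$ in the sense of Seshadri--Biswas rather than a root stack, but these are equivalent), apply the non-parabolic chain Beilinson--Schechtman $+$ Bloch--Esnault $+$ the Ginzburg-complex comparison on the cover, and extract the factor $m_\phi$ from the discrepancy between $\nu_{\gfrak}$ and $\nu_{\slfrak_r}$, exactly as in Proposition \ref{prop:veryimportant} and the diagram \eqref{eqn:impdia}. Your identification of the strongly-parabolic/parabolic duality $\ParEnd(\Pcal)\cong(\SParEnd(\Pcal)(D))^{\vee}$ as the crux of the divisor bookkeeping is also on target: in the paper this is implemented by the invariant pushforward of the Beilinson--Schechtman residue pairing (Theorem \ref{thm:maindgla} and Proposition \ref{prop:paraoblicGdgla}), which simultaneously handles your worry about bracket compatibility, since the comparison maps are induced by pairings rather than chosen splittings.

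There is, however, one genuine gap: you write that ``descending the identification along the correspondence yields the sought isomorphism,'' but the non-parabolic theorem on the cover is only available over the locus where the associated $\SL_r$-bundle (respectively the underlying $G$-bundle) is \emph{stable}, and the map $\phi\colon M_G^{\bm\tau,ss}\to \widehat{M}^{ss}_{\SL_r}$ induced by the representation does not preserve stability. The paper therefore first establishes the isomorphism only over the open subset $Y_G^{\bm\tau,rs}=\phi^{-1}(\widehat{M}^{rs}_G)$, and then needs three separate inputs to conclude on all of $M_G^{\bm\alpha,rs}$: finiteness of $\phi$ together with Faltings' estimate and the genus bounds on the orbifold curve $[\widehat{C}/\Gamma]$ to ensure the complement of $Y_G^{\bm\tau,rs}$ has codimension at least two; local freeness (hence reflexivity) of both $\At_{M_G^{\bm\alpha,rs}/S}(\Lcal_\phi)$ and $R^1\pi_*({}^{par}\Sfrak^{-1}(\Pcal))$; and a Hartogs-type extension of the isomorphism across the bad locus (Theorem \ref{thm:atiyah-det-be} and Corollary \ref{cor:atiyah-det-ginzburg}). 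Without this step the argument fails precisely where the associated bundle degenerates, and no amount of care with the twists by $D$ or with gluing local universal bundles substitutes for it; note also that without the genus hypotheses the codimension-two claim, and hence the theorem in the form you argue it, is not available. A secondary omission: your plan implicitly needs the deformation-theoretic hypothesis that $\Tcal_{T/S}\to R^1\pi_*\gfrak_{\Pcal}$ is an isomorphism (statement (iii) of Lemma \ref{lem:obviousbutimp}), which holds for the universal family over the regularly stable locus and is what makes the hypercohomology collapse $R^0\pi_*\Sfrak^{\bullet}\simeq R^1\pi_*\Sfrak^{-1}$ of Proposition \ref{prop:old1} valid; this is the mechanism behind the second isomorphism you set aside, so it should at least be flagged.
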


In a recent paper \cite{BBMP20} the result of
Beilinson-Schechtman was used in an integral way to give an algebraic proof
of the existence of a flat projective connection (a \emph{Hitchin
connection}) on the bundle of
generalized theta functions for vector bundles on families of curves. One
of the main motivations of the present paper was to apply Theorem \ref{thm:main}
in the same way to obtain  a Hitchin connection for theta functions
associated to parabolic $G$-bundles. This is carried out in \cite{BMW1}.

\section{Quasi-Lie algebras and extensions of Atiyah algebras}
\label{sec:setup}

\subsection{Basic definitions} \label{sec:ginzburg}

In this section, we recall a correspondence stated in Ginzburg \cite{Ginzburg} between quasi-Lie algebras and their associated 
differential graded Lie algebras. We also recall from Beilinson-Schechtman \cite{BS88} a natural classes of Atiyah algebras associated 
to a family of curves.

\subsubsection{Quasi-Lie algebras} \label{sec:dgla}

First we recall the definition of a quasi-Lie algebra. Let
$\widetilde\gfrak$
be a vector space equipped with a skew-symmetric bilinear map $$[\ ,\
]\,:\, \widetilde\gfrak \times \widetilde\gfrak \,\longrightarrow\,
\widetilde\gfrak .$$ Let $Z\,\subset\, \widetilde{\mathfrak{g}}$ be a linear subspace.  

\begin{definition}
A triple $(\widetilde{\mathfrak{g}},\, Z,\, [ \ , \ ])$ as above is called a quasi-Lie algebra if the following holds: 
	\begin{enumerate}
		\item the subspace $Z$ is central with respect to  $[\ , \ ]$, and
		\item the bracket $[\ , \ ]$ descends 	to give a Lie algebra structure on $\widetilde{\mathfrak{g}}/Z$. 
	\end{enumerate}
\end{definition}
We will see that in the setting of Atiyah algebras, these quasi-Lie algebras arise naturally. 
Now recall the notion of a differential graded Lie algebra (dgla).

\begin{definition}
A \emph{differential graded Lie algebra} (dgla)
is a vector space $\mathfrak{S}\,:=\, \bigoplus_i \mathfrak{S}^i$ together with a bilinear map
$\{\mathfrak{S}^i,\mathfrak{S}^{j}\} \,\subset\, \mathfrak{S}^{i+j}$ and a differential
$d\,:\, \mathfrak{S}^{i}\,\longrightarrow\, \mathfrak{S}^{i+1}$ satisfying the following:
\begin{itemize}
\item $\{x,\,y\}\,=\,(-1)^{|x||y|+1}\{y,\,x\}$, where $|z|\,=\, i$ for $z\,\in\, \mathfrak{S}^i$,

\item $(-1)^{|x||z|}\{x,\,\{y,\,z\}\}+ (-1)^{|y||x|}\{y,\,\{z,\,x\}\}+
(-1)^{|y||z|}\{z,\,\{y,\,x\}\}\,=\,0$, and

\item $d\{x,\,y\}\,=\,\{dx,\,y\}+ (-1)^{|x|}\{x,\,dy\}$.
\end{itemize}
\end{definition}

A morphism of dglas
is a graded linear map $\mathfrak{S}\,\longrightarrow\, \mathfrak{S}'$ that preserves the Lie bracket and commutes with the differentials. 

The following lemma of  Ginzburg \cite[Lemma 7.7]{Ginzburg} gives a correspondence between quasi-Lie algebras and a certain class of dglas. 

\begin{lemma}\label{lem:ginzburg}
Let $(\widetilde{\mathfrak{g}},\, Z,\, [ \ , \ ])$ be a quasi-Lie
algebra equipped with a symmetric $Z$-valued, $Z$-invariant bilinear form $\langle \ ,\ \rangle \,:\, \operatorname{Sym}^2
\widetilde{\mathfrak{g}}\,\longrightarrow\, Z$ such that the following hold: 
\begin{itemize}
\item $\langle [x,\,y],\,z\rangle +\langle y,\, [x,\,z]\rangle\,=\,0$, and

\item $[x,\,[y,\,z]]+ [y,\,[z,\,x]]+ [z,\,[x,\,y]]\,=\,d(\langle [x,\,y],\,z\rangle )$.
\end{itemize}
Then there exists a dgla $\mathfrak{S}\,=\,\mathfrak{S}^{-2}\oplus
\mathfrak{S}^{-1}\oplus \mathfrak{S}^{0}$, where
$\Sfrak^0 := \gfrak$, 
$\mathfrak{S}^{-1}\,:=\,\widetilde{\mathfrak{g}}$,\, $\mathfrak{S}^{-2}\,:=\,Z$,
the differential is given by inclusion and quotient, 
and with the bracket given by the formula 
$\{x,\,y \}\,=\,\langle x,\,y\rangle$; and $\{x,\,dy\} \,=\,[x,\,y]+\langle x,\, y
\rangle$  for $x,\,y \,\in\, \widetilde{\mathfrak{g}}$. 
    Conversely given a dgla as above satisfying $\{\Sfrak^{-2},
    d\Sfrak^{-1}\}\subset \ker d$, 
    there exists a quasi-Lie algebra along with a symmetric $Z$-valued,
    $Z$-invariant bilinear form. 
\end{lemma}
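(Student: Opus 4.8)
The plan is to prove the two asserted directions of the correspondence separately, in each case by constructing the claimed object explicitly and then verifying the defining axioms. The conceptual point I want to extract along the way is that the two displayed hypotheses on $\langle\ ,\ \rangle$ are not ad hoc: they are precisely the identities forced by the two nontrivial graded Jacobi identities of a dgla concentrated in degrees $-2,-1,0$. So the whole lemma amounts to a careful, degree-by-degree translation between "quasi-Lie algebra plus invariant form" data and "three-term dgla" data.

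For the forward direction I would first fix the graded space $\mathfrak{S}=\mathfrak{S}^{-2}\oplus\mathfrak{S}^{-1}\oplus\mathfrak{S}^{0}$ with $\mathfrak{S}^{-2}=Z$, $\mathfrak{S}^{-1}=\widetilde{\mathfrak{g}}$, $\mathfrak{S}^{0}=\widetilde{\mathfrak{g}}/Z$, take $d$ to be the inclusion $Z\hookrightarrow\widetilde{\mathfrak{g}}$ in degree $-2$ and the quotient $\widetilde{\mathfrak{g}}\to\widetilde{\mathfrak{g}}/Z$ in degree $-1$, so that $d^{2}=0$ is immediate, and record the bracket on $\mathfrak{S}^{-1}\times\mathfrak{S}^{-1}$ by $\langle\ ,\ \rangle$ and on $\mathfrak{S}^{-1}\times\mathfrak{S}^{0}$ by $\{x,dy\}=[x,y]+\langle x,y\rangle$, the component $\mathfrak{S}^{0}\times\mathfrak{S}^{0}$ being the induced Lie bracket on $\widetilde{\mathfrak{g}}/Z$ and all brackets into $\mathfrak{S}^{<-2}=0$ being forced. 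The first routine point to settle is well-definedness: since $\{x,dy\}$ must depend only on $dy$, one needs $\langle x,z\rangle=0$ for $z\in Z$ together with centrality of $Z$; I would record that the form vanishes on $Z$ (this is part of the $Z$-valued, $Z$-invariant hypothesis and is exactly what is needed), and note that it simultaneously forces the $\mathfrak{S}^{0}$-action on $\mathfrak{S}^{-2}$ to be trivial. Next, graded antisymmetry is checked componentwise: on $\mathfrak{S}^{-1}\times\mathfrak{S}^{-1}$ the sign $(-1)^{|x||y|+1}$ is $+1$, so it reduces to symmetry of $\langle\ ,\ \rangle$, and the remaining components are automatic or simply define the opposite bracket. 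Finally the Leibniz rule $d\{x,y\}=\{dx,y\}+(-1)^{|x|}\{x,dy\}$ must be verified bidegree by bidegree; the case $x,y\in\mathfrak{S}^{-1}$ recovers the statement that $[\ ,\ ]$ descends to $\widetilde{\mathfrak{g}}/Z$ and also pins down the normalization relating $d|_{\mathfrak{S}^{-2}}$, the form, and the bracket, so any overall constants are absorbed here.

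The heart of the forward direction is the graded Jacobi identity, which I would break up by total degree. The degree-$0$ case is just the Jacobi identity of the Lie algebra $\widetilde{\mathfrak{g}}/Z$; the cases of total degree $-3$ and below are trivial since the relevant iterated brackets land in $0$. The degree-$(-2)$ case (two inputs in $\mathfrak{S}^{-1}$, one in $\mathfrak{S}^{0}$) collapses, after expanding the brackets and discarding the terms killed by $\langle\widetilde{\mathfrak{g}},Z\rangle=0$, to exactly the invariance identity $\langle[x,y],z\rangle+\langle y,[x,z]\rangle=0$. The degree-$(-1)$ case (one input in $\mathfrak{S}^{-1}$, two in $\mathfrak{S}^{0}$) collapses, after expanding $\{x,dy\}=[x,y]+\langle x,y\rangle$, to the second hypothesis $[x,[y,z]]+[y,[z,x]]+[z,[x,y]]=d\langle[x,y],z\rangle$, its $Z$-valued part being controlled by the invariance already established. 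Matching these two computations to the stated identities, keeping track of the paper's sign convention in the Jacobiator and in the graded antisymmetry rule, is the step I expect to be most delicate; the bookkeeping of signs (and of the normalization surfacing in the Leibniz check) is really the only place where care is required.

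For the converse I would reverse this dictionary: set $\widetilde{\mathfrak{g}}:=\mathfrak{S}^{-1}$ and $Z:=d(\mathfrak{S}^{-2})\subset\widetilde{\mathfrak{g}}$, define $\langle x,y\rangle:=\{x,y\}\in\mathfrak{S}^{-2}$, and define $[x,y]$ as $\{x,dy\}$ corrected by the appropriate multiple of $d\langle x,y\rangle$ fixed by the normalization from the forward direction, and then read the dgla axioms backwards. Graded antisymmetry on $\mathfrak{S}^{-1}\times\mathfrak{S}^{-1}$ gives symmetry of the form; the Leibniz rule yields centrality of $Z$ and the descent of $[\ ,\ ]$ to $\widetilde{\mathfrak{g}}/Z$; and the graded Jacobi identity in its three nontrivial total degrees yields, respectively, the Jacobi identity on $\widetilde{\mathfrak{g}}/Z$, the invariance of $\langle\ ,\ \rangle$, and the Jacobiator formula. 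The extra hypothesis $\{\mathfrak{S}^{-2},d\mathfrak{S}^{-1}\}\subset\ker d$ is invoked precisely here: when reconstructing the Jacobiator identity, which is an equation in $\widetilde{\mathfrak{g}}$, the degree-$(-2)$ correction terms coming from the $\mathfrak{S}^{0}$-action on $\mathfrak{S}^{-2}$ must be annihilated by $d$ so as not to appear in the final equation, and this is exactly what that condition guarantees. Once the degree-$(-2)$ and degree-$(-1)$ Jacobi computations are carried out carefully in both directions, the equivalence with the two hypotheses is forced and the two constructions are manifestly mutually inverse; the main obstacle, as above, is sign and normalization bookkeeping rather than any conceptual difficulty.
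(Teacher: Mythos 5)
The paper offers no proof of this lemma at all --- it is quoted from Ginzburg with a bare citation to \cite[Lemma 7.7]{Ginzburg} --- so your degree-by-degree verification is not an alternative to the paper's argument but the only argument on the table, and its architecture is right. I checked your two key reductions and they hold: for the complex $Z \stackrel{d}{\to} \widetilde{\mathfrak{g}} \stackrel{d}{\to} \widetilde{\mathfrak{g}}/Z$, the graded Jacobi identity with two inputs of degree $-1$ and one of degree $0$ collapses (using $\langle \widetilde{\mathfrak{g}}, Z\rangle = 0$, centrality of $Z$, and the triviality of the $\mathfrak{S}^0$-action on $\mathfrak{S}^{-2}$, which, as you note, is itself forced by Leibniz and injectivity of $d$ on $\mathfrak{S}^{-2}$) to $\langle x,[y,z]\rangle + \langle y,[x,z]\rangle = 0$, equivalent to the first bullet via symmetry; with one input of degree $-1$ and two of degree $0$ it collapses to $[x,[y,z]]+[y,[z,x]]+[z,[x,y]] = d\langle [x,y],z\rangle$, the second bullet. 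Your placement of the hypothesis $\{\mathfrak{S}^{-2}, d\mathfrak{S}^{-1}\}\subset \ker d$ in the converse is also essentially correct, though it enters even earlier than the Jacobiator: since $\{z',x\}\in\mathfrak{S}^{-3}=0$, Leibniz gives $\{x,dz'\} = -\{z',dx\}$, so centrality of $Z := d\mathfrak{S}^{-2}$ for the reconstructed bracket is precisely the condition $d\{z',dx\}=0$.

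The one place where your sketch defers rather than decides is exactly where a complete proof cannot: the normalization. As printed, the lemma's formulas are not self-consistent. With $\{x,y\} = \langle x,y\rangle$, $\{x,dy\} = [x,y]+\langle x,y\rangle$, and $d|_{\mathfrak{S}^{-2}}$ the inclusion, the Leibniz rule on $x,y\in\mathfrak{S}^{-1}$ reads $d\{x,y\} = \{dx,y\} - \{x,dy\} = \bigl([x,y]-\langle x,y\rangle\bigr) - \bigl([x,y]+\langle x,y\rangle\bigr) = -2\langle x,y\rangle$, i.e.\ $\langle x,y\rangle = -2\langle x,y\rangle$, which kills the form. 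The repair is the constant $-\tfrac12$: one must take $\{x,dy\} = [x,y]-\tfrac12\, d\langle x,y\rangle$ (equivalently rescale the degree $(-1,-1)$ bracket), and correspondingly define $[x,y] := \{x,dy\}+\tfrac12\, d\{x,y\}$ in the converse, which is then skew by the same Leibniz identity. So your remark that ``any overall constants are absorbed'' in the Leibniz check identifies the right spot, but a proof must actually carry out that computation and fix the constant --- with the statement's literal constants the forward construction fails. This is as much an erratum for the statement (note also that the paper's Jacobi axiom prints $\{z,\{y,x\}\}$ where the standard convention, which your degree computations implicitly use, requires $\{z,\{x,y\}\}$) as a gap in your argument, but it should be made explicit rather than left implicit.
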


\subsubsection{Dglas associated to families of curves}\label{sec:families}

Let $\pi\,:\,X\rightarrow T$ be a smooth morphism of relative dimension one parametrized by $T$;
the curves in this family are not assumed to be proper. The relative holomorphic tangent bundle $\mathcal{T}_{X/T}$
fits in the exact sequence of $\mathcal{O}_X$-modules 
$$0\,\lra\,\mathcal{T}_{X/T}\,\lra\,\mathcal{T}_X \,\stackrel{d\pi}{\lra}\,\pi^* \mathcal{T}_{T} \,\lra\, 0.$$
Let $\mathcal{T}_{X,\pi}\, \subset\, \mathcal{T}_X$ denote the subsheaf $d\pi^{-1}(\pi^{-1}\mathcal{T}_T)$. Clearly
the sheaf $\mathcal{T}_{X,\pi}$ has the structure of Lie algebra
with Lie bracket coming from that on $\mathcal{T}_X$ and there is an exact sequence of Lie algebras 
$$0\,\lra\,\mathcal{T}_{X/T}\,\lra\,\mathcal{T}_{X,\pi} \,\stackrel{d\pi}{\lra}\,
\pi^{-1}\mathcal{T}_{T}\,\lra\, 0.$$ 
Consider the dgla given by
$\mathcal{T}_{\pi}^{\bullet}\,=\,\bigoplus_i \mathcal{T}^i,$ where $\mathcal{T}^i$ is zero for $i\,\neq\, \{0,\,-1\}$,
$\mathcal{T}^{-1}\,:=\,\mathcal{T}_{X/T}$ and $\mathcal{T}^{0}\,:=\,\mathcal{T}_{X,\pi}$. 
This dgla $\mathcal{T}_{\pi}^{\bullet}$ carries a natural action of $\pi^{-1}\mathcal{O}_T$ and a map
\begin{equation}\label{z1}
\overline{\epsilon}\,:\, \mathcal{T}_{\pi}^{\bullet}\,\longrightarrow\, H^0(\mathcal{T}_{\pi}^{\bullet})
\,=\,\pi^{-1}(\mathcal{T}_{T})
\end{equation}
given by $d\pi$. The relative de Rham complex $\Omega^{\bullet}_{X/T}\,=\,(\mathcal{O}_X\rightarrow \omega_{X/T})$
with $V^0\,:=\,\mathcal{O}_X$ and $V^1\,:=\,\Omega_{X/T}$ is naturally a $dg$-module $V^{\bullet}\,:=\,V^0\oplus V^1$
for $\mathcal{T}^{\bullet}_{\pi}$ which is compatible with the $\pi^{-1}\mathcal{O}_T$ action on both sides. 

\subsubsection{Atiyah algebras as $R^0\pi_*$ of dglas} \label{sec:atiyah}

Let $\pi\,:\,X\,\longrightarrow\, T$ be a family of curves as before. We discuss the notion of $\pi$-algebras following Beilinson-Scechtman \cite{BS88} which are
quasi-isomorphic to extensions of a complex of Atiyah algebras by the de Rham complex $\Omega^{\bullet}_{X/T}$.

\begin{definition}[{\cite[\S~1.2.1]{BeilinsonSchechtman:88}}]
An $\mathcal{O}_S$-Lie algebra $\mathcal{A}^{\bullet}$ on $X$ is a dgla together with a $\pi^{-1}\mathcal{O}_S$-module structure
and a morphism $\epsilon_{\mathcal{A}}\,:\, \mathcal{A}^{\bullet}\,\longrightarrow\,\mathcal{T}^{\bullet}_{\pi}$ that satisfies 
the condition $[a,\,fb]\,=\,\overline{\epsilon}_{\mathcal{A}}(a)(f)b + f[a,b]$, where
$\overline{\epsilon}_{\mathcal{A}}\,=\,\overline{\epsilon}\circ \epsilon_{\mathcal{A}}$
(see \eqref{z1}). A $\pi$-algebra $\mathcal{A}^{\bullet}$ is an $\mathcal{O}_S$-Lie algebra together with a three term filtration 
$$
0\,=\,\mathcal{A}^{\bullet}_{-3} \,\subset\, \mathcal{A}^{\bullet}_{-2} \,\subset\, \mathcal{A}^{\bullet}_{-1}
\,\subset\, \mathcal{A}^{\bullet}_{0}\,=\,\mathcal{A}^{\bullet}
$$
such that the following hold:
\begin{enumerate}
\item $[\mathcal{A}^{\bullet}_i, \mathcal{A}^{\bullet}_j]\,\subset\, \mathcal{A}^{\bullet}_{i+j}$,\,
$\mathcal{O}_S \cdot\mathcal{A}^{\bullet}_i \,\subset\, \mathcal{A}^{\bullet}_i$,

\item $\mathcal{A}^{\bullet}_{-2}\,\simeq\, \Omega_{X/S}^{\bullet}[2]$ as $\mathcal{O}_S$-modules,

\item $\mathcal{A}^{\bullet}_{-1}/\mathcal{A}^{\bullet}_{-2}$ is acyclic,

\item $\epsilon_{\mathcal{A}}\,:\,\mathcal{A}^{\bullet}\,\longrightarrow\,
\mathcal{A}^{\bullet}/\mathcal{A}^{\bullet}_{-1}\,\simeq\, \mathcal{T}^{\bullet}_{\pi}$, and

\item the ad action of $\mathcal{A}^{\bullet}/\mathcal{A}^{\bullet}_{-1}$ on
$\mathcal{A}^{\bullet}_{-2}$ coincides with the $ \mathcal{T}^{\bullet}_{\pi}$-action on $\Omega^{\bullet}_{X/T}[2]$. 
\end{enumerate}
\end{definition}

By the above definition of a $\pi$-algebra, we get an exact sequence 
\begin{equation}
0\,\lra\,\mathcal{A}^{\bullet}_{-2}\,\lra\,\mathcal{A}^{\bullet}
\,\lra\,\mathcal{A}^{\bullet}/\mathcal{A}^{\bullet}_{-2}\,\lra\,0.
\end{equation}
This will be called an $\Omega$-extension of $\mathcal{A}^{\bullet}/\mathcal{A}^{\bullet}_{-2}$. Now assume
that the map $\pi$ is proper. Suppose we are given any $\pi$-algebra $\mathcal{A}^{\bullet}$ fitting
in an exact sequence of complexes
\begin{equation}\label{e1}
0 \,\lra\,\Omega_{X/T}^\bullet[2]\,\lra\,\mathcal{A}^{\bullet}
\,\lra\,\mathcal{A}^{\bullet}/\mathcal{A}^{\bullet}_{-2}\,\lra\, 0.
\end{equation}

\begin{proposition}[{\cite[\S~1.2.3]{BeilinsonSchechtman:88}}]
The short exact sequence
$$0\,\lra\,\mathcal{O}_T \,\lra\, R^0\pi_{*}\mathcal{A}^{\bullet}\,\lra\,\mathcal{T}_T
\,\lra\, 0$$ 
defines an Atiyah algebra on $T$.
\end{proposition}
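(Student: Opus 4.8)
The plan is to apply the derived pushforward $R\pi_\ast$ to the defining exact sequence \eqref{e1} and to read off the asserted extension from the associated long exact sequence of hyperdirect image sheaves, after which the Lie algebroid structure is transported from the dgla structure on $\mathcal{A}^\bullet$. First I would identify the three relevant hyperdirect images. The quotient $\mathcal{A}^\bullet/\mathcal{A}^\bullet_{-2}$ maps to $\mathcal{A}^\bullet/\mathcal{A}^\bullet_{-1}\simeq\mathcal{T}^\bullet_\pi$, and since $\mathcal{A}^\bullet_{-1}/\mathcal{A}^\bullet_{-2}$ is acyclic (axiom (iii)) this map is a quasi-isomorphism, so $R\pi_\ast(\mathcal{A}^\bullet/\mathcal{A}^\bullet_{-2})\simeq R\pi_\ast\mathcal{T}^\bullet_\pi$. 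The complex $\mathcal{T}^\bullet_\pi=(\mathcal{T}_{X/T}\to\mathcal{T}_{X,\pi})$ is quasi-isomorphic to its only nonzero cohomology sheaf $\pi^{-1}\mathcal{T}_T$ placed in degree $0$ (by the exact sequence defining $\mathcal{T}_{X,\pi}$), so for $\pi$ proper with connected one-dimensional fibres one gets $R^0\pi_\ast\mathcal{T}^\bullet_\pi\simeq\mathcal{T}_T$ and $R^{-1}\pi_\ast\mathcal{T}^\bullet_\pi=0$. On the other side, $\mathcal{A}^\bullet_{-2}\simeq\Omega^\bullet_{X/T}[2]$ is the shifted relative de Rham complex, whose hyperdirect images are the relative de Rham cohomology sheaves $R^j\pi_\ast(\Omega^\bullet_{X/T}[2])=\mathbb{H}^{j+2}_{\mathrm{dR}}(X/T)$; for a family of smooth proper curves these vanish once $j+2>2$, while $\mathbb{H}^2_{\mathrm{dR}}(X/T)\simeq\mathcal{O}_T$ canonically via the trace (fundamental class) map.

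Assembling these into the long exact sequence obtained from \eqref{e1}, the terms flanking $R^0\pi_\ast\mathcal{A}^\bullet$ are $R^{-1}\pi_\ast\mathcal{T}^\bullet_\pi=0$ on the left and $R^1\pi_\ast(\Omega^\bullet_{X/T}[2])=\mathbb{H}^3_{\mathrm{dR}}(X/T)=0$ on the right, so the sequence collapses to
$$0\,\lra\,\mathcal{O}_T\,\lra\,R^0\pi_\ast\mathcal{A}^\bullet\,\lra\,\mathcal{T}_T\,\lra\,0,$$
which is the claimed extension, the quotient map being $R^0\pi_\ast$ of the symbol $\epsilon_{\mathcal{A}}$ under the identifications above. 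It then remains to produce the Lie algebra structure and verify the Atiyah algebra axioms. The dgla bracket $\{\ ,\ \}\colon\mathcal{A}^\bullet\otimes\mathcal{A}^\bullet\to\mathcal{A}^\bullet$ is a morphism of complexes (this is exactly the compatibility $d\{x,y\}=\{dx,y\}\pm\{x,dy\}$), so via the cup product on hyperdirect images it induces a bracket on $R^0\pi_\ast\mathcal{A}^\bullet$; because degree-$0$ elements carry trivial Koszul signs, the graded skew-symmetry and graded Jacobi identity of the dgla become the ordinary ones. The $\pi^{-1}\mathcal{O}_T$-module structure on $\mathcal{A}^\bullet$ makes $R^0\pi_\ast\mathcal{A}^\bullet$ an $\mathcal{O}_T$-module, and the Leibniz relation $[a,fb]=\overline{\epsilon}_{\mathcal{A}}(a)(f)b+f[a,b]$ of the $\mathcal{O}_T$-Lie algebra structure pushes forward to the anchor identity, with anchor the quotient map $R^0\pi_\ast\mathcal{A}^\bullet\to\mathcal{T}_T$.

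The remaining, and I expect most delicate, point is centrality of the subsheaf $\mathcal{O}_T$. By axiom (i) the bracket of any section of $R^0\pi_\ast\mathcal{A}^\bullet$ with a section coming from $R^0\pi_\ast\mathcal{A}^\bullet_{-2}=\mathcal{O}_T$ again lands in $R^0\pi_\ast\mathcal{A}^\bullet_{-2}$, and axiom (v) identifies this adjoint action with the natural action of $\mathcal{T}^\bullet_\pi$ on $\Omega^\bullet_{X/T}[2]$. I would conclude by showing that the induced action of $\mathcal{T}_T$ on $\mathbb{H}^2_{\mathrm{dR}}(X/T)\simeq\mathcal{O}_T$ is trivial: this is precisely the assertion that the fundamental class trivializing $\mathbb{H}^2_{\mathrm{dR}}(X/T)$ is horizontal for the Gauss--Manin connection, equivalently that the trace map is flat. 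Granting this, $[a,c]=0$ for every section $a$ and every central section $c$, so $\mathcal{O}_T$ is central and the extension is an Atiyah algebra. The hard part is therefore to make the comparison in axiom (v) explicit enough at the level of hypercohomology to exhibit this horizontality, rather than the essentially formal homological bookkeeping of the preceding steps.
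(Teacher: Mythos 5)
The paper offers no in-text proof of this proposition: it is quoted verbatim from \cite[\S~1.2.3]{BeilinsonSchechtman:88}, so your attempt can only be measured against the standard Beilinson--Schechtman argument. Your homological skeleton reproduces it faithfully and correctly: the quotient $\mathcal{A}^{\bullet}/\mathcal{A}^{\bullet}_{-2}\to\mathcal{A}^{\bullet}/\mathcal{A}^{\bullet}_{-1}\simeq\mathcal{T}^{\bullet}_{\pi}$ is a quasi-isomorphism by axiom (iii); $\mathcal{T}^{\bullet}_{\pi}$ is quasi-isomorphic to $\pi^{-1}\mathcal{T}_T$ in degree $0$, whence $R^{-1}\pi_*\mathcal{T}^{\bullet}_{\pi}=0$ and $R^{0}\pi_*\mathcal{T}^{\bullet}_{\pi}\simeq\mathcal{T}_T$ for proper $\pi$ with connected fibres; and $R^{0}\pi_*(\Omega^{\bullet}_{X/T}[2])=\mathbb{H}^{2}_{\mathrm{dR}}(X/T)\simeq\mathcal{O}_T$ via the trace while $\mathbb{H}^{3}_{\mathrm{dR}}(X/T)=0$ in relative dimension one, so the long exact sequence attached to \eqref{e1} collapses to the stated extension. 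The transport of the bracket through the cup product and of the Leibniz identity through $\overline{\epsilon}_{\mathcal{A}}$ is likewise the intended mechanism.

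The one genuine misstep is your final claim that $\mathcal{O}_T$ is \emph{central}. In an Atiyah algebra the subsheaf $\mathcal{O}_T$ is an abelian ideal, not a central one: the axiom to verify is $[a,f]=\sigma(a)(f)$ for $f\in\mathcal{O}_T$, where $\sigma$ denotes the projection $R^{0}\pi_*\mathcal{A}^{\bullet}\to\mathcal{T}_T$; equivalently, that the unit section $1\in\mathcal{O}_T$ is central. Full centrality of $\mathcal{O}_T$ would in fact contradict the Leibniz identity you yourself push forward, since $[a,f\cdot 1]=\sigma(a)(f)\cdot 1+f[a,1]$, which is nonzero whenever $\sigma(a)(f)\neq 0$. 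Relatedly, ``the induced action of $\mathcal{T}_T$ on $\mathbb{H}^{2}_{\mathrm{dR}}(X/T)$ is trivial'' does not typecheck as stated: by axiom (v) that adjoint action is the Gauss--Manin connection, a first-order operator rather than an $\mathcal{O}_T$-linear one, so it cannot literally vanish. What your horizontality argument actually (and correctly) proves is that under the trace trivialization $\mathbb{H}^{2}_{\mathrm{dR}}(X/T)\simeq\mathcal{O}_T$ the Gauss--Manin connection becomes the trivial connection $d$, i.e.\ $[a,1]=0$; combined with the Leibniz rule this yields precisely $[a,f]=\sigma(a)(f)$, which is the Atiyah-algebra axiom. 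So the ingredient you isolate as the delicate point --- flatness of the fundamental class --- is exactly the right one; only the conclusion drawn from it needs to be restated as above for the proof to close.
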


\subsubsection{$\pi$-algebras associated to Atiyah algebras} \label{sec:pi}

Let $\pi\,:\, X\, \longrightarrow\, T$ be a family of curves that are not necessarily projective, and let $\mathcal{A}$ be an
$R$-Atiyah algebra on $X$. There is a natural $\pi$-algebra associated to $\mathcal{A}$. Consider the
$\mathcal{O}_T$-Lie algebra $\mathcal{A}^{\bullet}_{\pi}$ defined by: 
$\mathcal{A}_{\pi}^{-1}\,:=\,\epsilon_{\mathcal{A}}^{-1} \mathcal{T}_{X/T}$ and
$\mathcal{A}^0_{\pi}\,:=\,\epsilon_{\mathcal{A}}^{-1} \mathcal{T}_{X,\pi}$.
There is a canonical surjective map $\epsilon_{\mathcal{A}}\,:\, \mathcal{A}_{\pi}^{\bullet}\,\longrightarrow
\, \mathcal{T}_{\pi}^{\bullet}$ whose kernel is $\operatorname{Cone}\operatorname{id}_{R}.$ 

\begin{definition}
A $\Omega$-extension ${}^{\#}\mathcal{A}^{\bullet}$ of $(\mathcal{A},\,R)$ is a $\Omega$ extension of
$\mathcal{A}_{\pi}^{\bullet}$ together with an $\mathcal{O}_X$-module structure on ${}^{\#}\mathcal{A}^{-1}$ such that 
\begin{itemize}
\item the $\mathcal{O}_X$-action is compatible with the action on $\mathcal{A}_{\pi}^{-1}$, and

\item the component $[\ ,]_{-1,-1}\,:\, {}^{\#}\mathcal{A}^{-1}\otimes {}^{\#}\mathcal{A}^{-1}\,\longrightarrow\,
{}^{\#}\mathcal{A}^{-2}\,=\,\mathcal{O}_X$ is a differential operator along the fibers. 
\end{itemize}
\end{definition}
There is a commutative diagram 
\begin{equation}\label{eqn:defdiag}
\begin{tikzcd}
& 0\ar[d] &&&\\
0\ar[r]& \mathcal{O}_X \ar[r, equal]\ar[d] & {}^{\#}\mathcal{A}^{-2}\ar[d]\ar[r] &0\ar[d]&\\
0 \ar[r] & \Omega_{X/T} \ar[d]\ar[r] & {}^{\#}\mathcal{A}^{-1} \ar[r]\ar[d]    &\mathcal{A}_{\pi}^{-1}:=\mathcal{A}_{X/T}\ar[r]\ar[d]&0 \\
&0 \ar[r] & {}^{\#}\mathcal{A}^{0}\ar[r,equal ] &\mathcal{A}_{\pi}^{0}:=\mathcal{A}_{X,\pi} \ar[r] & 0.
\end{tikzcd}
\end{equation}
Observe that the $\Omega_{X/T}$-extension is, by definition, a $\pi$-algebra, 
where the filtration ${}^{\#}\mathcal{A}^{\bullet}_{-2}$ is given by $\Omega_{X/T}^{\bullet}[2]$.

Later will also need to vary $T$ with respect to $S$ and consider $\mathcal{T}_{\pi,S}^{\bullet}$ where degree $-1$ term is same as $\mathcal{T}_{\pi}^{\bullet}$ and the degree zero term is $\mathcal{T}_{X,\pi,S}\,:=\,d\pi^{-1}(\pi^{-1}(\mathcal{T}_{T/S}))$. In the relative set-up one similarly defines $\mathcal{A}_{X,\pi,S}^{\bullet}$ by modifying the zero-th term to be $\mathcal{A}_{X,\pi,S}^{-0}:=\epsilon_{\mathcal{A}}^{-1}\mathcal{T}_{X,\pi,S}$. The resulting pushforward $R^0\pi_{*}\mathcal{A}_{X,\pi,S}^{\bullet}$ is a relative Atiyah algebra satisfying the fundamental exact sequence
$$0\rightarrow \mathcal{O}_T \rightarrow R^0\pi_*(\mathcal{A}_{X,\pi,S}^{\bullet})\rightarrow \mathcal{T}_{T/S}\rightarrow 0$$

\subsection{Principal bundles}\label{sec:principal}

\subsubsection{The Ginzburg complex} \label{sec:g-complex}

We continue with the earlier notation.
Let $G$ be a complex simple Lie group with Lie algebra $\gfrak$. We will denote by $\kappa_{\gfrak}$ the normalized
Cartan-Killing form on $\gfrak$ and consider the corresponding isomorphism
\begin{equation}\label{e2}
\nu_{\gfrak}\,:\, \gfrak \,\xrightarrow{\ \simeq\ }\,\gfrak^{\vee}
\end{equation}

Let $\Pi \,:\, P\,\longrightarrow\, X$ be a holomorphic principal $G$ bundle; we use the
convention that $G$ acts on the right of $P$. Automorphisms $\Aut(P)\vert_U$ of $P$ over
$U\,\subset\, X$ are by definition $G$-equivariant automorphisms of
$\Pi^{-1}(U)$, i.e.\ $F\,:\,P\vert_U\,\longrightarrow\, P\vert_U$ satisfying $F(pg)\,=\,F(p)g$ for all $g\,\in\, G$;
we do not assume that $\Pi\circ F\,=\, \Pi$. The group of automorphisms $\Aut(P)\vert_U$ is generated by the
invariant vector fields
$$
\aut_U(P)\,:=\,\left\{Y\,\in\, \Gamma(\Pi^{-1}(U),\, TP)\mid (R_g)_\ast
Y\,=\,Y\, ,\ \ \forall\,\  g\,\in\, G\right\}.
$$ 
Then $\aut(P)$ defines a coherent sheaf of $\Ocal_X$-modules.
We refer to the subsheaf $\At_{X/T}(P)\, \subset\, \aut(P)$ that projects by $d\Pi$ to
$\Tcal_{X/T}\, \subset\, TX$ as the \emph{relative Atiyah algebra of $P$}. 
We have an exact sequence
\begin{equation}\label{eqn:atiyahG}
0\,\lra\, \ad(P)\,\lra\, \At_{X/T}(P)\,\lra\, \Tcal_{X/T} \,\lra\, 0.
\end{equation}
We will explain the inclusion map on the left. Recall that a section of
$\ad(P)$ is identified with a function $f\,:\,P\,\longrightarrow\, \gfrak$
satisfying $f(pg)\,=\,\Ad_{g^{-1}}f(p)$. For $Y\,\in\,\gfrak$, let
$Y^\sharp$ denote the fundamental vector field on $P$
generated by $Y$. Then $Y^\sharp(pg)\,=\,(R_g)_\ast(\Ad_g Y)^\sharp(p)$. 
The map $\ad(P)\,\longrightarrow \,\aut(P)$ in \eqref{eqn:atiyahG} is
$f\,\longmapsto\, Y$, where $Y(p)\,=\,f(p)^\sharp$. With this definition,
$$
Y(pg)\,=\,(\Ad_{g^{-1}}f(p))^\sharp(pg)\,=\,(R_g)_\ast
(f(p))^\sharp\,=\,Y(p),
$$
so $Y$ is invariant and lies in $\aut(P)$ as the kernel of
$d\Pi$. The following will be important when we investigate universal bundles.

\begin{remark} \label{rem:adjoint}
Let $Z(G)$ denote the (finite) center of $G$, and let $\overline P\,:=\,P/Z(G)$ 
the associated principal bundle for the adjoint group $\overline G\,:=\, G/Z(G)$. 
Then there are canonical isomorphisms $\ad(\overline P)\,\simeq\,
\ad(P)$ and $\At_{X/T}(\overline P)\,\simeq \,\At_{X/T}(P)$.
\end{remark}

Dualizing \eqref{eqn:atiyahG} gives a quasi-Lie algebra structure on $\At_{X/T}(P)^{\vee}$: 
\begin{equation} \label{eqn:dualatiyahG}
0 \,\lra\, \Omega_{X/T} \,\lra\,\At_{X/T}(P)^{\vee} \,\stackrel{j}{\lra}\,\ad(P)^{\vee} \,\lra\, 0.
\end{equation}
Identify $\ad(P)$ with $\ad(P)^{\vee}$ using $\nu_{\gfrak}$ in \eqref{e2}, and denote
$\At_{X/T}(P)^{\vee}$ by $\widetilde{ \mathfrak{g}}_P$.
We have the following quasi-Lie algebra
\begin{equation}
\begin{tikzcd}
0\arrow[r] & \Omega_{X/T}\arrow[r]\arrow[d,equal] & \At_{X/T}(P)^{\vee} \arrow[r]& \ad(P)^{\vee} \arrow[r]&0 \\
0 \arrow[r]& \Omega_{X/T} \arrow[r] & \widetilde{ \mathfrak{g}}_P \arrow[u]\arrow[r]& \ad(P)\arrow[u, "\cong", "\nu_{\gfrak}"'] \arrow[r]\arrow[r]&0.
\end{tikzcd}
\end{equation}
As in Lemma \ref{lem:ginzburg}, associated to $\widetilde\gfrak_P$ is a dgla,
$\Sfrak^i_{X/T}(P)$, which we call the \emph{Ginzburg complex} for $P$.  
Explicitly, 
$$
\Sfrak^i_{X/T}(P)=\begin{cases} 0 & i\neq -2, -1, 0 \\
\Ocal_X & i=-2 \\
\widetilde\gfrak_P & i=-1 \\
\gfrak_P& i=0.\end{cases}
$$
We will later consider a relative version of it where $T$ varies.

\subsubsection{The Bloch-Esnault complex}\label{sec:be-complex}

Let $\Ecal\,\longrightarrow\, X$ be a holomorphic vector bundle. 
Like the Ginzburg complex, the Bloch-Esnault complex $\Bcal_{X/T}^\bullet(\Ecal)$, \cite{BE},
is nonzero in degrees $-2, -1$, and $0$, with $\Bcal_{X/T}^{-2}(\Ecal)\,=\,\Ocal_X$.
To define the other terms, let $X\,\simeq\, \Delta\,\subset\, X\times_T X$ be the relative diagonal, and   
let $\At_{X/T}(\Ecal)$ denote the relative Atiyah algebra of $X\,\longrightarrow\, T$. 
Then $\Bcal_{X/T}^0(\Ecal)\,=\,\operatorname{End}(\mathcal{E})$.
Set $\Ecal'\,:=\,\Ecal\otimes \Omega_{X/T}$, and define
$$\widetilde\Bcal_{X/T}^{-1}(\Ecal)\,=\,\frac{ \Ecal\boxtimes \Ecal'
(\Delta)}{\Ecal\boxtimes \Ecal' (-\Delta)}
$$
on $X\times_T X$. 
Note that $\Ocal_{X\times_T X}(\Delta)\big\vert_\Delta\,=\,\Tcal_{X/T}$. 
Then $\Bcal^{-1}_{X/T}(\Ecal)$ is defined by pushing out with the trace:
$$
\begin{tikzcd}
0 \arrow{r} &
\displaystyle
\frac{ \Ecal\boxtimes \Ecal'
}{\Ecal\boxtimes \Ecal' (-\Delta)}
\arrow{r} \arrow[d, equal]
&
\displaystyle
\frac{ \Ecal\boxtimes \Ecal'
	(\Delta)}{\Ecal\boxtimes \Ecal' (-\Delta)}
\arrow{r} \arrow[d, equal]
&
\displaystyle
\frac{ \Ecal\boxtimes \Ecal'
	(\Delta)}{\Ecal\boxtimes \Ecal'}
\arrow{r} \arrow[d, equal]& 0 
\\
0 \arrow{r} &  \End(\Ecal)\otimes \Omega_{X/T} \arrow{r} \arrow["\tr"]{d} &
\widetilde\Bcal^{-1}_{X/T}(\Ecal)  \arrow{r} \arrow{d}&
\End(\Ecal)\arrow{r}\arrow[equal]{d} & 0 \\
0 \arrow{r} &  \Omega_{X/T}\arrow{r} & \Bcal^{-1}_{X/T}(\Ecal) \arrow{r}
&
\End(\Ecal) \arrow{r}& 0.
\end{tikzcd}
$$
Here $\tr$ denote the trace $\operatorname{End}(\Ecal)\,\longrightarrow\, \mathcal{O}_X$ of endomorphisms.

We will actually need the traceless version $\Bcal_{0,X/T}^\bullet(\Ecal)$, where
$\Bcal_{0,X/T}^{-2}(\Ecal)\,=\,\Bcal^{-2}_{X/T}(\Ecal)$ while $\Bcal_{0,X/T}^0(\Ecal)$ is defined by
the inclusion map into the traceless relative Atiyah algebra 
and $\Bcal_0^{-1}(\Ecal)$ is defined by pulling back the extension
$\Bcal^{-1}_{X/T}(\Ecal)$ over the sheaf $\End_0(\Ecal)$ of traceless endomorphisms.

\subsubsection{Associated bundles} \label{sec:associated}

For any simple Lie algebra $\gfrak$, recall $\nu_{\gfrak}$ in \eqref{e2}.

\begin{lemma}\label{lemma:morningfirst}
Let $\phi_\ast\,:\, \gfrak\,\longrightarrow\, \sfrak$ be
a nonzero homomorphism of simple Lie algebras.
Consider the linear map $\underline\psi\,:\, \sfrak \,\longrightarrow\, \gfrak$ 
given by the following diagram:
\begin{equation}
\begin{tikzcd}
&	\sfrak^\vee \arrow[r, "\phi_\ast^t"] &
\gfrak^\vee\arrow[d,"\cong", "\nu_\gfrak^{-1}"']&\\
\gfrak\arrow[r,"\phi_\ast"]
&	\sfrak \arrow[r,"\underline{\psi}"]\arrow[u,
"\cong","\nu_\sfrak"']& \gfrak. 
\end{tikzcd}
\end{equation}
Then $\underline{\psi}\circ \phi_\ast\,=\,m_{\phi}
\operatorname{Id}_{\gfrak}$, where as mentioned in the Introduction, $m_{\phi}$ is the Dynkin index
of $\phi_\ast$. Moreover, replacing $\underline{\psi}$ by
$\psi\,:=\,(m_{\phi})^{-1}\underline{\psi}$,
$${\psi}\circ \phi_\ast\,=\,\operatorname{Id}_{\gfrak} .$$
\end{lemma}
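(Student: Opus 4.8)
The plan is to reduce the entire statement to the defining property of the Dynkin index, namely that $\phi_\ast^\ast \kappa_\sfrak \,=\, m_\phi\, \kappa_\gfrak$ as invariant symmetric forms on $\gfrak$, and then to chase the diagram. First I would record why this relation holds and, crucially, why $m_\phi \neq 0$. Since $\gfrak$ is simple and $\phi_\ast$ is a nonzero Lie algebra homomorphism, its kernel is a proper ideal and hence trivial, so $\phi_\ast$ is injective and $\phi_\ast(\gfrak)$ is a simple subalgebra of $\sfrak$. Pulling back the invariant form $\kappa_\sfrak$ along $\phi_\ast$ produces an invariant symmetric bilinear form on $\gfrak$, which by simplicity must be a scalar multiple of $\kappa_\gfrak$; that scalar is by definition the Dynkin index $m_\phi$. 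Because $\kappa_\sfrak$ restricts nondegenerately to the semisimple subalgebra $\phi_\ast(\gfrak)$, the pulled-back form is nonzero, so $m_\phi \neq 0$ (in fact it is a positive integer).

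The core is a direct computation with the two duality isomorphisms and the transpose, using their characterizing identities: $\nu_\gfrak(X)(Y) \,=\, \kappa_\gfrak(X,Y)$, $\nu_\sfrak(A)(B) \,=\, \kappa_\sfrak(A,B)$, and $\phi_\ast^t(\xi)(X) \,=\, \xi(\phi_\ast(X))$. Reading off the diagram gives $\underline{\psi} \,=\, \nu_\gfrak^{-1} \circ \phi_\ast^t \circ \nu_\sfrak$. To identify $\underline{\psi}\circ\phi_\ast$ I would evaluate $\underline{\psi}(\phi_\ast(X))$ by pairing it against an arbitrary $Y \in \gfrak$ through $\kappa_\gfrak$. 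By the characterization of $\nu_\gfrak^{-1}$, this pairing equals $\bigl(\phi_\ast^t\, \nu_\sfrak(\phi_\ast(X))\bigr)(Y)$, which unwinds by the definition of the transpose to $\nu_\sfrak(\phi_\ast(X))(\phi_\ast(Y)) \,=\, \kappa_\sfrak(\phi_\ast(X),\,\phi_\ast(Y))$.

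Applying the Dynkin-index relation converts the right-hand side to $m_\phi\, \kappa_\gfrak(X,Y)$, so that $\kappa_\gfrak\bigl(\underline{\psi}(\phi_\ast(X)),\, Y\bigr) \,=\, m_\phi\, \kappa_\gfrak(X,Y)$ for every $Y \in \gfrak$. Nondegeneracy of $\kappa_\gfrak$ then forces $\underline{\psi}(\phi_\ast(X)) \,=\, m_\phi X$, i.e.\ $\underline{\psi}\circ\phi_\ast \,=\, m_\phi \operatorname{Id}_{\gfrak}$. The final assertion is immediate: since $m_\phi \neq 0$, setting $\psi \,:=\, (m_\phi)^{-1}\underline{\psi}$ yields $\psi\circ\phi_\ast \,=\, \operatorname{Id}_{\gfrak}$.

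I do not expect a genuine obstacle; the argument is essentially bookkeeping with transposes and the metric identifications. The one point that demands care is interpreting the Dynkin index exactly as the ratio of the \emph{normalized} Killing forms (as opposed to raw Killing forms or a trace form in a chosen representation), so that the scalar produced by the diagram chase coincides with $m_\phi$ on the nose. The nonvanishing of $m_\phi$, needed only for the final normalization, is the single place where the simplicity hypotheses on $\gfrak$ and $\sfrak$ are genuinely used.
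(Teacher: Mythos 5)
Your proof is correct and is precisely the direct calculation the paper invokes: the paper's proof of this lemma is the one-line remark ``this follows from a direct calculation and the definition of Dynkin index,'' and your diagram chase via $\kappa_\gfrak\bigl(\underline{\psi}(\phi_\ast(X)),Y\bigr)=\kappa_\sfrak(\phi_\ast(X),\phi_\ast(Y))=m_\phi\,\kappa_\gfrak(X,Y)$ supplies exactly those omitted details, including the needed observations that $\phi_\ast$ is injective and $m_\phi\neq 0$.
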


\begin{proof}
This follows form a direct calculation and the definition of Dynkin index.
\end{proof}

The following lemma is straightforward.

\begin{lemma}
Let ${\psi}\,=\,\frac{1}{m_{\phi}}(\nu_\gfrak^{-1}\circ\phi_*^t
\circ \nu_\sfrak)$ be as in Lemma \ref{lemma:morningfirst}, and consider the map 
$\varphi\,:\,\gfrak \,\longrightarrow\, \sfrak$ 
defined by $\nu_{\mathfrak{s}}^{-1}\circ \psi^{t}\circ \nu_{\gfrak}$:
\begin{equation}
\begin{tikzcd}
\gfrak\arrow[r, "\nu_\gfrak" ]\arrow[rrr, bend right=35,
"\varphi", swap ]
&  \gfrak^\vee
\arrow[r,"{\psi}^t"] &\sfrak^\vee \arrow[r, "\nu_\sfrak^{-1}" 
]& \sfrak.
\end{tikzcd}
\end{equation} 
Then $\varphi\,=\,\frac{1}{m_{\phi}}\phi_\ast$.
\end{lemma}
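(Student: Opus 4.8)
The plan is to treat this as a purely formal identity in linear algebra, using only the contravariance of the transpose and the symmetry of the two normalized Killing forms. First I would record the two structural facts that drive the computation: since $\kappa_\gfrak$ and $\kappa_\sfrak$ are symmetric, the induced isomorphisms satisfy $\nu_\gfrak^t=\nu_\gfrak$ and $\nu_\sfrak^t=\nu_\sfrak$ under the canonical identifications $\gfrak\cong\gfrak^{\vee\vee}$ and $\sfrak\cong\sfrak^{\vee\vee}$; and the transpose is contravariant, $(A\circ B)^t=B^t\circ A^t$, with $(\phi_\ast^t)^t=\phi_\ast$.

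Next I would expand $\psi^t$ directly from the definition $\psi=\tfrac{1}{m_\phi}(\nu_\gfrak^{-1}\circ\phi_\ast^t\circ\nu_\sfrak)$. Applying contravariance gives $\psi^t=\tfrac{1}{m_\phi}\,\nu_\sfrak^t\circ(\phi_\ast^t)^t\circ(\nu_\gfrak^{-1})^t$, and then the symmetry relations together with $(\nu_\gfrak^{-1})^t=(\nu_\gfrak^t)^{-1}=\nu_\gfrak^{-1}$ reduce this to $\psi^t=\tfrac{1}{m_\phi}\,\nu_\sfrak\circ\phi_\ast\circ\nu_\gfrak^{-1}$.

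Finally I would substitute into the definition of $\varphi$. Since $\varphi=\nu_\sfrak^{-1}\circ\psi^t\circ\nu_\gfrak$, the previous step yields
$$
\varphi=\nu_\sfrak^{-1}\circ\Big(\tfrac{1}{m_\phi}\,\nu_\sfrak\circ\phi_\ast\circ\nu_\gfrak^{-1}\Big)\circ\nu_\gfrak=\tfrac{1}{m_\phi}\,\phi_\ast,
$$
where the outer $\nu_\sfrak^{-1}\circ\nu_\sfrak$ and the inner $\nu_\gfrak^{-1}\circ\nu_\gfrak$ cancel. There is essentially no obstacle here; the only point requiring care is the bookkeeping of the canonical isomorphism $\gfrak\cong\gfrak^{\vee\vee}$ that makes the statement ``$\nu_\gfrak^t=\nu_\gfrak$'' literally meaningful, together with the observation that this self-adjointness is precisely the symmetry of $\kappa_\gfrak$. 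As a sanity check, the conclusion can also be read directly off the diagram defining $\varphi$ by comparing it, after transposing, with the diagram defining $\underline\psi=m_\phi\,\psi$ in Lemma \ref{lemma:morningfirst}.
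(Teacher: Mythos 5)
Your proof is correct, and it is exactly the routine computation the paper has in mind: the paper states this lemma without proof (calling it straightforward), and your expansion of $\psi^t$ via contravariance of the transpose together with the self-adjointness $\nu_\gfrak^t=\nu_\gfrak$, $\nu_\sfrak^t=\nu_\sfrak$ coming from the symmetry of the normalized Killing forms, followed by the cancellation $\nu_\sfrak^{-1}\circ\nu_\sfrak$ and $\nu_\gfrak^{-1}\circ\nu_\gfrak$, is the intended argument. Your care with the canonical identifications $\gfrak\cong\gfrak^{\vee\vee}$ and $\sfrak\cong\sfrak^{\vee\vee}$, needed to make $(\phi_\ast^t)^t=\phi_\ast$ and the self-adjointness statements literal, is the only subtle point and you handle it properly.
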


Let
$\phi\,:\, G\,\longrightarrow\, \SL(V)$ be a nontrivial holomorphic representation and 
$\Ecal_\phi\,=\,P\times_G V$ the corresponding vector bundle associated to a principal $G$-bundle
$\Pi \,:\, P\,\longrightarrow\, X$. Sections of
$\Ecal_\phi$ are functions $\sigma \,:\, P\,\longrightarrow\, V$ satisfying
the condition that $\sigma(pg)\,=\,\phi(g^{-1})\sigma(p)$ for all $g\, \in\, G$.
The adjoint bundle $\ad(P)\,=\,P\times_G\gfrak$ maps to the traceless
endomorphism bundle
$\End_0(\Ecal_\phi)$ using $\phi_\ast\,:=\,d\phi\,:\,\gfrak\,\longrightarrow\, \slfrak(V)$,
and we shall use the same notation $\phi_\ast$ for this map $\ad(P)\,\longrightarrow\, \End_0(\Ecal_\phi)$.
Notice that in this case, the map $\psi$ defined in Lemma
\ref{lemma:morningfirst} is $G$-equivariant, and hence defines a homomorphism 
$\End_0(\Ecal_\phi)\,\longrightarrow\, \ad(P)$. 
A $G$-invariant vector field $Y$ on $P$
defines a differential operator on sections by $Y(\sigma)$
($=\,d\sigma(Y)$). Invariance of $Y$ guarantees that $Y(\sigma)$
is again equivariant  with respect to $\phi$, and so defines a
section of $\Ecal_\phi$.  It is clear that the symbol of this
operator is $\Pi_\ast Y$. Therefore, denoting the relative
Atiyah algebra of $\Ecal_\phi\,\longrightarrow\, X$ by $\At_{X/T}(\Ecal_\phi)$, we have
constructed a map (cf.\ Atiyah \cite[Page 188]{Atiyahconn})
\begin{equation}\label{eqn:atiyah}
\widetilde\phi\,\,: \,\,\At_{X/T}(P)\,\,\longrightarrow\,\, \At_{X/T}(\Ecal_\phi) .
\end{equation}

The following is a consequence of the above.

\begin{proposition}\label{prop:wrongwaymap}
For a principal $G$-bundle $P\,\longrightarrow\, X$, a
representation $\phi\,:\,G\,\longrightarrow \,\SL(V)$, and the
associated vector bundle $\Ecal_\phi\,\longrightarrow\, X$, there is a natural map
$\widetilde\psi\,:\, \At_{X/T}(\Ecal_\phi)\,\longrightarrow\, \At_{X/T}(P)$ that makes the following
diagram commutative:
\begin{equation}
\begin{tikzcd}
0 \arrow[r] & \ad(P)\arrow[r] \arrow[d, "\phi"]& \At_{X/T}(P)\arrow[r]
\arrow[d,  "\widetilde{\phi}"] & \mathcal{T}_{X/T} \arrow[r]\arrow[d, equal]&0\\
0\arrow[r]& \End_0(\Ecal_\phi)\arrow[d, "{\psi}"]
\arrow[r]& \At_{X/T}(\Ecal_\phi)
\arrow[r]\arrow[d, "\widetilde{\psi}"]&\mathcal{T}_{X/T}
\arrow[r]\arrow[d, equal]& 0  \\
0 \arrow[r] & \ad(P) \arrow[r] \arrow [from=uu, bend left=75, crossing
over, "\operatorname{Id}" description, near start ]&
\At_{X/T}(P) 
\arrow[r] \arrow [from=uu, bend left=75, crossing over, "\operatorname{Id}" description , near start ]& \mathcal{T}_{X/T}  \arrow[r]& 0.
\end{tikzcd}
\end{equation}
Here $\psi\,:\, \End_0(\Ecal_\phi) \,\longrightarrow\, \ad(P)$ is induced from 
the map $\psi$ constructed in Lemma \ref{lemma:morningfirst} and $\widetilde{\psi}$ is
the map obtained by pushing out the exact sequence of $At_{X/T}(\mathcal{E}_{\phi})$ via $\psi$.
\end{proposition}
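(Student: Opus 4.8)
The plan is to construct $\widetilde\psi$ by pushing out the middle row of the diagram along the map $\psi \colon \End_0(\Ecal_\phi) \to \ad(P)$ of Lemma \ref{lemma:morningfirst}, and then to verify that the composite $\widetilde\psi \circ \widetilde\phi$ equals the identity on $\At_{X/T}(P)$. First I would recall that $\widetilde\phi$ of \eqref{eqn:atiyah} has already been constructed directly from the representation $\phi$: a $G$-invariant vector field $Y$ on $P$ acts on sections $\sigma \colon P \to V$ by $Y(\sigma) = d\sigma(Y)$, and invariance guarantees that this differential operator respects the $\phi$-equivariance, so it lies in $\At_{X/T}(\Ecal_\phi)$. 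On the subsheaf $\ad(P)$ this map is exactly $\phi_\ast \colon \ad(P) \to \End_0(\Ecal_\phi)$, and it covers the identity on symbols $\Tcal_{X/T}$; this is precisely the commutativity of the top square.

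For the map $\widetilde\psi$, the key observation is that $\psi$ of Lemma \ref{lemma:morningfirst} is $G$-equivariant (as noted in the text preceding the proposition, since it is built from the invariant forms $\nu_\gfrak, \nu_\sfrak$ and the transpose $\phi_\ast^t$), so it descends to a genuine sheaf homomorphism $\End_0(\Ecal_\phi) \to \ad(P)$. The natural construction of $\widetilde\psi$ is then the pushout of the extension
$$
0 \lra \End_0(\Ecal_\phi) \lra \At_{X/T}(\Ecal_\phi) \lra \Tcal_{X/T} \lra 0
$$
along $\psi$; by the universal property of the pushout, the target of this pushout is canonically identified with $\At_{X/T}(P)$ (both sit in the Atiyah sequence with kernel $\ad(P)$ and symbol $\Tcal_{X/T}$), and the resulting vertical map is $\widetilde\psi$, fitting into the bottom square with the identity on $\Tcal_{X/T}$. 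The left vertical composite is $\psi \circ \phi_\ast$, which by Lemma \ref{lemma:morningfirst} equals $\operatorname{Id}_{\ad(P)}$.

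The main point to verify — and the step I expect to require the most care — is that the outer composites $\widetilde\psi \circ \widetilde\phi$ really equal $\operatorname{Id}_{\At_{X/T}(P)}$, i.e.\ the two curved identity arrows close the diagram. On the subsheaf $\ad(P)$ this is exactly $\psi \circ \phi_\ast = \operatorname{Id}$ from Lemma \ref{lemma:morningfirst}. On symbols both $\widetilde\phi$ and $\widetilde\psi$ are the identity on $\Tcal_{X/T}$, so $\widetilde\psi \circ \widetilde\phi$ induces the identity on the quotient. A map of Atiyah sequences that restricts to the identity on both the sub and the quotient need not itself be the identity — it can differ by a homomorphism $\Tcal_{X/T} \to \ad(P)$ — so the genuine content is to check that this potential discrepancy vanishes. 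I would do this by working with a local splitting, i.e.\ a local connection on $P$: lifting a local vector field to a $G$-invariant horizontal field $Y$, computing $\widetilde\phi(Y)$ as the induced operator on $\Ecal_\phi$ and tracing it back through the pushout, using $\psi \circ \phi_\ast = \operatorname{Id}$ at the Lie-algebra level to see that the horizontal lift is sent to a horizontal lift for the same connection. Since the construction is natural and the ambiguity lies in an $\Ocal_X$-linear term controlled entirely by $\psi \circ \phi_\ast$, this reduces to the already-established pointwise identity, completing the verification.
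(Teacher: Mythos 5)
Your proposal is correct and follows exactly the route the paper intends: the paper gives no written proof (the proposition is stated as ``a consequence of the above,'' i.e.\ of the construction of $\widetilde\phi$ in \eqref{eqn:atiyah} and of Lemma \ref{lemma:morningfirst}), and your argument --- pushout of the Atiyah sequence of $\Ecal_\phi$ along $\psi$, commutativity of the two squares, and $\psi\circ\phi_\ast=\operatorname{Id}_{\ad(P)}$ closing the curved arrows --- is precisely that intended content. One remark: your parenthetical claim that the pushout target is canonically $\At_{X/T}(P)$ merely because both are extensions of $\Tcal_{X/T}$ by $\ad(P)$ is not by itself a valid justification (extensions with the same ends need not be isomorphic, let alone canonically), but you correctly flag the resulting $\Hom(\Tcal_{X/T},\ad(P))$-ambiguity and your local-splitting computation does repair it; a slightly cleaner fix is to observe that the composite $\At_{X/T}(P)\xrightarrow{\ \widetilde\phi\ }\At_{X/T}(\Ecal_\phi)\longrightarrow(\text{pushout})$ restricts to $\psi\circ\phi_\ast=\operatorname{Id}$ on $\ad(P)$ and to the identity on $\Tcal_{X/T}$, hence is an isomorphism of extensions by the five lemma, and \emph{taking this isomorphism as the identification} of the pushout with $\At_{X/T}(P)$ makes $\widetilde\psi\circ\widetilde\phi=\operatorname{Id}$ hold by construction, with no local connection computation needed.
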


\subsubsection{Relating the Ginzburg and Bloch-Esnault complexes}\label{sec:comparison}

We now compare the Ginzburg complex for a principal bundle $P$ with
the Bloch-Esnault  complex for the bundle $\Ecal_\phi$ associated to
$P$ via a nontrivial representation $\phi\,:\,G\,\longrightarrow\, \SL(V)$. 
Identifying the endomorphism bundle $\End_0(\Ecal_\phi)$ with its dual using the trace
homomorphism, consider the map 
$$
\phi_\ast^t \,:\,\End_0(\Ecal_\phi)\,\cong\, (\End_0(\Ecal_\phi))^{\vee}\,\lra
\,\ad(P)^\vee\ .$$
Let $\Kcal_\phi\,=\,\ker\phi_\ast^t$. The following is important for us.

\begin{proposition}\label{prop:tricky1} There is a lift of the inclusion
$\Kcal_\phi\, \longrightarrow\, \End_0(\Ecal_\phi)$:
	\begin{equation}
	\begin{tikzcd}
	&\Bcal_{0,X/T}^{-1}(\Ecal_\phi)\arrow[d, two heads] \\
	\Kcal_\phi \arrow[r] \arrow[ur, dashed] & \End_0(\Ecal_\phi)
	\end{tikzcd}
	\end{equation}
\end{proposition}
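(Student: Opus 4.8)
The plan is to recognize the dashed arrow as a splitting: pulling back the Bloch--Esnault extension $0\,\to\,\Omega_{X/T}\,\to\,\Bcal^{-1}_{0,X/T}(\Ecal_\phi)\,\to\,\End_0(\Ecal_\phi)\,\to\,0$ along the inclusion $\iota\,:\,\Kcal_\phi\,\hookrightarrow\,\End_0(\Ecal_\phi)$ produces an extension $0\,\to\,\Omega_{X/T}\,\to\,\iota^\ast\Bcal^{-1}_{0,X/T}(\Ecal_\phi)\,\to\,\Kcal_\phi\,\to\,0$ of $\Ocal_X$-modules, and an $\Ocal_X$-linear splitting of this pullback is exactly the desired lift. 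So the whole problem reduces to computing the class of this pulled-back sequence in $\Ext^1_{\Ocal_X}(\Kcal_\phi,\,\Omega_{X/T})$ and showing it vanishes.

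First I would record the linear algebra of $\Kcal_\phi$. Since $\gfrak$ is simple and $\phi_\ast\,\neq\,0$, the map $\phi_\ast\,:\,\ad(P)\,\to\,\End_0(\Ecal_\phi)$ is fiberwise injective, and by the definition of the Dynkin index the trace form restricts on $\phi_\ast(\ad P)$ to $m_\phi$ times the form induced by $\kappa_\gfrak$; as $m_\phi\,\neq\,0$ and $\kappa_\gfrak$ is nondegenerate, the trace form is nondegenerate on $\phi_\ast(\ad P)$. Because $\Kcal_\phi\,=\,\ker\phi_\ast^t$ is precisely the trace-orthogonal complement of $\phi_\ast(\ad P)$, this yields a $G$-equivariant orthogonal decomposition $\End_0(\Ecal_\phi)\,=\,\phi_\ast(\ad P)\oplus\Kcal_\phi$ of subbundles. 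Under the trace identification $\End_0(\Ecal_\phi)^\vee\,\cong\,\End_0(\Ecal_\phi)$, the dual of $\iota$ becomes the orthogonal projection $\End_0(\Ecal_\phi)\,\twoheadrightarrow\,\Kcal_\phi$, which annihilates the summand $\phi_\ast(\ad P)$.

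The key step is to identify the class of the Bloch--Esnault extension. Under the trace isomorphism $\Ext^1_{\Ocal_X}(\End_0(\Ecal_\phi),\,\Omega_{X/T})\,\cong\,H^1(X,\,\End_0(\Ecal_\phi)\otimes\Omega_{X/T})$, I would show that this class is a nonzero multiple of the Atiyah class $\operatorname{at}(\Ecal_\phi)$, which is the defining property of the degree $-1$ term of the trace complex built from $\Ecal\boxtimes\Ecal'(\Delta)/\Ecal\boxtimes\Ecal'(-\Delta)$ and the trace pushout. Since $\Ecal_\phi\,=\,P\times_G V$ is associated to $P$ through $\phi$, any connection on $P$ induces one on $\Ecal_\phi$ whose curvature is $\phi_\ast$ of that of $P$; hence $\operatorname{at}(\Ecal_\phi)\,=\,\phi_\ast(\operatorname{at}(P))$ lies in the image of $\phi_\ast\,:\,H^1(\ad(P)\otimes\Omega_{X/T})\,\to\,H^1(\End_0(\Ecal_\phi)\otimes\Omega_{X/T})$, that is, in the $\phi_\ast(\ad P)$-summand (note $\Ecal_\phi$ has trivial determinant, so this class is automatically traceless). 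Combining the two previous steps finishes the argument: the contravariant functor $\Ext^1(-,\,\Omega_{X/T})$ sends $\iota$ to the orthogonal projection onto $\Kcal_\phi$, and applying it to $\operatorname{at}(\Ecal_\phi)$, which sits in the orthogonal complement $\phi_\ast(\ad P)$ of $\Kcal_\phi$, gives zero. Thus $\iota^\ast\Bcal^{-1}_{0,X/T}(\Ecal_\phi)$ is split, and the splitting is the lift.

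I expect the main obstacle to be the identification in the third paragraph, namely matching the extension class of the explicit sheaf $\Bcal^{-1}_{0,X/T}(\Ecal_\phi)$ with the Atiyah class and verifying the functoriality $\operatorname{at}(\Ecal_\phi)\,=\,\phi_\ast(\operatorname{at}(P))$ directly at the level of the diagonal construction $\Ecal\boxtimes\Ecal'(\Delta)/\Ecal\boxtimes\Ecal'(-\Delta)$ and its trace pushout; once this is in place the orthogonal-projection bookkeeping is routine. An alternative that avoids computing the class globally is to build the lift locally from a local connection on $P$ and check that the resulting ambiguity lands in $\phi_\ast(\ad P)\otimes\Omega_{X/T}$, so that its trace-orthogonal $\Kcal_\phi$-component, which is what the lift must control, is well defined independently of the choice.
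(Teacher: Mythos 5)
Your proposal is correct, but it takes a genuinely different route from the paper. The paper does not pass through $\Ext^1$ at all: it constructs an explicit local lift $\Phi_\alpha$ of $\Bcal^{-1}_{0,X/T}(\Ecal_\phi)\twoheadrightarrow \End_0(\Ecal_\phi)$ using local coordinates and a local holomorphic connection, computes the \v{C}ech difference $\Phi_{\alpha\beta}=\Phi_\alpha-\Phi_\beta$ explicitly (finding $\beta\mapsto \beta(\Ascr_{\alpha\beta}-\tfrac{1}{2}\Theta_{\alpha\beta}\cdot\mathbf{I})$, whose trace on traceless $\beta$ is $\tr(\beta\Ascr_{\alpha\beta})$ with $\Ascr_{\alpha\beta}$ an Atiyah-class cocycle), and then observes that choosing the local connections to come from $P$ puts $\Ascr_{\alpha\beta}$ in $\operatorname{im}\phi_\ast$, so the gluing cocycle vanishes \emph{identically} on $\Kcal_\phi$ and the local lifts glue on the nose to a global lift. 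Your argument instead works at the level of extension classes: you reduce to vanishing of the pulled-back class in $\Ext^1_{\Ocal_X}(\Kcal_\phi,\Omega_{X/T})$, identify the Bloch--Esnault class with the Atiyah class (this is your flagged lemma; it is exactly what the paper's cocycle computation establishes, and is also the content of the result of Baier--Bolognesi--Martens--Pauly that the paper quotes only \emph{after} this proposition), and then use functoriality $\operatorname{at}(\Ecal_\phi)=\phi_\ast\operatorname{at}(P)$ together with your orthogonal decomposition $\End_0(\Ecal_\phi)=\phi_\ast(\ad P)\oplus\Kcal_\phi$ --- a correct extra structural input, resting on $m_\phi\neq 0$ and nondegeneracy of $\kappa_\gfrak$, which the paper never needs since it only uses that $\Kcal_\phi$ is the trace-annihilator of $\operatorname{im}\phi_\ast$. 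What each approach buys: yours is shorter if the identification with the Atiyah class is granted (e.g.\ by citation), and the direct-sum decomposition is a clean way to see the vanishing; the paper's is self-contained (it must prove the cocycle formula anyway, as it feeds into the subsequent corollary and the big comparison diagram) and yields an explicit, canonical lift rather than the mere existence of a splitting of a trivial extension. Note also that your closing ``alternative'' --- building the lift locally from connections on $P$ and checking the ambiguity lands in $\phi_\ast(\ad P)\otimes\Omega_{X/T}$ --- is precisely the paper's proof, so you had both arguments in hand.
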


\begin{proof}
We begin by describing a general situation. Namely, for bundles $\Ecal$,
$\Fcal$, we find a local lifting $\Phi_\alpha$ of the map
$$
\frac{ \Ecal\boxtimes \Fcal'
(\Delta)}{\Ecal\boxtimes \Fcal' (-\Delta)}
\,\lra\, \frac{ \Ecal\boxtimes \Fcal'
(\Delta)}{\Ecal\boxtimes \Fcal' } \,\simeq\, \Hom(\Fcal,\,\Ecal)
\,\lra\, 0.
$$
Choose local trivializations of $X\,\longrightarrow\, T$ and coordinate 
neighborhoods $U_\alpha,\, z_\alpha$ on a fixed fiber.
We set $\varphi_{\alpha\beta}\,=\,z_\alpha\circ z_\beta^{-1}$, so 
$\varphi_{\alpha\beta}'\,=\,dz_\alpha/dz_\beta$. The lift is
defined by choosing a (holomorphic) connection $\nabla_\alpha$ on
$\Fcal^\ast$.

Given sections $u$ and $v$ of $\Ecal$ and $\Fcal^\ast$ respectively on $U_\alpha$,  
define on $U_\alpha\times U_\alpha$
\begin{equation} \label{eqn:phi} 
\Phi_\alpha(u(q)\otimes v(q))\,=\,
\frac{u(p)\boxtimes
v(q)dz_\alpha(q)}{z_\alpha(p)-z_\alpha(q)}+u(p)\boxtimes\nabla_\alpha
v(q)\, \ \mod\,\ \Ical_\Delta.
\end{equation}
To show that this is well-defined, let $f(p,\,q)\,\in\, \Ical_\Delta$.
Multiplying on the right hand side, we have
\begin{equation}\label{eqn:error} 
\left(
\frac{f(p,q)dz_\alpha(q)}{z_\alpha(p)-z_\alpha(q)}+\frac{\partial
		f}{\partial q}(p,q)\right)u(p)\boxtimes v(q) \ \mod\ \Ical_\Delta.
	\end{equation} 
	Since 
	$$
	f(p,q)=\frac{\partial f}{\partial q}(p,p)(z_\alpha(q)-z_\alpha(p))
	\ \mod\ \Ical_\Delta^2,
	$$
	we see that \eqref{eqn:error} vanishes, and so \eqref{eqn:phi} gives a
	well-defined lift. 
	Set
$\Ascr_{\alpha\beta}\,=\,\nabla_\alpha-\nabla_\beta$ and
$\Theta_{\alpha\beta}\,=\,(\varphi''_{\alpha\beta}/\varphi'_{\alpha\beta})dz_\beta$.
Then $\Ascr_{\alpha\beta}\,\in\, \End(\Fcal^\ast)\otimes \Omega_{X/T}$
is a 1-cocycle representing the Atiyah class of $\Fcal^\ast$, and $\Theta_{\alpha\beta}\,\in\, \Omega_{X/T}$ 
is a cocycle for the affine structure (cf.\ \cite[p.\ 164]{Gunning66}). 
	
Notice that
\begin{align*}
z_\alpha(p)-z_\alpha(q)
&\,=\,\varphi_{\alpha\beta}(z_\beta(p))-\varphi_{\alpha\beta}(z_\beta(q))
\\
&=\,\varphi'_{\alpha\beta}(z_\beta(q))(z_\beta(p)-z_\beta(q))+\frac{1}{2}
\varphi''_{\alpha\beta}(z_\beta(q))(z_\beta(p)-z_\beta(q))^2+\cdots
\end{align*}
from which we have
$$
\frac{dz_\alpha(q)}{z_\alpha(p)-z_\alpha(q)}\,=\,
	\frac{dz_\beta(q)}{z_\beta(p)-z_\beta(q)}-\frac{1}{2}\Theta_{\alpha\beta}
	\ \ \mod\,\ \Ical_\Delta.
	$$
	The cocycle 
	$\Phi_{\alpha\beta}\,=\,\Phi_\alpha-\Phi_\beta\,\in\,
	\Hom\left(\Hom(\Fcal,\,\Ecal),\, \Hom(\Fcal,\,\Ecal)\otimes
	\Omega_{X/T}\right)
	$
	is then given by
	$$
	\Phi_{\alpha\beta}(u\otimes v)\,=\, u\otimes
	\Ascr_{\alpha\beta}v-\frac{1}{2}u\otimes v\otimes\Theta_{\alpha\beta}.
	$$
In the  case of $\Fcal\,=\,\Ecal$, we may write simply
	\begin{equation*} \label{eqn:cocycle}
	\Phi_{\alpha\beta}\,:\, \End(\Ecal)\,\lra\, \End(\Ecal)\otimes
	\Omega_{X/T}\,:\, \beta\,\longmapsto\,
	\beta\left(\Ascr_{\alpha\beta}-\frac{1}{2}\Theta_{\alpha\beta}\cdot\Ibold\right).
	\end{equation*}
	The extension class for $\Bcal_0^{-1}(\Ecal)$ is then given by the trace of endomorphisms
	\begin{equation} \label{eqn:extension}
	\tr\Phi_{\alpha\beta}\,:\, \End_0(\Ecal) \,\lra\, \Omega_{X/T},\ \ \beta\,\longmapsto\,
	\tr(\beta\Ascr_{\alpha\beta}).
	\end{equation}
	since $\tr \beta\,=\,0$.
	
Finally, to complete the proof we must show that if $\Ecal\,=\,\Ecal_\phi$,
then $\tr\Phi_{\alpha\beta}$ vanishes on $\Kcal_\phi$. When $\Ecal\,=\,\Ecal_\phi$,
we may choose local holomorphic connections on $P$, so that $\Ascr_{\alpha\beta}$ 
is in the image of $\phi_\ast$. But $\Kcal_\phi$ consists precisely of endomorphisms that are orthogonal
to these under the trace pairing. Thus, the proposition follows from
the expression in \eqref{eqn:extension}.
\end{proof}

It is shown in \cite[Thm.\ B.2.6]{BBMP20} that the exact sequence for
$\Bcal_{0,X/T}^{-1}(\Ecal)$ is dual to the (traceless) Atiyah algebra sequence
for $\Ecal$ using the trace map. In the case of $\Ecal_\phi$, we have the
natural map $\widetilde\phi_\ast^t \,:\, \At_{X/T}(\Ecal_\phi)^\vee\,\longrightarrow
\, \At_{X/T}(P)^\vee$. Then, by Proposition \ref{prop:tricky1} we have the following:

\begin{corollary}
Let $P\,\longrightarrow\, X$ be a principal $G$-bundle, $\phi\,:\,G\,\longrightarrow\,\SL(V)$ a
holomorphic representation, and $\Ecal_\phi\,\longrightarrow\, X$ the associated vector bundle.
Then the degree $-1$ part of the Bloch-Esnault complex
$\Bcal^{-1}_{X/T}(\Ecal_\phi)$ is the pullback of the $-1$ part of the
Ginzburg complex in the commutative diagram
$$
\begin{tikzcd}
0 \arrow{r} &  \Omega_{X/T}\arrow{r}\arrow[d, equal] &
	\Bcal_{0,X/T}^{-1}(\Ecal_0) \arrow{r}\arrow[d, "\widetilde\phi_\ast^t"]
	&
	\End_0(\Ecal_\phi) \arrow{r}\arrow[d, "\phi_\ast^t"]& 0  \\
	0 \arrow{r} &  \Omega_{X/T}\arrow{r} &
	\Sfrak^{-1}_{X/T}(P) \arrow{r}
	&
	\ad(P) \arrow{r}& 0.
	\end{tikzcd}
	$$
\end{corollary}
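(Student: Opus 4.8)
The plan is to reduce the statement to a standard fact about pullbacks of extensions, and then to pin down the comparison by a \v{C}ech cocycle computation that has essentially already been carried out in the proof of Proposition \ref{prop:tricky1}. Recall the elementary homological lemma: a morphism between two short exact sequences that is the identity on their common kernel exhibits the upper sequence as the pullback of the lower one along the induced map of quotients (the induced map from the upper middle term to the fiber product is then an isomorphism by the five lemma). Applied here, it suffices to produce a map of $\Ocal_X$-modules $\widetilde\phi_\ast^t\,:\,\Bcal^{-1}_{0,X/T}(\Ecal_\phi)\,\longrightarrow\,\Sfrak^{-1}_{X/T}(P)$ that restricts to the identity on $\Omega_{X/T}$ and covers $\phi_\ast^t\,:\,\End_0(\Ecal_\phi)\,\longrightarrow\,\ad(P)$ on the quotients; equivalently, to verify the identity of extension classes $(\phi_\ast^t)^\ast[\Sfrak^{-1}_{X/T}(P)]\,=\,[\Bcal^{-1}_{0,X/T}(\Ecal_\phi)]$ in $\Ext^1(\End_0(\Ecal_\phi),\,\Omega_{X/T})$.

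To build $\widetilde\phi_\ast^t$ I would dualize at the level of Atiyah algebras. By \cite[Thm.\ B.2.6]{BBMP20} the sequence defining $\Bcal^{-1}_{0,X/T}(\Ecal_\phi)$ is, via the trace pairing, the dual of the traceless relative Atiyah sequence of $\Ecal_\phi$, while $\Sfrak^{-1}_{X/T}(P)\,=\,\At_{X/T}(P)^\vee$ is the dual of the Atiyah sequence of $P$ after the identification $\ad(P)\,\cong\,\ad(P)^\vee$ by $\nu_\gfrak$. The comparison map $\widetilde\phi\,:\,\At_{X/T}(P)\,\longrightarrow\,\At_{X/T}(\Ecal_\phi)$ of Proposition \ref{prop:wrongwaymap} covers $\phi_\ast$ on adjoint/endomorphism bundles and the identity on symbols; its transpose is therefore the identity on the kernel $\Omega_{X/T}$ and covers $\phi_\ast^t$ on the quotient, which is exactly the vertical map $\widetilde\phi_\ast^t$ demanded by the diagram.

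It remains to check that this dualized map is indeed compatible with the pushed-out (trace) description of $\Bcal^{-1}_{0,X/T}(\Ecal_\phi)$, and here I would match the two classes on cocycles. From the proof of Proposition \ref{prop:tricky1}, the class of $\Bcal^{-1}_{0,X/T}(\Ecal_\phi)$ is represented by $\beta\,\longmapsto\,\tr(\beta\,\Ascr_{\alpha\beta})$, and for the associated bundle $\Ecal_\phi$ the local connections may be chosen to descend from holomorphic connections on $P$, so that $\Ascr_{\alpha\beta}\,=\,\phi_\ast(\Ascr^{P}_{\alpha\beta})$ with $\Ascr^{P}_{\alpha\beta}$ a \v{C}ech cocycle for the Atiyah class of $P$. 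On the other hand the class of $\Sfrak^{-1}_{X/T}(P)$ is the $\nu_\gfrak$-transform of this Atiyah class, represented by $a\,\longmapsto\,\kappa_\gfrak(\Ascr^{P}_{\alpha\beta},\,a)$, so its pullback along $\phi_\ast^t$ is $\beta\,\longmapsto\,\kappa_\gfrak(\Ascr^{P}_{\alpha\beta},\,\phi_\ast^t\beta)$.

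The main obstacle is thus the single adjunction identity $\kappa_\gfrak(a,\,\phi_\ast^t\beta)\,=\,\tr(\phi_\ast(a)\,\beta)$ relating the normalized Killing form on $\gfrak$ to the trace form on $\End_0(\Ecal_\phi)$. But this is precisely the defining property of $\phi_\ast^t$, which by construction in Section \ref{sec:comparison} is the composite of the trace identification $\End_0(\Ecal_\phi)\,\cong\,\End_0(\Ecal_\phi)^\vee$, the transpose of $\phi_\ast$, and $\nu_\gfrak^{-1}$. Setting $a\,=\,\Ascr^{P}_{\alpha\beta}$ converts the pulled-back Ginzburg cocycle $\kappa_\gfrak(\Ascr^{P}_{\alpha\beta},\,\phi_\ast^t\beta)$ into $\tr(\phi_\ast(\Ascr^{P}_{\alpha\beta})\,\beta)\,=\,\tr(\beta\,\Ascr_{\alpha\beta})$, which is exactly the Bloch-Esnault cocycle. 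The two extension classes therefore agree, $\widetilde\phi_\ast^t$ is the identity on $\Omega_{X/T}$, and the upper row is the pullback of the lower one along $\phi_\ast^t$, as claimed.
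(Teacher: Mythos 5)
Your proposal is correct and follows essentially the same route as the paper, which obtains the corollary by combining the duality of \cite[Thm.\ B.2.6]{BBMP20} with the transpose $\widetilde\phi_\ast^t$ of the Atiyah-algebra map $\widetilde\phi$ and the cocycle computation of Proposition \ref{prop:tricky1}. Your only addition is to make explicit the \v{C}ech-level matching of extension classes via the adjunction $\kappa_\gfrak(a,\,\phi_\ast^t\beta)\,=\,\tr(\phi_\ast(a)\,\beta)$, a verification the paper leaves implicit in the phrase ``by Proposition \ref{prop:tricky1}.''
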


Consider the map $\widetilde{\psi}\,:\, \At_{X/T}(\mathcal{E}_{\phi})\,\longrightarrow\,
\At_{X/T}(P)$ obtained in Proposition \ref{prop:wrongwaymap} along with its dual
$\widetilde{\psi}^*\,:\, \At_{X/T}(P)^{\vee}\,\longrightarrow\,
\At_{X/T}(\mathcal{E}_{\phi})$. We summarize the above discussions in the following commutative diagram:
\begin{equation}\label{eqn:impdia}
\begin{tikzcd}
0\arrow[r] &\Omega_{X/T} \arrow[r] \arrow[rddd,
"m_{\phi}\operatorname{Id}" description, bend right=5]&
\Sfrak^{-1}_{X/T}(P)\arrow[rd, "\cong" ]
\arrow[r]\arrow[rddd, "\widehat{\phi}" description, bend right=7] &
\ad(P)\arrow[rd, "\nu_\gfrak"]\arrow[rddd, "-\phi" description, bend right=7]\arrow[r]&0\\
&0 \arrow[r, crossing over]	& \Omega_{X/T} \arrow[from=lu, crossing over, equal] \arrow[r, crossing over]\arrow[d, "m_{\phi}\operatorname{Id}" description] &
\At_{X/T}(P)^{\vee} \arrow[r,crossing over] \arrow[d, "m_{\phi}\widetilde{\psi}^*"  description]& \ad(P)^{\vee}\arrow[d, "m_{\phi}\psi^t" description]\arrow[r] &0\\
&0 \arrow[r, crossing over]	& \Omega_{X/T} \arrow[d, equal] \arrow[r,
crossing over] & \At_{X/T}(\Ecal_\phi)^{\vee} \arrow[r, crossing
over]\arrow[d, "\cong"] &
\End_0(\Ecal_\phi)^{\vee}\arrow[d, "-\nu^{-1}_{\mathfrak{sl}(r)}"]\arrow[r] &0\\
&0\arrow[r]&	\Omega_{X/T} \arrow[r] & \mathcal{B}_{0,X/T}^{-1}(\Ecal_\phi)
\arrow[r]&  \End_0(\Ecal_\phi)\arrow[r]&0.\\
\end{tikzcd}
\end{equation}
In \eqref{eqn:impdia}, $\widehat{\phi}$ is just the map obtained by composition in the middle column of \eqref{eqn:impdia}. Observe that 
\begin{equation}
\nu_{\mathfrak{sl}(r)}^{-1}\circ m_{\phi} \psi^*\circ \nu_{\mathfrak{g}}\,=\,m_{\phi}(\nu_{\mathfrak{sl}(r)}^{-1}\circ \psi^* \circ
\nu_{\gfrak})\,=\,m_{\phi}(\frac{1}{m_{\phi}}\phi)\,=\,\phi.
\end{equation}
Composing the maps that appear in the above diagram we get the following.

\begin{proposition}\label{prop:veryimportant}
The map $\phi\,:\, \ad(P)\,\longrightarrow\, \End_0(\Ecal_\phi)$
extends to  $\widehat{\phi}\,:\, \Sfrak^{-1}_{X/T}(P)\,\longrightarrow\,
\Bcal_{0,X/T}^{-1}(\Ecal_\phi)$
which restricts to multiplication by the Dynkin index $m_{\phi}$ 
on $\Omega_{X/T}$. Taking push-forward by $R^1\pi$ yields the commutative diagram
\begin{equation}\label{eqn:veryimp}
\begin{tikzcd}
0 \arrow[r] & \mathcal{O}_{T} \arrow[r]\arrow[d,
"m_{\phi}\operatorname{Id}"] & R^1\pi_{*}(\Sfrak^{-1}_{X/T}(P))
\arrow[r]\arrow[d, "\widehat{ \phi}"] & R^1\pi_{*} (\ad(P))
\arrow[r] \arrow[d, " -\phi"]&0\\
0 \arrow[r] & \mathcal{O}_{T} \arrow[r] &
R^1\pi_{*}(\Bcal_{0,X/T}^{-1}(\Ecal_\phi)) 
\arrow[r] & R^1\pi_{*} (\End_0(\Ecal_\phi)) \arrow[r] &0.
\end{tikzcd}
\end{equation}
\end{proposition}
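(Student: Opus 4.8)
The plan is to deduce the Proposition by assembling the four-row commutative diagram \eqref{eqn:impdia} and then applying the relative derived pushforward $R^\bullet\pi_\ast$. Concretely, I would \emph{define} $\widehat\phi$ to be the vertical composite running down the middle column of \eqref{eqn:impdia}: the Ginzburg identification $\Sfrak^{-1}_{X/T}(P)\cong\At_{X/T}(P)^\vee$, followed by $m_\phi\widetilde\psi^{\ast}$, followed by the duality isomorphism $\At_{X/T}(\Ecal_\phi)^\vee\cong\Bcal^{-1}_{0,X/T}(\Ecal_\phi)$. Reading off the outer columns of the diagram then gives both assertions of the first sentence: the left column is the composite $\operatorname{Id}\cdot(m_\phi\operatorname{Id})\cdot\operatorname{Id}=m_\phi\operatorname{Id}$ on $\Omega_{X/T}$, and the right column computes the induced map on the quotient $\ad(P)$.

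First I would verify that each of the three horizontal strips of \eqref{eqn:impdia} commutes. The top strip (Ginzburg row to dual-Atiyah row) commutes by the very definition of $\Sfrak^{-1}_{X/T}(P)=\widetilde\gfrak_P$ in Section~\ref{sec:g-complex}, where $\widetilde\gfrak_P$ is obtained from $\At_{X/T}(P)^\vee$ through the identification $\nu_\gfrak\,:\,\ad(P)\xrightarrow{\ \sim\ }\ad(P)^\vee$. The middle strip is the $\Omega$-dual of the diagram of Proposition~\ref{prop:wrongwaymap}, rescaled by $m_\phi$; here I would use that dualizing the commuting square of Atiyah sequences there produces the maps $m_\phi\widetilde\psi^{\ast}$ on total spaces and $m_\phi\psi^t$ on quotients, while acting by $m_\phi\operatorname{Id}$ on $\Omega_{X/T}$. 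The bottom strip commutes because, by \cite[Thm.\ B.2.6]{BBMP20}, the extension defining $\Bcal^{-1}_{0,X/T}(\Ecal_\phi)$ is the trace-dual of the traceless Atiyah sequence of $\Ecal_\phi$, which supplies the isomorphism $\At_{X/T}(\Ecal_\phi)^\vee\cong\Bcal^{-1}_{0,X/T}(\Ecal_\phi)$ together with the vertical map $-\nu_{\slfrak(r)}^{-1}$ on the quotient. The only genuine computation is to track the right-hand column: combining $m_\phi\psi^t$ with $-\nu_{\slfrak(r)}^{-1}$ and the displayed identity $\nu_{\slfrak(r)}^{-1}\circ m_\phi\psi^{\ast}\circ\nu_\gfrak=\phi$ (a consequence of Lemma~\ref{lemma:morningfirst}) shows that the composite induced on $\ad(P)$ is $-\phi$, matching the bent arrow in \eqref{eqn:impdia}.

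Next I would apply $R^\bullet\pi_\ast$ to the top row (the Ginzburg extension) and the bottom row (the Bloch--Esnault extension) of \eqref{eqn:impdia}. Since $\pi$ is proper with one-dimensional fibres, $R^{\ge 2}\pi_\ast=0$, so each long exact sequence truncates; and relative Serre duality for the family of curves gives $R^1\pi_\ast\Omega_{X/T}\cong\Ocal_T$. Using this, the two long exact sequences collapse to the horizontal short exact sequences of \eqref{eqn:veryimp}, while the naturality of the long exact sequence with respect to the morphism of extensions $\widehat\phi$ furnishes the vertical maps and the commutativity of \eqref{eqn:veryimp}; on $\Ocal_T\cong R^1\pi_\ast\Omega_{X/T}$ the left-hand arrow is $m_\phi\operatorname{Id}$ and on $R^1\pi_\ast\ad(P)$ the right-hand arrow is $-\phi$, both inherited from the outer columns above.

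I expect the main obstacle to be the \emph{left-exactness} in \eqref{eqn:veryimp}, i.e.\ that $\Ocal_T\cong R^1\pi_\ast\Omega_{X/T}$ genuinely injects into $R^1\pi_\ast\Sfrak^{-1}_{X/T}(P)$ (resp.\ into $R^1\pi_\ast\Bcal^{-1}_{0,X/T}(\Ecal_\phi)$). This amounts to the vanishing of the connecting homomorphism $R^0\pi_\ast\ad(P)\to R^1\pi_\ast\Omega_{X/T}$, which is cup product with the extension class of the row; it holds in the situation of interest because for regularly stable $G$-bundles the fibrewise sections of $\ad(P)$ reduce to the centre, and it is precisely what the Beilinson--Schechtman $\pi$-algebra formalism of Section~\ref{sec:atiyah} is designed to encode. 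A secondary, purely bookkeeping difficulty is keeping the normalizations $\nu_\gfrak,\nu_{\slfrak(r)}$ and the sign $-\phi$ consistent across the three strips, a consistency that the identity $\nu_{\slfrak(r)}^{-1}\circ m_\phi\psi^{\ast}\circ\nu_\gfrak=\phi$ exists to control.
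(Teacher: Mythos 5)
Your proposal is correct and follows essentially the same route as the paper: the paper also defines $\widehat{\phi}$ as the composite down the middle column of \eqref{eqn:impdia} (the Ginzburg identification $\Sfrak^{-1}_{X/T}(P)\cong \At_{X/T}(P)^{\vee}$, then $m_\phi\widetilde{\psi}^{\ast}$ from dualizing Proposition \ref{prop:wrongwaymap}, then the Bloch--Esnault duality of \cite[Thm.\ B.2.6]{BBMP20}), reads off $m_\phi\operatorname{Id}$ on $\Omega_{X/T}$ and $-\phi$ on $\ad(P)$ via the identity $\nu_{\slfrak(r)}^{-1}\circ m_\phi\psi^{\ast}\circ\nu_\gfrak=\phi$, and then applies $R^1\pi_{\ast}$. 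The left-exactness of \eqref{eqn:veryimp} that you flag is indeed left implicit in the paper; it is exactly the vanishing $R^0\pi_\ast\ad(P)=0$ (and $R^0\pi_\ast\End_0(\Ecal_\phi)=0$ for the associated stable bundle) in the regularly stable setting, which the paper records later as Lemma \ref{lem:obviousbutimp}(ii), so your treatment is consistent with, and slightly more explicit than, the paper's.
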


\subsubsection{The relative extension class} \label{sec:atiyah-bott}

In this section we consider the special case where $X\,\longrightarrow\, T$ is simply a product
$X\,=\,C\times T$. Let $P\,\longrightarrow\, X$ be a principal $G$-bundle such that the
restriction of it to $C\times \{t\}$ is regularly stable for every $t$. We wish to
compute the extension class of the top (and hence also the bottom) row of
\eqref{eqn:veryimp}. Since we assume the curve is fixed, for future
reference we call this the \emph{relative extension class}. 
To state the result, let $M_G^{rs}$ denote the moduli space of regularly
stable bundles on $C$. Since this a coarse moduli space, there is a morphism
$\varphi\,:\, T\,\longrightarrow\, M_G^{rs}$. By the deformation theory of 
principal $G$-bundles there is a homomorphism
$
\Tcal T\,\lra\, R^1\pi_\ast(\ad(P))
$.
Via the Dolbeault isomorphism, we have
\begin{equation*} \label{eqn:dolbeault}
H^1(T, \,(R^1\pi_\ast(\ad(P))^\vee)\,\lra\, H^1(T,\, (\Tcal T)^\vee)\,\simeq\,
H^{1,1}_{\dbar}(T)
\end{equation*}
and so the extension class of \eqref{eqn:veryimp} defines
a class in $H^{1,1}_{\dbar}(T)$. On the other hand, there is another natural
class 
$\varphi^\ast[\Theta]\,\in\, H^{1,1}_{\dbar}(T)$ due to Atiyah-Bott, which we
define below. The result is then:

\begin{theorem} \label{thm:atiyah-bott}
Let $X\,=\, C\times T$ as above. 
The image of the extension class of \eqref{eqn:veryimp}
in $H^{1,1}_{\dbar}(T)$ is exactly  the pullback  $\varphi^\ast[\Theta]$
of the Atiyah-Bott-Narasimhan-Goldman form. 
\end{theorem}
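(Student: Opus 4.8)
The plan is to pull the class back to the sheaf level, compute the connecting homomorphism of the pushed-forward sequence in Dolbeault cohomology, and recognize the answer as the cup-product formula defining the Atiyah--Bott--Narasimhan--Goldman form. First I would record that, by construction, the degree $-1$ term of the Ginzburg complex sits in
$$0\,\lra\,\Omega_{X/T}\,\lra\,\Sfrak^{-1}_{X/T}(P)\,\lra\,\ad(P)\,\lra\,0,$$
which is the dual of the relative Atiyah sequence \eqref{eqn:atiyahG} under the identification $\ad(P)\simeq\ad(P)^\vee$ coming from $\nu_\gfrak$; hence its class in $\Ext^1_X(\ad(P),\Omega_{X/T})$ is the relative Atiyah class $\operatorname{at}_{X/T}(P)$ transported through the normalized Killing form. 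Since $X=C\times T$ is a product, applying $R\pi_\ast$ yields exactly the top row of \eqref{eqn:veryimp}: one has $R^0\pi_\ast\ad(P)=0$ because each restriction $P|_{C\times\{t\}}$ is regularly stable, $R^2\pi_\ast\Omega_{X/T}=0$ by dimension, and the residue gives $R^1\pi_\ast\Omega_{X/T}\simeq\mathcal{O}_T$ from $H^1(C,\omega_C)\simeq\CBbb$. The class to compute is therefore the connecting map $\delta\in\Ext^1_T(R^1\pi_\ast\ad(P),\mathcal{O}_T)$, whose image in $H^{1,1}_{\dbar}(T)$ is obtained by applying the dual of the deformation map $\rho\,:\,\Tcal T\to R^1\pi_\ast\ad(P)$.

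Then I would compute $\delta$ analytically. A fiberwise harmonic (Hermitian--Einstein) structure furnishes a $C^\infty$ splitting of the pushed-forward sequence, and $\dbar$ of this splitting represents $\delta$ in $H^{0,1}(T,(R^1\pi_\ast\ad(P))^\vee)$; composing with the dual of $\rho$ lands it in $H^{0,1}(T,\Omega^1_T)=H^{1,1}_{\dbar}(T)$. Representing $\rho(v)$, for $v\in\Tcal_tT$, by a harmonic $\ad(P_t)$-valued $(0,1)$-form $a_v$ on $C$, the standard identification of the connecting homomorphism with a fiber-integrated cup product shows that the resulting $(1,1)$-form on $T$ has value
$$\int_{C}\kappa_\gfrak\bigl(a_v\wedge\overline{a_w}\bigr)$$
on $v,w\in\Tcal_tT$, where the residue $R^1\pi_\ast\Omega_{X/T}\simeq\mathcal{O}_T$ and the Killing contraction $\ad(P)\otimes\ad(P)\to\mathcal{O}_C$ turn the cup product into the integral. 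This is precisely the Dolbeault representative of the Atiyah--Bott--Narasimhan--Goldman form pulled back along $\varphi$, so the image of the extension class is $\varphi^\ast[\Theta]$, as claimed.

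The crux is the second step: identifying the homological connecting map $\delta$ with the analytic cup-product/Serre-duality pairing while keeping every identification coherent. One must verify that the residue isomorphism $R^1\pi_\ast\Omega_{X/T}\simeq\mathcal{O}_T$ and fiberwise Serre duality are compatible with the Killing-form identification $\ad(P)\simeq\ad(P)^\vee$ built into $\Sfrak^{-1}_{X/T}(P)$, and track signs and normalizations so that no spurious factor intervenes---in particular the Dynkin index $m_\phi$, which is reserved for Theorem \ref{thm:main}, must not appear here. As a cross-check, and as an alternative to the direct computation, one can transport the statement to the vector-bundle setting: by Proposition \ref{prop:veryimportant} the top row of \eqref{eqn:veryimp} maps to the Bloch--Esnault row while rescaling $\Omega_{X/T}$ by $m_\phi$; the extension class of the latter is the classical Atiyah--Bott form for $\SL_r$ by \cite{BS88,BBMP20}; and the Dynkin-index relation $\kappa_{\slfrak_r}\circ(\phi_\ast\otimes\phi_\ast)=m_\phi\,\kappa_\gfrak$ underlying Lemma \ref{lemma:morningfirst} makes the factor $m_\phi$ cancel, recovering $\varphi^\ast[\Theta]$ for $G$.
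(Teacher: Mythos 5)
Your proposal is correct and follows essentially the same route as the paper: the paper likewise represents the extension class in Dolbeault cohomology by $\dbar$ of the smooth splitting furnished by the Narasimhan--Seshadri--Ramanathan connection $\omega_A$, evaluates it on harmonic representatives $\alpha_j$, and recognizes the resulting form $i\sum \langle \alpha_k,\alpha_j\rangle\, ds_k\wedge d\overline s_j$ as the pullback $\varphi^\ast\Theta$. The only presentational difference is that the paper first builds \v{C}ech cocycles $\sigma_{UV}=s_U-s_V$ from local holomorphic families of relatively flat connections (obtained via local sections of the forgetful map $M^{dR,rs}_G\to M^{rs}_G$) and then trivializes them smoothly by $\omega_A$, whereas you invoke directly the standard fact that $\dbar$ of a $C^\infty$ splitting of an extension of locally free sheaves represents its class.
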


We first define $\Theta$. Under the assumptions, the direct image $R^1\pi_\ast(\ad(P))\,\longrightarrow\, T$ is
locally free and its fiber at $t\,\in\, T$ isomorphic to 
$H^1(C,\, \ad(P_t))$, where $P_t\,=\, P\vert_{C\times\{t\}}$. By Dolbeault isomorphism, this is isomorphic to
$H_{\dbar}^{0,1}(C,\,\ad(P_t))$.
 
Fix a maximal compact subgroup $K\,\subset\, G$. Let $\rho_K$ be the Cartan involution of $\mathfrak g$ that
fixes ${\rm Lie}(K)$ and acts on ${\rm Lie}(K)^\perp$ as multiplication by $-1$.
The Narasimhan-Seshadri-Ramanathan theorem (\cite{NarSesh}, \cite{Ramanathan:75}) asserts the existence of 
a $C^\infty$ reduction of structure group $P^t_K\,\subset\, P_t$ of $P_t$ to $K$,
satisfying the condition that the associated Chern connection on $P_t$ is flat.
The Chern connection defines harmonic representatives $\Hcal^{0,1}(C,\,\ad(P_t))$ of the Dolbeault
group $\Hcal_{\dbar}^{0,1}(C,\,\ad(P_t))$. 
The Cartan involution $\rho_K$ produces a conjugate linear involution
$\rho'$ of $\ad(P_t)\otimes T^*C\otimes {\mathbb C}$ that exchanges $\Omega^{1,0}_C(\ad(P_t))$ and
$\Omega^{0,1}_C(\ad(P_t))$ and preserves the harmonic forms; this $\rho'$ is also
called the Hodge $\ast$ operator. Then a hermitian inner product on 
$\Hcal^{0,1}(C,\,\ad(P_t))$ is given by
\begin{equation*}
\langle \alpha,\, \beta\rangle \,=\, \sqrt{-1}\int_C(\alpha\wedge\rho'(\beta))_\gfrak ,
\end{equation*}
where $\alpha$ and $\beta$ are harmonic representatives. The almost complex structure on $M_G^{rs}$
at $\varphi(t)$ is given by $\rho'$ on harmonic $1$-forms. The tangent space to the space of flat
$K$-connections at the point corresponding to the Narasimhan-Seshadri-Ramanathan connection on $P^t_K$
coincides with $H^1(C, \,\underline{\ad}(P^t_K))$, where $\underline{\ad}(P^t_K)$ is the local system.
We note that $H^1(C, \,\underline{\ad}(P^t_K))$ is identified with $H^{0,1}_{\dbar}(C,\,\ad(P))$ by the map
$\alpha \, \longmapsto\, \alpha-\sqrt{-1}\rho(\alpha)$.
The almost complex structure on $H^1(C, \,\underline{\ad}(P^t_K))$ is given by $\rho'$.
For $a,\, b\,\in\, H^1(C,\, \underline{\ad}(P^t_K))$, the Riemannian metric is given by 
$$
\langle a,\,b\rangle\,=\,-\int_C (a\wedge \ast b)_\gfrak\,=\, 2\text{Re}\,
\langle\alpha,\beta\rangle,
$$
where $\alpha,\, \beta\,\in\, H^{0,1}_{\dbar}(C,\ad(P))$ correspond to $a,\, b$ respectively.
Finally, the \emph{Atiyah-Bott-Narasimhan-Goldman}
symplectic form is given by 
$
\Theta(a,\,b)\,=\, 2\text{Im}\, \langle\alpha,\beta\rangle
$.
It is closed and of type $(1,\,1)$, and so defines a class in
$H^{1,1}_{\dbar}(T)$. 

\begin{proof}[Proof of Theorem \ref{thm:atiyah-bott}]
First, we note the following:
\begin{enumerate}
\item $\At(P)^\vee$ is the sheaf of invariant $1$-forms on
$P$, i.e.,\ $\varphi\,\in\, \Gamma(\pi^{-1}(U),\,T^\ast P)$ such that
$R_g^\ast\varphi\,=\,\varphi$ for all $g\,\in\, G$.

\item $\ad(P)^\vee$ is the sheaf of maps $P\,\longrightarrow\,
\gfrak^\ast$ that are equivariant with respect to the
co-adjoint action, i.e.,\ $f(pg)\,=\,f(p)\circ\Ad_g$. 

\item The map $j$ is defined explicitly by:
$j(\varphi)(p)(X)\,=\,\varphi(X^\sharp_p)$, for $X\,\in\, \gfrak$. We have
\begin{align*}
j(\varphi)(pg)(X)&=\varphi(pg)(X^\sharp_{pg})
=\varphi(pg)\left( (R_g)_\ast(\Ad_g X)^\sharp_p\right) \\
&=R_g^\ast\varphi((\Ad_g X)^\sharp_p) 
=j(\varphi)\circ\Ad_g(X)
\end{align*}
consequently $j(\varphi)$ satisfies the correct invariance. 
\end{enumerate}

A relative holomorphic connection $\omega$ on $P$ gives a holomorphic splitting
of \eqref{eqn:dualatiyahG}. Explicitly, if $\varphi$ is a local
section of $\ad(P)^\vee$, then define $\omega(\varphi)$ in
$\At(P)^\vee$, by $\omega(\varphi)(Y)\,=\,\varphi(\omega(Y))$. 
Notice that $$
j(\omega(\varphi))(p)(Y)\,=\,\omega(\varphi)(Y^\sharp_p)
\,=\,\varphi(\omega(Y^\sharp_p))\,=\,\varphi(Y),
$$
so this map is a splitting of the sequence in \eqref{eqn:dualatiyahG}. 
Let $M^{dR}_G$ denote the moduli space of holomorphic $G$-connections
on a fixed curve $C$. Let $M^{dR,rs}_G$ denote the open subset where the
underlying bundle $P$ is regularly stable. Hence there is the forgetful
map $p\,:\,M^{dR, rs}_G\,\longrightarrow\, M_G^{rs}$, and this
is a submersion. Therefore, we can find local holomorphic sections.
With this understood, let
$\{\alpha_j\}$  be a local holomorphic frame for the bundle
$R^1\pi_\ast(\ad(P)^\vee)$ over an open set 
$U\,\subset\, T$, and let $\{\alpha_j^\ast\}$ be the
dual frame. From  the previous paragraph, after shrinking $U$, we may 
find a holomorphic family of relatively flat connections
$\omega_U^{hol}$ for the restriction of $P$ to $C\times U$. Then a lift of the identity
endomorphism of  $R^1\pi_\ast(\ad(P)^\vee)$
to $R^1\pi_\ast(\At(P)^\vee)\otimes (R^1\pi_\ast(\ad(P)^\vee))^\vee$ is given by 
$$
s_U\,=\,\sum_j \omega_U^{hol}(\alpha_j)\otimes \alpha_j^\ast.
$$
For open subsets $U$, $V$, the difference $\sigma_{UV}\,=\,s_U-s_V$ is valued in 
$R^1\pi_\ast(\Omega_C)\otimes (R^1\pi_\ast(\ad(P)^\ast))^\ast$,
and the $1$-cocycle  $\{\sigma_{UV}\}$ represents the extension class. 

We now shift to the Dolbeault picture.
First, the identification $R^1\pi_\ast(\Omega_C)\,\simeq\, \CBbb$
is given by integration along the fiber $C$. 
Next, using the Killing form we identify $\ad P\,\simeq\, (\ad P)^\vee$.
Consider the map $\Tcal T\,\longrightarrow\,
R^1\pi_\ast(ad P)$; then as discussed above the extension class
defines via the Dolbeault isomorphism  a $\dbar$-closed  $(1,\,1)$-form on $T$. 
Let $P_K\,\subset \,P$ be the reduction of structure group given by the
Narasimhan-Seshadri-Ramanathan theorem. The Chern connection on $P_K$
extends to a connection $\omega_A$ on $P$ that restricts
to a flat connection on each $P_t$, although
$\omega_A$ does not vary holomorphically in $t\,\in\, T$. 
We can write $\omega^{hol}_U\,=\,\omega_A+B_U$, where $B_U$ is an
invariant $\gfrak$-valued relative $1$-form on $P$ that vanishes on
vertical vector fields. Let $b_U\,=\,
\sum_j B_U^{hol}(\alpha_j)\otimes \alpha_j^\ast$.
Then since $\omega_A$ is globally defined, we have $s_U-s_V\,=\,b_U-b_V$.
Hence, $\{b_U\}$ gives a $C^\infty$ trivialization of the 1-cocycle
$\{\sigma_{UV}\}$. By definition, the Dolbeault representative
of the extension class is therefore given by the global
$(0,\,1)$-form $\{\dbar b_U\}$. Now, since $\omega^{hol}_U$ is holomorphic, the
extension class is represented by 
$$
\Omega_A\,=\,- \sum_j\dbar\omega_A(\alpha_j)\otimes \alpha_j^\ast.
$$

We calculate this form at a given point $t\,\in\, U$. Choose local holomorphic
coordinates $s_1,\,\cdots,\, s_N$ centered at the point $[P_t]$ in $M_G^{rs}$
corresponding to $t$. We may so arrange that the
holomorphic sections $\alpha_j(s_1,\cdots, s_N)$ of $\Tcal M_G^{rs}$ in a neighborhood $[P_t]$
satisfy the condition $\alpha_j(0)\,=\,\partial/\partial s_j$.
Thus, we have also $\alpha_j^\ast(0)\,=\,ds_j$. Each $\alpha_j(s)$ defines a
Dolbeault class in $H^{0,1}(C,\, \ad \Pcal_s)$, for which we use the same
notation. Let $\rho_s(\alpha_j(s))$ be the hermitian conjugate with respect
to $\rho_s$. With this notation, the Chern connection is given by 
$$
\omega_A(s)\,=\,\omega_A(0)+\sum t_j(s)\alpha_j(s)+\sum \overline
t_j(s)\rho_s(\alpha_j(s)) \ \mod \ \Tcal^\vee M_G^{rs},
$$
where $t_j(0)\,=\,0$, $\partial t_j/\partial s_i(0)\,=\,\delta_{ij}$ and
$\partial t_j/\partial \overline s_i(0)\,=\,0$.
We therefore have 
\begin{align*}
\omega_A(\alpha_k)&\,=\, \omega_{A}(0)(\alpha_k)+\sum\overline
t_j(\rho(\alpha_j),\alpha_k) \\
\dbar    \omega_A(\alpha_k)(0)&\,=\, \sum (\rho(\alpha_j),\alpha_k)d\overline
s_j.
\end{align*}
Recalling that $H^1(C,\,K_C)\,\simeq\,\CBbb$ is gotten by integration over $C$
we have that $\Omega_A$ is the pullback of the form
$$
\sum \int_C (\rho(\alpha_j), \alpha_k) ds_k\wedge d\overline s_j
\,=\,i\sum \langle \alpha_k, \alpha_j\rangle ds_k\wedge d\overline s_j
	$$
and this is precisely the Atiyah-Bott-Narasimhan-Goldman symplectic form. 
\end{proof}

\section{Determinant of cohomology and Beilinson-Schechtman classes}\label{sec:determinant}

In this section we reinterpret Theorem
\ref{thm:atiyah-bott} in terms of the quasi-isomorphisms between the
Bloch-Esnault complex, the trace complex of Beilinson-Schechtman, and 
the Ginzburg complex. Finally we related all of these to the Atiyah class of the determinant of coholomology line bundle.

\subsection{Definitions} \label{sec:bs}
Recall the notation ${\mathcal{E}}'\,:=\,{\mathcal{E}}\otimes \Omega_{{X}/T}$. 
The sheaf $\mathcal{D}^{\leq 1}( \mathcal{E})$ of first order differential operators 
on $\Ecal$ can be identified with $\frac{\BOXE(2\D)}{\BOXE}$. There is an exact sequence of sheaves 
\begin{equation}
0 \,\lra\,\frac{\BOXE(\D)}{\BOXE} \,\lra\,\frac{\BOXE(2\D)}{\BOXE}
\,\lra\,\frac{\BOXE(2\D)}{\BOXE(\Delta)} \,\lra\, 0.
\end{equation}
Similarly,  we use another short exact sequence  from \cite{BS88}:
\begin{equation}\label{eqn:tr1}
0 \,\lra\,\frac{\BOXE}{\BOXE(-\D)} \,\lra\,\frac{\BOXE(2\D)}{\BOXE(-\D)}
\,\lra\,\frac{\BOXE(2\D)}{\BOXE} \,\lra\, 0.
\end{equation}
Since we have $\At_{X/T}(\Ecal)\,\subset\, \mathcal{D}^{\leq1}_{X/T}(\Ecal)$, pulling back \eqref{eqn:tr1} we get a
quasi-Lie algebra that fits into the following short exact sequence: 
\begin{equation}
0\, \lra\,\End(\Ecal)\otimes \Omega_{X/T} \,\lra\,{}^{tr}\widetilde{
	\mathcal{A}}_{X/T}(\Ecal)^{-1} \,\lra\,\At_{X/T}(\Ecal)\,\lra\, 0.
\end{equation}
Pushing it forward via the trace map $\tr\,:\, \End(\Ecal)\,\longrightarrow \,\mathcal{O}_X$ we get that 
\begin{equation}
0 \,\lra\,\Omega_{X/T} \,\lra\,{}^{tr} \mathcal{A}_{X/T}(\Ecal)^{-1}
\,\lra\,\At_{X/T}(\Ecal)\,\lra\, 0.
\end{equation}

Recall that 
$\widetilde{\mathcal{B}}_{X/T}^{-1}(\Ecal)\,:=\,\frac{\BOXE(\D)}{\BOXE(-\D)}$, and
define $\widetilde{\Bcal}_{0,X/T}^{-1}(\Ecal)$ to be the trace free version. Now
consider the pushforwards of $\widetilde{\mathcal{B}}_{X/T}^{-1}(\Ecal)$ and
$\widetilde{\Bcal}_{0,X/T}^{-1}(\Ecal)$ via $\operatorname{tr}$, and denote
them by ${\mathcal{B}}^{-1}_{X/T}(\Ecal)$ and $\Bcal_{0,X/T}^{-1}(\Ecal)$ respectively.
The natural inclusion map $$\End(\mathcal{E})\,\cong\, \frac{\BOXE(\Delta)}{\BOXE}
\,\hookrightarrow \,\frac{\BOXE(2\D)}{\BOXE}$$ fits the above objects into the following commutative diagram 
\begin{equation}\label{eqn:3rowdiagram}
\begin{tikzcd}
0 \arrow[r] & \End(\mathcal{E})\otimes \Omega_{{X}/T} \arrow[r] & {}^{tr}\widetilde{\mathcal{A}}_{X/T}(\mathcal{E})^{-1} \arrow[r] & \operatorname{At}_{{X}/T}(\Ecal) \arrow[r]& 0\\
0 \arrow[r]& \End(\Ecal)\otimes \Omega_{{X}/T} \arrow[r] \arrow[u, equal] & \widetilde{\mathcal{B}}_{X/T}^{-1}(\Ecal) \arrow[r]\arrow[u, hook]& \End(\Ecal) \arrow[r] \arrow[u, hook] & 0 \\
0 \arrow[r]& \End(\Ecal)\otimes \Omega_{{X}/T}
\arrow[r]\arrow[d, "\operatorname{tr}"] \arrow[u, equal] &
\widetilde{\Bcal}_{0,X/T}^{-1}(\Ecal) \arrow[r] \arrow[d,
"\operatorname{tr}_*"]\arrow[u, hook]& \End_0(\Ecal) \arrow[r] \arrow[u, hook] \arrow[d, equal]& 0 \\
0 \arrow[r] & \Omega_{{X}/T} \arrow[r] & \Bcal_{0,X/T}^{-1}(\Ecal)
\arrow[r]  & \End_0(\Ecal) \arrow[r] &0\\
0 \arrow[r] & \Omega_{{X}/T} \arrow[from=u, equal]\arrow[r]\arrow[from=uuu,  bend left=57 ,crossing over,"\operatorname{tr}" description ] & \mathcal{B}_{X/T}^{-1}(\Ecal) \arrow[r]  \arrow[from=u, hook] \arrow[from=uuu,  bend left=57 ,crossing over,"\operatorname{tr}_*" description ] & \End(\Ecal) \arrow[r] \arrow[from=u, hook]\arrow[from=uuu, bend left=57, crossing over, equal] &0.\\
\end{tikzcd} 
\end{equation}
\subsection{Relative set-up}
Now, consider the relative set-up. Therefore, we have a smooth scheme $T$ over $S$ with connected fibers, and
$\pi\,:\,X\,\longrightarrow\, T$ is a family of connected smooth curves of genus $g$. 
We have short the exact sequence:
\[
0 \,\longrightarrow\, \mathcal{T}_{X/T}\,\longrightarrow\, \mathcal{T}_{X/S}\,\longrightarrow\,
\pi^*\mathcal{T}_{T/S}\,\longrightarrow\, 0.
\]
Now as $\Ocal_T$ modules, $\pi^{-1}(\mathcal{T}_{T/S})\,\subset\, \pi^\ast(\mathcal{T}_{T/S})$. 
Define $\mathcal{T}_{X,\pi,S}\,:=\,d\pi^{-1}(\pi^{-1}(\mathcal{T}_{T/S}))$.  Then we have
\begin{equation}\label{eqn:fibersplit}
0\,\lra\, \mathcal{T}_{X/T} \,\lra\, \mathcal{T}_{X,\pi,S}\,\stackrel{d\pi}{\lra}\, \pi^{-1}
\mathcal{T}_{T/S}\,\lra\, 0.
\end{equation} 
Let $\operatorname{At}_{X,\pi,S}(\mathcal{E})$ denote the Atiyah algebra satisfying the fundamental exact sequence 
$$0 \lra \operatorname{End}(\mathcal{E})\rightarrow \operatorname{At}_{X,\pi,S}(\mathcal{E})\lra \mathcal{T}_{X,\pi,S}\lra 0$$ Observe that $\operatorname{At}_{X/T}(\mathcal{E})\subset \operatorname{At}_{X,\pi,S}(\mathcal{E})\subset \operatorname{At}_{X/S}(\mathcal{E})$.
\begin{definition}
	The {\em Beilinson-Schechtman trace complex}
	${}^{tr}\mathcal{A}_{X,\pi,S}(\mathcal{E})^{\bullet}$ is the $\Omega_{X/T}$-extension of the 
	dgla associated the quasi-Lie algebra
	${}^{tr}{\mathcal{A}}_{X/T}(\mathcal{E})^{-1}$, 
	i.e., ${}^{tr}\mathcal{A}_{X,\pi,S}(\mathcal{E})^{(0)}\,=\,\operatorname{At}_{X,\pi,S}(\mathcal{E})$, ${}^{tr}\mathcal{A}_{X,\pi,S}(\mathcal{E})^{-1}= {}^{tr}{\mathcal{A}}_{X/T}(\mathcal{E})^{-1}$ and
	${}^{tr}\mathcal{A}_{X,\pi,S}(\mathcal{E})^{(-2)}\,=\,\mathcal{O}_X$.
\end{definition}

Throughout the rest of this section we will have the assumption
that there is a
splitting of the short exact sequence in \eqref{eqn:fibersplit}.

This condition holds for example in the case of fiber products. 
We then use the splitting of \eqref{eqn:fibersplit} to pull-back the Atiyah algebra $\operatorname{At}_{X,\pi,S}(\mathcal{E})$ further via $\pi^{-1}\mathcal{T}_{T/S}$ to obtain $\mathcal{B}_{X,\pi,S}$:
$$
\begin{tikzcd}
0 \arrow{r} &  \operatorname{End}{\mathcal{E}} \arrow{r} \arrow[equal]{d} & \operatorname{At}_{X,\pi,S}(\mathcal{E})
\arrow{r} &
\mathcal{T}_{X,\pi,S} \arrow{r} & 0 \\
0 \arrow{r} &  \operatorname{End}{\mathcal{E}} \arrow{r} & \mathcal{B}_{X,\pi,S}^0(\mathcal{E}) \arrow{r}
\arrow[hook]{u} &
\pi^{-1}\mathcal{T}_{T/S} \arrow{r} \arrow[hook]{u}& 0.
\end{tikzcd}
$$
Let $\mathcal{B}_{0,X,\pi,S}^0(\mathcal{E})$ be the pushout of the exact sequence defining $\mathcal{B}^0_{X,\pi,S}(\mathcal{E})$ via
the quotient homomorphism $\operatorname{End}(\mathcal{E})\,\longrightarrow\, \operatorname{End}_0(\mathcal{E})$. 
\begin{definition}
The {\em Bloch-Esnault complex} $\mathcal{B}^{\bullet}_{X,\pi,S}(\mathcal{E})$ is the three term complex consisting of the locally free
sheaves  $\mathcal{B}^{0}_{X,\pi,S}(\mathcal{E})$, $\mathcal{B}^{-1}_{X/T}(\mathcal{E})$ and $\mathcal{B}^{-2}_{X,\pi,S}(\mathcal{E})\,\cong\, \mathcal{O}_X$
in degrees $0$, $-1$ and $-2$ respectively, and zero otherwise.
\end{definition}
 We will denote by $\mathcal{B}_{0,X/T,S}^{\bullet}$ the
traceless Bloch-Esnault complex as considered in \cite{ST} and \cite{BBMP20}.
\subsection{Determinant of cohomology}Let $\pi:X\rightarrow T$ be a family of smooth projective curves and let $\mathcal{E}$ be a vector bundle $X$. Then the object $R\pi_*\mathcal{E}$ in the bounded derived category of $T$ is represented by a two term complex $\mathcal{E}_0\rightarrow \mathcal{E}_1$. The determinant of cohomology $\mathcal{L}$ upto isomorphism is defined by 
$$\mathcal{L}:=\bigwedge^{top}\mathcal{E}_1\otimes \bigwedge^{top}\mathcal{E}_0^{\vee}$$
We refer the reader to
\cite[\S~6.3]{BMW1}
for more details on determinant of cohomology $\mathcal{L}$. We recall the following result of Beilinson-Schechtman, \cite{BS88}, that connects the Atiyah algebra of
$\operatorname{At}_{X/T}(\mathcal{L})$ and the trace complex of $\Ecal$ and also a result of Bloch-Esnault \cite{BE} connecting the
trace complex with the Bloch-Esnault complex. 

\begin{proposition}The Atiyah algebra $\operatorname{At}_{T/S}(\mathcal{L}^{-1})$ is isomorphic to the relative Atiyah algebra $R^0\pi_*({}^{tr}\mathcal{A}_{X,\pi,S}(\Ecal)^{\bullet})$.
If $\mathcal{B}_{X,\pi,S}^{\bullet}(\mathcal{E})$ is the relative Bloch-Esnault complex
associated to a family of curves $X\,\longrightarrow\, T$ parametrized by $T\rightarrow S$ such that the exact sequence in Equation 
\eqref{eqn:fibersplit} splits, then $\mathcal{B}^{\bullet}_{X,\pi,S}(\mathcal{E})$ is 
quasi-isomorphic to the trace complex ${}^{tr}\mathcal{A}_{X,\pi,S}(\mathcal{E})^{\bullet}$.
\end{proposition}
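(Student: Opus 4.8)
The plan is to establish the two isomorphisms separately, since each is a relative-over-$S$ refinement of a statement already in the literature; throughout I use the standing hypothesis that \eqref{eqn:fibersplit} splits.

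For the identification $\operatorname{At}_{T/S}(\mathcal{L}^{-1})\simeq R^0\pi_\ast({}^{tr}\mathcal{A}_{X,\pi,S}(\mathcal{E})^{\bullet})$, I would first note that ${}^{tr}\mathcal{A}_{X,\pi,S}(\mathcal{E})^{\bullet}$ is, by its very definition, a $\pi$-algebra: its $(-2)$-term is $\mathcal{O}_X$, its $(-1)$-term is the $\Omega_{X/T}$-extension of the quasi-Lie algebra ${}^{tr}\mathcal{A}_{X/T}(\mathcal{E})^{-1}$, its $0$-term is $\operatorname{At}_{X,\pi,S}(\mathcal{E})$, and the subcomplex $\mathcal{A}^{\bullet}_{-2}$ is the relative de Rham complex $\Omega^{\bullet}_{X/T}[2]=(\mathcal{O}_X\xrightarrow{d}\Omega_{X/T})$. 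Applying the relative pushforward statement recorded at the end of \S\ref{sec:pi} then yields an Atiyah algebra $0\to\mathcal{O}_T\to R^0\pi_\ast({}^{tr}\mathcal{A}_{X,\pi,S}(\mathcal{E})^{\bullet})\to\mathcal{T}_{T/S}\to 0$ on $T$. To match it with $\operatorname{At}_{T/S}(\mathcal{L}^{-1})$ it suffices to compare their extension classes in $H^1(T,\Omega_{T/S})$, and this is precisely the main theorem of \cite{BS88}, which I would invoke fiberwise over $S$. Concretely, the \v{C}ech cocycle defining the trace complex---assembled from local expansions of the type appearing in the proof of Proposition \ref{prop:tricky1}---computes the Atiyah class of the Knudsen--Mumford determinant $\mathcal{L}$, and the splitting of \eqref{eqn:fibersplit} ensures that these local data and the relative pushforward are compatible over $S$, so that no computation beyond \cite{BS88} is required.

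For the quasi-isomorphism $\mathcal{B}^{\bullet}_{X,\pi,S}(\mathcal{E})\simeq{}^{tr}\mathcal{A}_{X,\pi,S}(\mathcal{E})^{\bullet}$, I would write down an explicit morphism of complexes $\mathcal{B}^{\bullet}_{X,\pi,S}(\mathcal{E})\to{}^{tr}\mathcal{A}_{X,\pi,S}(\mathcal{E})^{\bullet}$: the identity of $\mathcal{O}_X$ in degree $-2$; the inclusion $\mathcal{B}^{-1}_{X/T}(\mathcal{E})\hookrightarrow{}^{tr}\mathcal{A}_{X/T}(\mathcal{E})^{-1}$ in degree $-1$, coming from the embedding $\End(\mathcal{E})\hookrightarrow\mathcal{D}^{\leq 1}(\mathcal{E})$ displayed in \eqref{eqn:3rowdiagram}; and the inclusion $\mathcal{B}^0_{X,\pi,S}(\mathcal{E})\hookrightarrow\operatorname{At}_{X,\pi,S}(\mathcal{E})$ in degree $0$, which exists because $\mathcal{B}^0_{X,\pi,S}(\mathcal{E})$ is defined as the pullback of $\operatorname{At}_{X,\pi,S}(\mathcal{E})$ along $\pi^{-1}\mathcal{T}_{T/S}\hookrightarrow\mathcal{T}_{X,\pi,S}$. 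This morphism restricts to the identity on the common subcomplex $\Omega^{\bullet}_{X/T}[2]$, hence induces a morphism on the quotient complexes $\mathcal{Q}^{\bullet}_{\mathcal{B}}$ and $\mathcal{Q}^{\bullet}_{tr}$ obtained by dividing out $\Omega^{\bullet}_{X/T}[2]$.

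It then remains to check that $\mathcal{Q}^{\bullet}_{\mathcal{B}}\to\mathcal{Q}^{\bullet}_{tr}$ is a quasi-isomorphism. On the trace side the quotient is $\operatorname{At}_{X/T}(\mathcal{E})\hookrightarrow\operatorname{At}_{X,\pi,S}(\mathcal{E})$ in degrees $-1,0$, injective with cokernel $\mathcal{T}_{X,\pi,S}/\mathcal{T}_{X/T}\simeq\pi^{-1}\mathcal{T}_{T/S}$ by \eqref{eqn:fibersplit}; on the Bloch--Esnault side it is $\End(\mathcal{E})\hookrightarrow\mathcal{B}^0_{X,\pi,S}(\mathcal{E})$ in degrees $-1,0$, again injective with cokernel $\pi^{-1}\mathcal{T}_{T/S}$. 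Both quotient complexes thus have vanishing $H^{-1}$ and $H^0\simeq\pi^{-1}\mathcal{T}_{T/S}$, and the induced map on $H^0$ is the identity of $\pi^{-1}\mathcal{T}_{T/S}$ because $\mathcal{B}^0_{X,\pi,S}(\mathcal{E})$ projects onto $\pi^{-1}\mathcal{T}_{T/S}$ compatibly with $\operatorname{At}_{X,\pi,S}(\mathcal{E})\to\mathcal{T}_{X,\pi,S}$. Since the original map is moreover the identity on $\Omega^{\bullet}_{X/T}[2]$, the five lemma applied to the long exact cohomology sequences of the two extensions shows that $\mathcal{B}^{\bullet}_{X,\pi,S}(\mathcal{E})\to{}^{tr}\mathcal{A}_{X,\pi,S}(\mathcal{E})^{\bullet}$ is a quasi-isomorphism. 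I expect the only genuine difficulty to lie in the first part, namely in transcribing the Beilinson--Schechtman determinant computation into the relative-over-$S$ framework; the second part is formal once the diagram \eqref{eqn:3rowdiagram} is available.
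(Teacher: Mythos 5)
Your proposal is correct, and it actually supplies more than the paper does: the paper states this Proposition with no proof, recalling the first isomorphism from Beilinson--Schechtman \cite{BS88} and the quasi-isomorphism from Bloch--Esnault \cite{BE}, with the splitting hypothesis on \eqref{eqn:fibersplit} being exactly what makes those citations apply in the relative-over-$S$ set-up. Your first part therefore coincides with the paper's route --- the observation that ${}^{tr}\mathcal{A}_{X,\pi,S}(\mathcal{E})^{\bullet}$ is by construction a $\pi$-algebra, so that the relative pushforward of \S\ref{sec:pi} produces an Atiyah algebra $0\to\mathcal{O}_T\to R^0\pi_\ast({}^{tr}\mathcal{A}_{X,\pi,S}(\mathcal{E})^{\bullet})\to\mathcal{T}_{T/S}\to 0$, identified with $\operatorname{At}_{T/S}(\mathcal{L}^{-1})$ by \cite{BS88}. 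Your second part genuinely departs from the paper by replacing the citation of \cite{BE} with a self-contained argument: the morphism of complexes assembled from \eqref{eqn:3rowdiagram} and the pullback description of $\mathcal{B}^{0}_{X,\pi,S}(\mathcal{E})$, the identity on the common subcomplex $\Omega^{\bullet}_{X/T}[2]$, and the five lemma applied to the two quotient complexes, each quasi-isomorphic to $\pi^{-1}\mathcal{T}_{T/S}$ via the splitting. This argument is sound --- the degree $-1$ inclusion does descend from $\widetilde{\mathcal{B}}^{-1}_{X/T}(\mathcal{E})\hookrightarrow{}^{tr}\widetilde{\mathcal{A}}_{X/T}(\mathcal{E})^{-1}$ to the trace pushouts, injectively by the snake lemma, and the induced map on $H^0$ of the quotients is the identity because the splitting section followed by the projection $\mathcal{T}_{X,\pi,S}\to\mathcal{T}_{X,\pi,S}/\mathcal{T}_{X/T}\simeq\pi^{-1}\mathcal{T}_{T/S}$ is the identity. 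It also mirrors the technique the paper itself uses later (the quasi-isomorphism $\mathfrak{R}^{\bullet}(\mathcal{P})\simeq\pi^{-1}\mathcal{T}_{T/S}$ and Proposition \ref{prop:old1}); what it buys is independence from \cite{BE} together with an explicit check that the quasi-isomorphism restricts to the identity on $\Omega^{\bullet}_{X/T}[2]$ and on $\pi^{-1}\mathcal{T}_{T/S}$, which is precisely the compatibility the subsequent diagrams of the paper need.

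One caution on your first part: an isomorphism of Atiyah algebras must intertwine brackets and anchors, so ``comparing extension classes in $H^1(T,\Omega_{T/S})$'' is strictly weaker than the claim --- equality of classes only identifies the underlying extensions of $\mathcal{O}_T$-modules. This does not damage the proof, because \cite{BS88} establishes the isomorphism directly at the level of $\pi$-algebras, and their construction works over a base, so the relative statement follows from the $\operatorname{At}_{X,\pi,S}$ modification of the degree-zero term rather than from a genuinely fiberwise argument; you should quote the theorem in that form and drop the extension-class reduction.
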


\subsection{Relative Ginzburg complex}

Let $\mathcal{P}$ be a principal ${G}$-bundle on $X\,\longrightarrow\, T\lra S$, and consider the Atiyah algebra
$\operatorname{At}_{X,\pi}(\Pcal)$ with the
fundamental exact sequence 
$$0 \,\lra\, \gfrak_{\mathcal{P}}\,\longrightarrow\, \operatorname{At}_{X,\pi,S}(\Pcal)\,\lra\, \mathcal{T}_{X,\pi,S}\,\lra\,0.$$
We then use the splitting of the exact sequence in \eqref{eqn:fibersplit} to pull-back further to define $\mathfrak{S}^0(\Pcal)$:
$$
\begin{tikzcd}
0 \arrow{r} &  \gfrak_{\mathcal{P}} \arrow{r} \arrow[equal]{d} & \operatorname{At}_{X,\pi,S}(\mathcal{P})
\arrow{r} &
\mathcal{T}_{X,\pi,S} \arrow{r} & 0 \\
0 \arrow{r} &  \gfrak_{\mathcal{P}} \arrow{r} & \mathfrak{S}^0_{X,\pi,S}(\mathcal{P}) \arrow{r}
\arrow[hook]{u} &
\pi^{-1}\mathcal{T}_{T/S} \arrow{r} \arrow[hook]{u}& 0.
\end{tikzcd}
$$
We consider the following three term complex $\mathfrak{S}^{\bullet}_X,\pi,S(\mathcal{P})$ which will be referred to as
the {\em relative Ginzburg complex}:
$$
\mathfrak{S}^{\bullet}_{X,\pi,S}(\Pcal)\,\,\,:=\,\,\begin{cases}
\mathcal{O}_X\ \ \mbox{if $i\,=\,-2$},\\
\mathfrak{S}^{-1}_{X/T}(\Pcal)\ \ \mbox{if $i\,=\,-1$},\\
\mathfrak{S}^{0}_{X,\pi,S}(\Pcal)\ \ \mbox{if $i\,=\,0$},\\
0,\ \ \mbox{otherwise}.
\end{cases}
$$
Let $\phi\,:\, {G}\,\longrightarrow\, \SL(V)$ be a holomorphic representation of dimension $r$, and let $\mathcal{E}_{\phi}$ be the
associated vector bundle. Then one recovers the Bloch-Esnault complex $\mathcal{B}_{0,X,\pi,S}^{\bullet}(\mathcal{E}_{\phi})$. 

We have the following commutative diagram in which the horizontal map is
the relative Ginzburg quasi-Lie algebra associated to a relative principal bundle $\mathcal{P}$ 
\begin{equation}
\begin{tikzcd}
&\mathcal{O}_{X} \arrow[equal]{r} \arrow{d} & \mathcal{O}_{X} \arrow[d] &\\
0\arrow[r]&\Omega_{X/T} \arrow{r} & \mathfrak{S}^{-1}_{X/T}(\mathcal{P})\arrow[r]\arrow[d]& \gfrak_{\mathcal{P}} \arrow[r]\arrow[d]& 0\\
&&\mathfrak{S}^{0}_{X,\pi,S}(\mathcal{P}) \arrow[equal]{r} & \mathfrak{S}^{0}_{X,\pi,S}(\mathcal{P}).
\end{tikzcd}
\end{equation}
The above diagram can be written as a short exact sequence of complexes 
\begin{eqnarray}
0 \,\longrightarrow \,\Omega^{\bullet}_{X/T}[2]\,\longrightarrow\, \mathfrak{S}^{\bullet}_{X,\pi,S}(\mathcal{P})\,\longrightarrow\,
\mathfrak{R}^{\bullet}(\mathcal{P})\,:=\,(\gfrak_{\mathcal{P}}\,\longrightarrow\, \mathfrak{S}^0_{X,\pi,S}(\mathcal{P}))\,\longrightarrow\, 0.
\end{eqnarray}
The complex $\mathfrak{R}^{\bullet}(\mathcal{P})$ is quasi-isomorphic to
$\pi^{-1}\mathcal{T}_{T/S}$. Hence, there is a natural map of complexes $
\pi^{-1}\mathcal{T}_{T/S}\,\cong\, \mathfrak{R}^{\bullet}(\mathcal{P})\,\longrightarrow\, \mathfrak{g}_{\mathcal{P}}[1]$. Taking
pushforward with $\pi$, we get a map $\mathcal{T}_{T/S}\,\cong\, \pi_*(\pi^{-1}\mathcal{T}_{T/S})
\,\longrightarrow\, R^1\pi_{n*}\mathfrak{g}_{\mathcal{P}}.$ We wish to compute $R^0\pi_\ast \mathfrak{S}_{X,\pi,S}^\bullet(\mathcal{P})$.  

To compute $R^0\pi_\ast \mathfrak{S}_{X,\pi,S}^\bullet(\mathcal{P})$, we need to compute the zero-th hypercohomology. 
Choose fine resolutions (local on $\Mcal$), 
$$
\begin{tikzcd}
\Gfrak^{-2}(\mathcal{P})\arrow["f"]{r} \arrow{d}
&\Gfrak^{-1}(\mathcal{P})\arrow["g"]{r}\arrow{d}& \Gfrak^0(\mathcal{P}) \arrow{d}\\
\Lcal_0(\mathcal{P})\arrow["f"]{r}\arrow["d"]{d} &\Mcal_0(\mathcal{P})\arrow["g"]{r}\arrow["d"]{d}& \Ncal_0(\mathcal{P}) \arrow["d"]{d}\\
\Lcal_1(\mathcal{P})\arrow["f"]{r} \arrow["d"]{d}&\Mcal_1(\mathcal{P})\arrow["g"]{r}\arrow["d"]{d}& \Ncal_1(\mathcal{P}) \arrow["d"]{d}\\
\vdots &\vdots&\vdots
\end{tikzcd}
$$
and compute the zero-th cohomology of the total complex
$
\begin{tikzcd}
\Ccal_{-1}\arrow["D_{-1}"]{r} &\Ccal_0\arrow["D_0"]{r} &\Ccal_1, \mbox{where}
\end{tikzcd}
$

$$
\Ccal_{-1}\,=\,\Lcal_1\oplus \Mcal_0\ , \ \Ccal_0\,=\,\Lcal_2\oplus\Mcal_1\oplus
\Ncal_0\ ,\ \Ccal_1\,=\,\Lcal_3\oplus\Mcal_2\oplus\Ncal_1,  \ \mbox{and the differentials are}
$$
\begin{align*}
&D_{-1}\,:\, \Ccal_{-1}\,\longrightarrow\, \Ccal_0,\,\  (\ell_1,\, m_0)\,\longmapsto\, (d\ell_1,\, dm_0-f\ell_1,\,
gm_0) \\
&D_0\,:\, \Ccal_{0}\,\longrightarrow\, \Ccal_1,\,\  (\ell_2,\, m_1,\, n_0)\,\longmapsto\, (d\ell_2,\, dm_1+f\ell_2,\,
dn_0-gm_1).
\end{align*}

We note three facts in the next lemma the proofs of which are immediate in view of the assumptions.

\begin{lemma}\label{lem:obviousbutimp}
Let $\pi\,:\, X \,\longrightarrow \,T$ be as above. Then the following hold:
\begin{enumerate}
\item $R^2\pi_\ast\Ocal_X\,=\,\{0\}$ (since the relative dimension is $1$).

\item $R^0\pi_\ast\gfrak_\mathcal{P}\,=\,\{0\}$ (assuming the stable locus is non-empty). 

\item Since the fibers are connected, assume that the natural map 
\begin{equation}\label{eqn:natural}
\pi_\ast(\pi^{-1}\mathcal{T}_{T/S})\,\cong\, \mathcal{T}_{T/S}\to R^1\pi_\ast\gfrak_P
\end{equation}
is an isomorphism. Then the map $R^1\pi_\ast(\mathfrak{S}_{X/T}^{-1}(\mathcal{P}))\,
\longrightarrow\, R^1\pi_\ast(\mathfrak{S}_{X,\pi,S}^0(\mathcal{P}))$
is zero as it factors through $R^1\pi_\ast\gfrak_\mathcal{P}$. 
\end{enumerate}
\end{lemma}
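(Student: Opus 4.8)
The plan is to treat the three assertions separately, since each is a short consequence of cohomology and base change once the relevant fiberwise vanishing is identified; only (iii) requires an actual long exact sequence argument. I would first dispose of (i) and (ii). For (i), I would use that $\pi$ is a family of curves, so each fiber has cohomological dimension one; Grothendieck vanishing (equivalently, cohomology and base change applied to $H^{i}(C,\Ocal_C)=0$ for $i\ge 2$) then gives $R^{2}\pi_{\ast}\Ocal_X=\{0\}$. For (ii), I would identify the fiber of $\pi_{\ast}\gfrak_{\Pcal}$ at $t\in T$ with $H^{0}(C,\ad(\Pcal_{t}))=\Lie\Aut(\Pcal_{t})$, where $\Pcal_t=\Pcal\vert_{C\times\{t\}}$; regular stability forces $\Aut(\Pcal_{t})=Z(G)$, which is finite, so this space vanishes. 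As these fiberwise groups all vanish, a nonzero section of $\pi_{\ast}\gfrak_{\Pcal}$ would restrict nontrivially to some fiber, a contradiction, and hence $R^{0}\pi_{\ast}\gfrak_{\Pcal}=\{0\}$.

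For (iii), I would first read off from the relative Ginzburg diagram that the differential $\mathfrak{S}^{-1}_{X/T}(\Pcal)\to \mathfrak{S}^{0}_{X,\pi,S}(\Pcal)$ factors as the surjection $\mathfrak{S}^{-1}_{X/T}(\Pcal)\twoheadrightarrow \gfrak_{\Pcal}$ followed by the inclusion $\gfrak_{\Pcal}\hookrightarrow \mathfrak{S}^{0}_{X,\pi,S}(\Pcal)$ coming from the defining sequence $0\to \gfrak_{\Pcal}\to \mathfrak{S}^{0}_{X,\pi,S}(\Pcal)\to \pi^{-1}\mathcal{T}_{T/S}\to 0$. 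Applying $R^{1}\pi_{\ast}$, the map in question therefore factors through $R^{1}\pi_{\ast}\gfrak_{\Pcal}$, and it suffices to show that the second factor $b\,:\,R^{1}\pi_{\ast}\gfrak_{\Pcal}\to R^{1}\pi_{\ast}\mathfrak{S}^{0}_{X,\pi,S}(\Pcal)$ vanishes. I would obtain this from the long exact sequence of the defining sequence above, whose relevant segment reads $\pi_{\ast}(\pi^{-1}\mathcal{T}_{T/S})\xrightarrow{\,\delta\,}R^{1}\pi_{\ast}\gfrak_{\Pcal}\xrightarrow{\,b\,}R^{1}\pi_{\ast}\mathfrak{S}^{0}_{X,\pi,S}(\Pcal)$, so that exactness gives $\ker b=\operatorname{im}\delta$. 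Here $\delta$ is the connecting homomorphism, which I would identify with the natural map of \eqref{eqn:natural}; by hypothesis this is an isomorphism, hence surjective, so $\operatorname{im}\delta=R^{1}\pi_{\ast}\gfrak_{\Pcal}$ and therefore $b=0$. Composing, the map of (iii) is zero.

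The only genuinely nonformal point, and the step I expect to be the main obstacle, is the identification of the connecting homomorphism $\delta$ with the natural map \eqref{eqn:natural}. Everything else is bookkeeping, but this identification requires tracing through the construction preceding the lemma, where \eqref{eqn:natural} arises from the map of complexes $\pi^{-1}\mathcal{T}_{T/S}\cong\mathfrak{R}^{\bullet}(\Pcal)\to \gfrak_{\Pcal}[1]$; I would check that this agrees, up to the usual sign, with the boundary map of the short exact sequence defining $\mathfrak{S}^{0}_{X,\pi,S}(\Pcal)$, after which (iii) follows at once.
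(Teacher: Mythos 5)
Your proposal is correct and takes essentially the same route the paper intends: the paper declares these facts ``immediate in view of the assumptions,'' and your arguments --- fiberwise Grothendieck vanishing for (i), triviality of infinitesimal automorphisms of regularly stable bundles for (ii), and the long exact sequence of $0\to\gfrak_{\mathcal{P}}\to\mathfrak{S}^0_{X,\pi,S}(\mathcal{P})\to\pi^{-1}\mathcal{T}_{T/S}\to 0$ combined with surjectivity of the connecting map for (iii) --- are exactly the suppressed bookkeeping. The one subtlety you flag is genuine but resolves by construction: the map \eqref{eqn:natural} is defined in the paper via the quasi-isomorphism $\pi^{-1}\mathcal{T}_{T/S}\simeq\mathfrak{R}^{\bullet}(\mathcal{P})$ followed by the projection $\mathfrak{R}^{\bullet}(\mathcal{P})\to\gfrak_{\mathcal{P}}[1]$, which is (up to sign) the standard realization of the connecting homomorphism of that short exact sequence.
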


We prove the next proposition under the assumption of Statement (iii) in Lemma \ref{lem:obviousbutimp}.

\begin{proposition}\label{prop:old1}
	There is an isomorphism
	$R^0\pi_{*}\mathfrak{S}_{X,\pi,S}^{\bullet}(\mathcal{P})\simeq
	R^1\pi_{*}\mathfrak{S}_{X/T}^{-1}(\mathcal{P})$ that is the
	identity on $\mathcal{O}_{T}$ and $\mathcal{T}_{T/S}$. 
\end{proposition}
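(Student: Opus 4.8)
The plan is to compute $R^0\pi_\ast\mathfrak{S}^{\bullet}_{X,\pi,S}(\mathcal{P})$ through the hypercohomology spectral sequence
$$E_1^{p,q}\,=\,R^q\pi_\ast\mathfrak{S}^p_{X,\pi,S}(\mathcal{P})\,\Longrightarrow\,R^{p+q}\pi_\ast\mathfrak{S}^{\bullet}_{X,\pi,S}(\mathcal{P}),$$
where $p\in\{-2,-1,0\}$ and, as the relative dimension is one, $q\in\{0,1\}$. This is nothing but the total complex $(\mathcal{C}_\bullet,D_\bullet)$ assembled above from the fine resolutions $\mathcal{L}_\bullet,\mathcal{M}_\bullet,\mathcal{N}_\bullet$, so one may equally well compute $\ker D_0/\operatorname{im}D_{-1}$ by hand; I phrase the argument spectrally. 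The aim is to show that exactly one entry on the antidiagonal $p+q=0$ survives, namely $E_\infty^{-1,1}=R^1\pi_\ast\mathfrak{S}^{-1}_{X/T}(\mathcal{P})$; note $E_1^{-2,2}=R^2\pi_\ast\mathcal{O}_X=0$ is automatic.

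First I would pin down the relevant entries of the $E_1$ page using Lemma \ref{lem:obviousbutimp}. Applying $\pi_\ast$ to $0\to\gfrak_{\mathcal{P}}\to\mathfrak{S}^0_{X,\pi,S}(\mathcal{P})\to\pi^{-1}\mathcal{T}_{T/S}\to0$ and using $R^0\pi_\ast\gfrak_{\mathcal{P}}=0$ identifies $\pi_\ast\mathfrak{S}^0_{X,\pi,S}(\mathcal{P})$ with the kernel of the connecting map $\mathcal{T}_{T/S}\to R^1\pi_\ast\gfrak_{\mathcal{P}}$; since this map is the isomorphism \eqref{eqn:natural}, we get $E_1^{0,0}=\pi_\ast\mathfrak{S}^0_{X,\pi,S}(\mathcal{P})=0$. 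On the other hand $E_1^{-2,1}=R^1\pi_\ast\mathcal{O}_X$ by properness and connectedness of the fibers, and $E_1^{-1,1}=R^1\pi_\ast\mathfrak{S}^{-1}_{X/T}(\mathcal{P})$ by definition.

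Next I would compute the two $d_1$ differentials that bear on $E_\infty^{-1,1}$. The horizontal arrow $d_1\colon E_1^{-1,1}\to E_1^{0,1}$ is the map $R^1\pi_\ast\mathfrak{S}^{-1}_{X/T}(\mathcal{P})\to R^1\pi_\ast\mathfrak{S}^0_{X,\pi,S}(\mathcal{P})$, which vanishes by Lemma \ref{lem:obviousbutimp}(iii). The incoming arrow $d_1\colon E_1^{-2,1}\to E_1^{-1,1}$ is $R^1\pi_\ast$ of the differential $\mathcal{O}_X=\mathfrak{S}^{-2}\to\mathfrak{S}^{-1}_{X/T}(\mathcal{P})$; since the subcomplex in that range is $\Omega^{\bullet}_{X/T}[2]$, this differential factors as $\mathcal{O}_X\xrightarrow{d}\omega_{X/T}\hookrightarrow\mathfrak{S}^{-1}_{X/T}(\mathcal{P})$, so the induced arrow factors through the Hodge--de Rham differential $R^1\pi_\ast\mathcal{O}_X\to R^1\pi_\ast\omega_{X/T}$, which is zero by the $E_1$-degeneration of the Hodge--de Rham spectral sequence for the smooth projective family $\pi$. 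Hence $E_2^{-1,1}=R^1\pi_\ast\mathfrak{S}^{-1}_{X/T}(\mathcal{P})$ and $E_2^{0,0}=0$. No higher differential can touch the antidiagonal (the only candidate $d_2\colon E_2^{-1,1}\to E_2^{1,0}$ lands in $0$), so $R^0\pi_\ast\mathfrak{S}^{\bullet}_{X,\pi,S}(\mathcal{P})\cong E_\infty^{-1,1}=R^1\pi_\ast\mathfrak{S}^{-1}_{X/T}(\mathcal{P})$.

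Finally I would check that this isomorphism is the identity on $\mathcal{O}_T$ and on $\mathcal{T}_{T/S}$. The right-hand side is a relative Atiyah algebra via $0\to R^1\pi_\ast\Omega_{X/T}\to R^1\pi_\ast\mathfrak{S}^{-1}_{X/T}(\mathcal{P})\to R^1\pi_\ast\gfrak_{\mathcal{P}}\to0$, where $R^1\pi_\ast\Omega_{X/T}\cong\mathcal{O}_T$ via the Serre duality trace and $R^1\pi_\ast\gfrak_{\mathcal{P}}\cong\mathcal{T}_{T/S}$ by \eqref{eqn:natural}. The left-hand side is a relative Atiyah algebra via the long exact sequence of $0\to\Omega^{\bullet}_{X/T}[2]\to\mathfrak{S}^{\bullet}_{X,\pi,S}(\mathcal{P})\to\mathfrak{R}^{\bullet}(\mathcal{P})\to0$ together with $\mathfrak{R}^{\bullet}(\mathcal{P})\simeq\pi^{-1}\mathcal{T}_{T/S}$, giving $0\to R^2\pi_\ast\Omega^{\bullet}_{X/T}\to R^0\pi_\ast\mathfrak{S}^{\bullet}_{X,\pi,S}(\mathcal{P})\to\mathcal{T}_{T/S}\to0$ with $R^2\pi_\ast\Omega^{\bullet}_{X/T}\cong\mathcal{O}_T$. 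The quotient $\mathcal{T}_{T/S}$ agrees on both sides because both symbol maps are induced by the projection $\mathfrak{S}^{\bullet}_{X,\pi,S}(\mathcal{P})\to\mathfrak{R}^{\bullet}(\mathcal{P})$ followed by the Kodaira--Spencer identification \eqref{eqn:natural}. The sub-object $\mathcal{O}_T$ agrees once one checks that the de Rham realization $R^2\pi_\ast\Omega^{\bullet}_{X/T}$ and the Hodge realization $R^1\pi_\ast\omega_{X/T}$ of the central $\mathcal{O}_T$ coincide under the trace, which follows from the degeneration edge isomorphism $\mathbb{H}^2(\Omega^{\bullet}_{X/T})\cong R^1\pi_\ast\omega_{X/T}$. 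I expect this final matching of the two copies of $\mathcal{O}_T$ --- rather than any of the vanishing statements, which are immediate from Lemma \ref{lem:obviousbutimp} and Hodge-theoretic degeneration --- to be the main obstacle, since it is where the precise compatibility of the trace normalization entering the subcomplex $\Omega^{\bullet}_{X/T}[2]$ with the Serre duality trace must be pinned down.
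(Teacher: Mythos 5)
Your proof is correct, and at bottom it is the paper's own computation: the authors take the fine resolutions $\Lcal_\bullet,\Mcal_\bullet,\Ncal_\bullet$ and compute $\ker D_0/\operatorname{im}D_{-1}$ of the total complex by hand, which is exactly your spectral sequence unwound. Step for step: their normalization ``take $\ell_2=0$'' via Lemma \ref{lem:obviousbutimp}(i) is your $E_1^{-2,2}=R^2\pi_\ast\Ocal_X=0$; their assignment $(0,m_1,n_0)\mapsto[m_1]$ is your edge map onto $E_\infty^{-1,1}$; surjectivity (existence of $n_0$ with $dn_0=gm_1$) is the vanishing of $d_1\colon E_1^{-1,1}\to E_1^{0,1}$ from Lemma \ref{lem:obviousbutimp}(iii); and injectivity (unique determination of $n_0$) is $R^0\pi_\ast\mathfrak{S}^0_{X,\pi,S}(\Pcal)=0$, i.e.\ your $E_1^{0,0}=0$, which you derive --- as the paper implicitly does from (ii) and (iii) --- from the assumption that \eqref{eqn:natural} is an isomorphism, via the connecting map of $0\to\gfrak_{\Pcal}\to\mathfrak{S}^0_{X,\pi,S}(\Pcal)\to\pi^{-1}\Tcal_{T/S}\to0$. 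Where you genuinely add something is the incoming differential $d_1\colon E_1^{-2,1}\to E_1^{-1,1}$, which the paper's proof never mentions but actually needs: after normalizing $\ell_2=0$, a representative is still ambiguous by $D_{-1}(\ell_1,m_0)$ with $d\ell_1=0$, which shifts $[m_1]$ by the image of a class in $R^1\pi_\ast\Ocal_X$ under $f$, so the paper's map $(0,m_1,n_0)\mapsto[m_1]$ is well defined only once $R^1\pi_\ast\Ocal_X\to R^1\pi_\ast\mathfrak{S}^{-1}_{X/T}(\Pcal)$ is known to vanish. Your observation that this map factors through the Hodge--de Rham differential $R^1\pi_\ast\Ocal_X\to R^1\pi_\ast\omega_{X/T}$, which is zero by degeneration for a smooth projective family of curves, closes this real (if standard) gap. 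Similarly, the clause about the isomorphism being the identity on $\Ocal_T$ and $\Tcal_{T/S}$ is asserted without argument in the paper; your matching of the quotients through $\mathfrak{R}^\bullet(\Pcal)\simeq\pi^{-1}\Tcal_{T/S}$ and of the two realizations of $\Ocal_T$ via the edge isomorphism $R^0\pi_\ast\Omega^\bullet_{X/T}[2]\cong R^1\pi_\ast\omega_{X/T}$ is the correct diagram chase, and your instinct that this trace compatibility is the delicate point is sound. In short: same decomposition and same key lemma, but your version is more careful at exactly the two places where the paper's sketch is silent.
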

\begin{proof}
	Let $(\ell_2,m_1,n_0)\in \ker D_0$. By Lemma \ref{lem:obviousbutimp}(i) we may take $\ell_2=0$.
	Hence, $(0,m_1,n_0)\mapsto (0, dm_1, dn_0-gm_1)=(0,0,0).$
	So $m_1$ defines a class in $R^1\pi_\ast(\mathfrak{S}_{X/T}^{-1}(\mathcal{P}))$. The second
	condition says that $gm_1$ defines the zero class in
	$R^1\pi_\ast(\pi^{-1}\mathcal{T}_{T/S})$. But by Part (iii) of Lemma \ref{lem:obviousbutimp} this is automatic. 
	Now $n_0$ is such that $dn_0=gm_1$. 
	By (ii)  and (iii) of Lemma \ref{lem:obviousbutimp},  we have $R^0\pi_\ast\mathfrak{S}_{X,\pi,S}^0(\mathcal{P})=\{0\}$. This means that
	$n_0$ is uniquely determined. Hence, the hypercohomology gives
	$R^1\pi_\ast\mathfrak{S}_{X/T}^{-1}(\mathcal{P})$. 
\end{proof}

Pick a representation $\phi: \gfrak \rightarrow \mathfrak{sl}_r$, and
consider the adjoint 
$\ad : \mathfrak{sl}_r \rightarrow \mathfrak{sl}(\mathfrak{sl}_r)$. 
This gives a map of the corresponding simply connected groups, and now the 
corresponding associated construction first via $\phi$ gives a vector 
bundle $\mathcal{E}_{\phi}$ and then taking the adjoint we have the bundle 
$\operatorname{End}(\mathcal{E}_{\phi})$. 
Assuming the condition stated in (iii) of Lemma \ref{lem:obviousbutimp}, 
we have the following proposition,  which is a generalization of  
results in \cite[Proposition 5.0.2]{BBMP20} and \cite{ST}:

\begin{proposition}\label{prop:today2}
There is a natural isomorphism between
$R^0\pi_{*}\mathcal{B}^{\bullet}_{X,\pi,S}(End_0(\mathcal{E}_{\phi}))$
	of the Bloch-Esnault complex  and 
	$R^{0}\pi_{*}\mathfrak{S}_{X,\pi,S}^{\bullet}(\mathcal{P})$ of the
	relative Ginzburg complex, and the isomorphism fits in the following diagram: 
	\begin{equation}
	\begin{tikzcd}
	0 \arrow[r] &
	R^0\pi_*\Omega^{\bullet}_{X/T}[2]\cong\mathcal{O}_T\arrow[r]\arrow
	["m_\phi\cdot 2r"]{d} &R^{0}\pi_{*}\mathfrak{S}^{\bullet}_{X,\pi,S}(\mathcal{P})\arrow[r]\arrow["\cong"]{d} & \mathcal{T}_{T/S} \arrow[r]\arrow["-\id"]{d} &0\\
	0 \arrow[r] & R^0\pi_*\Omega^{\bullet}_{X/T}[2]\cong\mathcal{O}_T\arrow[r] &R^{0}\pi_{*}\mathcal{B}^{\bullet}_{X,\pi,S}(\operatorname{End}_0(\mathcal{E}_{\phi}))\arrow{r} & \mathcal{T}_{T/S} \arrow[r] &0.
	\end{tikzcd}
	\end{equation}
\end{proposition}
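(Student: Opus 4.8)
The plan is to obtain this as an instance of the comparison of Section~\ref{sec:comparison}, applied not to $\phi$ itself but to its composite with the adjoint representation, and then to combine it with Proposition~\ref{prop:old1}. The first step is to identify the input. The vector bundle $\operatorname{End}_0(\mathcal{E}_\phi)$ is precisely the bundle $\mathcal{E}_\Phi\,=\,\mathcal{P}\times_G\slfrak_r$ associated to $\mathcal{P}$ through the representation $\Phi\,:=\,\Ad\circ\phi\,:\,G\,\longrightarrow\,\SL(\slfrak_r)$, whose differential is $\Phi_\ast\,=\,\ad\circ\phi_\ast\,:\,\gfrak\,\longrightarrow\,\mathfrak{sl}(\slfrak_r)$. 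As $\gfrak$ is simple and $\phi_\ast\neq 0$, the map $\Phi_\ast$ is again a nonzero homomorphism of simple Lie algebras, so the entire apparatus of Sections~\ref{sec:comparison} and~\ref{sec:determinant} applies to $\Phi$, and the Bloch--Esnault complex attached to $\operatorname{End}_0(\mathcal{E}_\phi)$ is the one produced by $\Phi$.

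I would then set up the two sides as Atiyah algebras over $\mathcal{T}_{T/S}$. On the Ginzburg side, Proposition~\ref{prop:old1} gives $R^0\pi_\ast\mathfrak{S}^\bullet_{X,\pi,S}(\mathcal{P})\,\simeq\,R^1\pi_\ast\mathfrak{S}^{-1}_{X/T}(\mathcal{P})$, compatibly with the extension by $\mathcal{O}_T$ and with the quotient, the latter being $R^1\pi_\ast\gfrak_{\mathcal{P}}$ identified with $\mathcal{T}_{T/S}$ via the Kodaira--Spencer isomorphism of Lemma~\ref{lem:obviousbutimp}(iii). On the Bloch--Esnault side, $\mathcal{B}^\bullet_{X,\pi,S}(\operatorname{End}_0(\mathcal{E}_\phi))$ is by construction an $\Omega^\bullet_{X/T}[2]$-extension whose degree-$0$ term is anchored to $\pi^{-1}\mathcal{T}_{T/S}$, so by the Beilinson--Schechtman and Bloch--Esnault theory recalled above its $R^0\pi_\ast$ is the Atiyah algebra $\operatorname{At}_{T/S}(\mathcal{L}^{-1})$, an extension of $\mathcal{T}_{T/S}$ by $\mathcal{O}_T$. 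The comparison between the two is supplied by Proposition~\ref{prop:veryimportant} for $\Phi$, which produces $\widehat{\Phi}$ on the degree-$(-1)$ sheaves and, after applying $R^1\pi_\ast$, a morphism of short exact sequences that is multiplication by the Dynkin index $m_\Phi$ on $\mathcal{O}_T$.

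It remains to pin down the two outer maps. For the left one, the Dynkin index of the adjoint representation of $\slfrak_r$ equals twice the dual Coxeter number $h^\vee(\slfrak_r)=r$, so multiplicativity of the index under composition gives $m_\Phi\,=\,m_\phi\cdot 2r$, matching the scalar in the asserted diagram. For the right one, the degree-$0$ component of $\widehat{\Phi}$ covers the identity of $\pi^{-1}\mathcal{T}_{T/S}$ (the anchors to the relative tangent sheaf coincide), and functoriality of the associated-bundle construction makes $\Phi_\ast$ compatible with both Kodaira--Spencer maps, i.e.\ $\Phi_\ast\circ\mathrm{KS}_{\mathcal{P}}$ equals the deformation map of $\mathcal{P}\times_G\SL(\slfrak_r)$; hence the representation-dependent arrow $-\Phi_\ast$ of Proposition~\ref{prop:veryimportant} collapses to $-\id$ once both quotients are identified with $\mathcal{T}_{T/S}$. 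With the two outer vertical maps thus isomorphisms --- multiplication by the nonzero scalar $m_\phi\cdot 2r$ on $\mathcal{O}_T$, and $-\id$ on $\mathcal{T}_{T/S}$ --- the five lemma forces the middle arrow to be an isomorphism.

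I expect the main obstacle to be this last identification of the right-hand quotient. The point is that $R^1\pi_\ast$ of the degree-$(-1)$ Bloch--Esnault sheaf surjects onto the \emph{full} deformation space $R^1\pi_\ast\operatorname{End}_0(\mathcal{E}_\Phi)$ of the rank-$(r^2-1)$ bundle, which is strictly larger than $\mathcal{T}_{T/S}$; only after passing to the relative complex $\mathcal{B}^\bullet_{X,\pi,S}$, whose degree-$0$ term is anchored to $\pi^{-1}\mathcal{T}_{T/S}$, does the quotient collapse onto the moduli directions. Checking that this collapse is compatible with $\widehat{\Phi}$ and converts $-\Phi_\ast$ into $-\id$, with the correct sign, is the crux of the argument; the Dynkin-index bookkeeping of the preceding paragraph is routine by comparison.
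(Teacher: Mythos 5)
Your proposal follows essentially the same route as the paper's proof: the paper likewise treats $\operatorname{End}_0(\mathcal{E}_\phi)$ as the bundle associated to the composite $\Ad\circ\phi\,:\,G\to\SL(\slfrak_r)$, applies Proposition \ref{prop:veryimportant} (together with the map of Atiyah algebras from \eqref{eqn:atiyah}) to produce a morphism of $\Omega^{\bullet}_{X/T}[2]$-extensions that is $m_\phi\cdot 2r$ on the left and $-\id$ on $\pi^{-1}\mathcal{T}_{T/S}$, and then takes the pushforward. Your explicit bookkeeping --- the Dynkin-index multiplicativity $m_{\Ad\circ\phi}=m_\phi\cdot 2r$ via $m_{\Ad}=2h^{\vee}=2r$, the appeal to Proposition \ref{prop:old1}, and the short five lemma forcing the middle arrow to be an isomorphism --- merely makes explicit steps the paper leaves implicit, and your flagged ``crux'' about collapsing the quotient onto $\pi^{-1}\mathcal{T}_{T/S}$ is resolved exactly as in the paper, by the anchoring of the degree-$0$ terms of both relative complexes to $\pi^{-1}\mathcal{T}_{T/S}$.
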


\begin{proof}
The composition of maps
$\ad \circ \phi\,:\, \mathfrak{g}\,\longrightarrow\, \mathfrak{sl}(\mathfrak{sl}_r)$ 
gives a  map $\At_{X/T}(\mathcal{P})\,\longrightarrow\,
\At_{X/T}(\operatorname{End}_0(\mathcal{E}_{\phi}))$ (see \eqref{eqn:atiyah} and \cite{Atiyahconn}).
Moreover, the representation also gives the map
$\mathfrak{S}_{X/T}^{-1}(\mathcal{P})\,\longrightarrow\,
\mathcal{B}^{-1}_{0,X/T}(\operatorname{End}(\mathcal{E}_{\phi}))$ (see Proposition 
\ref{prop:veryimportant}). These two together give a map between the complexes 
	\begin{equation}
	\begin{tikzcd}
	0 \arrow{r}& \Omega^{\bullet}_{X/T}[2] \arrow{r} \arrow["m_{\phi}\cdot 2r"]{d}&\mathfrak{S}_{X,\pi,S}^{\bullet}(\mathcal{P}) \arrow{d}\arrow{r} &  \pi^{-1}\mathcal{T}_{T/S} \arrow{r}\arrow["-\id"]{d}& 0\\
	0 \arrow{r}& \Omega^{\bullet}_{X/T}[2] \arrow{r}  &\mathcal{B}^{\bullet}_{X,\pi,S}(\operatorname{End}(\mathcal{E}_{\phi})) \arrow{r} &  \pi^{-1}\mathcal{T}_{T/S} \arrow{r}& 0.\\
	\end{tikzcd}
	\end{equation}
Taking pushforward $R^1\pi_*$ of the above,  we obtain the  desired result. 
\end{proof}

Now assume that $R^1\pi_*\gfrak_{\mathcal{P}}$ is isomorphic to $\mathcal{T}_{T/S}$ under the natural map
in \eqref{eqn:natural}. Consequently, combining Propositions \ref{prop:old1} and \ref{prop:today2}, we have the following diagram 
\begin{equation}
\begin{tikzcd}
0\arrow{r}& R^1\Omega_{X/T}\cong \mathcal{O}_T \arrow[equal]{d}\arrow{r} & R^1\pi_*\mathfrak{S}_{X/T}^{-1}(\mathcal{P}) \arrow{r} \arrow["\cong"]{d}& \mathcal{T}_{T/S}\arrow{r}\arrow[equal]{d}& 0 \\
0 \arrow[r] & R^0\pi_*\Omega^{\bullet}_{X/T}[2]\cong\mathcal{O}_T\arrow[r]\arrow ["m_\phi.2r"]{d} &R^{0}\pi_{*}\mathfrak{S}_{X,\pi,S}^{\bullet}(\mathcal{P})\arrow[r]\arrow["\cong"]{d} & \mathcal{T}_{T/S} \arrow[r]\arrow["-\id"]{d} &0\\0 \arrow[r] & R^0\pi_*\Omega^{\bullet}_{X/T}[2]\cong\mathcal{O}_T\arrow[r] &R^{0}\pi_{*}\mathcal{B}_{X,\pi,S}^{\bullet}(\operatorname{End}_0(\mathcal{E}_{\phi}))\arrow{r} & \mathcal{T}_{T/S} \arrow[r]\arrow[equal]{d} &0\\
0 \arrow[r] & \mathcal{O}_T\arrow[equal]{u} \arrow{r} &
R^0\pi_*({}^{tr} \mathcal{A}_{X,\pi,S}(\operatorname{End}_0(\mathcal{E}_{\phi}))^{\bullet})
\arrow["\cong"]{u}\arrow{r} & 
\mathcal{T}_{T/S}\arrow [r]\arrow[equal]{d}&0\\
0 \arrow[r] & \mathcal{O}_T\arrow[equal]{u} \arrow{r} & \operatorname{At}_{T/S}(\mathcal{L}^{-\otimes 2r}_{\phi})\arrow["\cong"]{u}\arrow{r} & \mathcal{T}_{T/S}\arrow [r]&0.
\end{tikzcd}
\end{equation}
The isomorphism between the third  and fourth rows is due to Bloch-Esnault
\cite{BE}, and the isomorphism between the fourth and fifth rows is due to Beilinson-Schechtman \cite{BS88}. 
Here, $\mathcal{L}_{\phi}$ is the determinant of cohomology associated to the family $\mathcal{E}_{\phi}$.

We have the following theorem under the assumption that $\mathcal{T}_{T/S}\,\cong\,
R^1\pi_*\gfrak_{\mathcal{P}}$ for the map in \eqref{eqn:natural}.

\begin{theorem}\label{thm:atiyahG}
The relative Atiyah sequence for 
$\frac{1}{m_{\phi}}\At_{T/S}(\mathcal{L}_{\phi})$ is isomorphic to 
\begin{equation}
0 \,\lra\,\mathcal{O}_{M^{rs}_{G}}\,\lra\,
R^1\pi_{n, \ast}(\Sfrak^{-1}_{X/T}(\mathcal{P}))\,\lra\, 
\mathcal{T}_{T/S}\,\lra\, 0\ .
\end{equation}
\end{theorem}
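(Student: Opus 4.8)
The plan is to extract the theorem directly from the five-row commutative diagram assembled immediately above its statement: essentially all of the geometric input has already been packaged there, so the remaining work is the bookkeeping that converts the scalar vertical arrows into a single relation between extension classes in $\Ext^1(\mathcal{T}_{T/S},\mathcal{O}_T)$.

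First I would record the chain of identifications furnished by that diagram. By Proposition~\ref{prop:old1} the top two rows are isomorphic as Atiyah algebras via the identity on both $\mathcal{O}_T$ and $\mathcal{T}_{T/S}$, so that
$$R^1\pi_*\mathfrak{S}^{-1}_{X/T}(\mathcal{P})\,\cong\,R^0\pi_*\mathfrak{S}^{\bullet}_{X,\pi,S}(\mathcal{P}).$$
The isomorphism of the third and fourth rows (Bloch--Esnault \cite{BE}) and of the fourth and fifth rows (Beilinson--Schechtman \cite{BS88}) likewise identify everything below the second row with $\At_{T/S}(\mathcal{L}_{\phi}^{-\otimes 2r})$, again via the identity on the two extreme terms. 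The only non-trivial vertical map is the one of Proposition~\ref{prop:today2} joining the second and third rows, which is multiplication by $m_{\phi}\cdot 2r$ on $\mathcal{O}_T$ and by $-\id$ on $\mathcal{T}_{T/S}$.

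Second I would pass to extension classes. Two standard facts drive the computation. The assignment $L\mapsto[\At_{T/S}(L)]$ is additive in $L$ (cf.\ \cite{Atiyahconn}), so that
$$[\At_{T/S}(\mathcal{L}_{\phi}^{-\otimes 2r})]\,=\,-2r\,[\At_{T/S}(\mathcal{L}_{\phi})];$$
and a morphism of extensions of $\mathcal{T}_{T/S}$ by $\mathcal{O}_T$ that is multiplication by $c$ on $\mathcal{O}_T$ and by $d$ on $\mathcal{T}_{T/S}$ forces $c\cdot(\text{source class})=d\cdot(\text{target class})$. Applying the latter to the composite morphism from the top row $R^1\pi_*\mathfrak{S}^{-1}_{X/T}(\mathcal{P})$ down to $\At_{T/S}(\mathcal{L}_{\phi}^{-\otimes 2r})$, with $c=2r\,m_{\phi}$ and $d=-1$, yields
$$2r\,m_{\phi}\,[R^1\pi_*\mathfrak{S}^{-1}_{X/T}(\mathcal{P})]\,=\,(-1)\cdot(-2r)\,[\At_{T/S}(\mathcal{L}_{\phi})]\,=\,2r\,[\At_{T/S}(\mathcal{L}_{\phi})].$$
Cancelling the nonzero integer $2r$ gives $m_{\phi}\,[R^1\pi_*\mathfrak{S}^{-1}_{X/T}(\mathcal{P})]=[\At_{T/S}(\mathcal{L}_{\phi})]$, which is exactly the asserted isomorphism of Atiyah sequences.

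I expect the only real subtlety to be the sign and scalar accounting: the factor $-1$ on $\mathcal{T}_{T/S}$ coming from Proposition~\ref{prop:today2} must cancel against the factor $-2r$ produced by passing from $\mathcal{L}_{\phi}$ to its dual $2r$-th tensor power, leaving precisely $m_{\phi}$; and the $2r$ (present because $\End_0(\mathcal{E}_{\phi})$ enters through the adjoint of $\phi$ and the determinant contributes the weight $2r$) must drop out cleanly. I would also verify that the final identification is one of Atiyah algebras and not merely of the underlying extensions—this is inherited from the fact that every arrow in the five-row diagram was built as a morphism of Atiyah algebras—and note that $\tfrac{1}{m_{\phi}}\At_{T/S}(\mathcal{L}_{\phi})$ is unambiguous because the torsor of relative Atiyah algebras is modeled on a $\mathbb{C}$-vector space, in which division by $m_{\phi}\neq 0$ is well defined.
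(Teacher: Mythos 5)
Your proposal is correct and takes essentially the paper's own route: the paper offers no separate proof of this theorem beyond the five-row diagram assembled just before it, and your bookkeeping—composing the vertical maps to get $2r\,m_{\phi}$ on $\mathcal{O}_T$ and $-\id$ on $\mathcal{T}_{T/S}$, then cancelling against the class $-2r\,[\At_{T/S}(\mathcal{L}_{\phi})]$ of the bottom row $\At_{T/S}(\mathcal{L}_{\phi}^{-\otimes 2r})$ to obtain $m_{\phi}\,[R^1\pi_{*}\Sfrak^{-1}_{X/T}(\mathcal{P})]=[\At_{T/S}(\mathcal{L}_{\phi})]$—is exactly the intended reading. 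Your closing observations, that the identification is one of Atiyah algebras because every arrow in the diagram is a morphism of Atiyah algebras, and that $\tfrac{1}{m_{\phi}}\At_{T/S}(\mathcal{L}_{\phi})$ is well defined via pushout along multiplication by $\tfrac{1}{m_{\phi}}$, are consistent with the Beilinson--Schechtman conventions the paper uses.
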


This justifies the computation of the relative extension class of the short exact
sequence via Dolbeault methods in Theorem \ref{thm:atiyah-bott}.

\subsection{Associated bundles and pullback} \label{sec:assoc-det}

In this section we discuss the relation between the relative Ginzburg 
complex for the moduli space $M_{G}$ and the pull-back 
of Bloch-Esnault complex of $M_{\SL_r}$ associated to a 
representation $\phi\,:\, G\,\hookrightarrow\, \SL_r$.
As previously, let $M^{rs}_{G}$  denote the moduli space parametrizing the
regularly stable principal $G$-bundles,  and  denote by $M^s_{\SL_r}$ the moduli space of
stable rank $r$ vector bundles of trivial determinant.
Let $\phi\,:\,G\,\hookrightarrow\,\SL_r$ be such that $G$ is not contained in 
any proper parabolic subgroup of $\SL_r$. With
this condition, we know from \cite{Bisstable} that a stable
principal ${G}$-bundle $P$ produces a stable $\SL_r$ bundle.

Consider the map $\phi\,:\,M^{rs}_{G}\,\longrightarrow\, M^s_{\SL_r}$
taking $[P]$ to $[\Ecal_\phi]$, where $\Ecal_\phi\,=\,P\times_\phi\CBbb^r$ is
the associated vector bundle.
Let $\Escr\to M^{s}_{\SL_r}\times_S \Ccal$ and $\Pscr\to M^{rs}_G\times_S \Ccal$  be the
universal bundles (since they exist on a covering by \'etale open subsets,
we can treat as if they exist). Then the associated vector bundle 
$\Escr_\phi\,=\,\Pscr\times_\phi\CBbb^r$ is the pull-back of $\Escr$ via $\phi$. 
We have the following diagram of maps:
\begin{equation}
\begin{tikzcd}
\Escr_\phi \arrow[rr]\arrow[d]&& \Escr \arrow[d]&\\
\mathfrak{X}_{G}:=M^{rs}_{G}\times_{S}\Ccal \arrow[rr,"f"]
\arrow[rd, "\pi_{n,G}"]\arrow[dd,"\pi_{w,{G}}"]&& \mathfrak{X}_{\SL_r}:=
M^s_{\SL_r}\times_S \Ccal \arrow[rd, "\pi_{n,\SL_r}"]\arrow[dd, "\pi_{w,\SL_r}" near start]&\\
&M^{rs}_{G}\arrow[rr,"\phi", crossing over, near start]&& M^s_{\SL_r}\\
\Ccal \arrow[rr, equal]\arrow[rd, "\pi_{s,G}"] && \Ccal \arrow[rd, "\pi_{s,\SL_r}"] & \\
&S \arrow[rr, equal]
\arrow[from=uu, crossing over,"\pi_{e,G}" near end]&& S\arrow[from=uu, crossing over, "\pi_{e,\SL_r}"].
\end{tikzcd}
\end{equation}
Recall that we have the following commutative diagram on $M^s_{\SL_r}$ that connects the Atiyah algebra
of the determinant bundle with the Atiyah algebra defined by Bloch-Esnault \cite{BE}
\begin{equation}
\begin{tikzcd}0 \arrow[r] &   \Omega_{\mathfrak{X}_{\SL_r}/M^s_{\SL_r}}
\arrow[r] \arrow[d, equal]&
\operatorname{At}_{\mathfrak{X}_{\SL_r}/M^s_{\SL_r}}(\Escr)^{\vee} \arrow[r] & \End_0(\Escr)^{\vee} \arrow[r] &0\\
0 \arrow[r] &   \Omega_{\mathfrak{X}_{\SL_r}/M^s_{\SL_r}} \arrow[r] &
\Bcal_{_{\mathfrak{X}_{\SL_r}/M^s_{\SL_r}}}^{-1}(\Escr) \arrow[r] \arrow[u, "\cong"]& \End_0(\Escr) \arrow[r]\arrow[u,  "-\nu_{\mathfrak{sl}(r)}", "\cong"']&0.\\
\end{tikzcd}
\end{equation}

Taking pushforward and combining the results of Beilinson-Schechtman \cite{BS88},
Bloch-Esnault, \cite{BE}, and Baier-Bolognesi-Martens-Pauly, \cite{BBMP20}, we get the following
commutative diagram of maps
\begin{equation*} 
\begin{tikzcd}
0 \arrow[r] &  R^1\pi_{n,\SL_r*}  \Omega_{\mathfrak{X}_{\SL_r}/M^s_{\SL_r}}
\arrow[r] \arrow[d, equal]&
R^1\pi_{n,\SL_r*}(\operatorname{At}_{\mathfrak{X}_{\SL_r}/M^s_{\SL_r}}(\Escr)^{\vee})
\arrow[r] & R^1\pi_{n,\SL_r*}(\End_0(\Escr)^{\vee}) \arrow[r] &0\\
0 \arrow[r] &  R^1\pi_{n,\SL_r*}  \Omega_{\mathfrak{X}_{\SL_r}/M^s_{\SL_r}}
\arrow[r] & R^1\pi_{n,\SL_r*}(\Bcal_{0,{\mathfrak{X}_{\SL_r}/M^s_{\SL_r}}}^{-1}(\Escr))
\arrow[r] \arrow[u, "\cong"]& R^1\pi_{n,\SL_r*}(\End_0(\Escr)) \arrow[r]\arrow[u,  "-\nu_{\mathfrak{sl}(r)}", "\cong"']&0\\
0 \arrow[r] &
\mathcal{O}_{M^s_{\SL_r}}\arrow[r]\arrow[u,"\cong"]&\operatorname{At}_{M^s_{\SL_r}/S}(\mathcal{L}^{-1})\arrow[r]\arrow[u,"\cong"]
& \mathcal{T}_{M^s_{\SL_r}/S}\arrow[r]\arrow[u,equal] &0
\end{tikzcd}
\end{equation*}
where $\mathcal{L}\,=\,\det (R\pi_{n*}\Escr)$ is the
determinant of cohomology of the family $\Escr_{\phi}$.
Pulling back the exact sequence at the bottom of the diagram by the
map  $\phi\,:\, M^{rs}_{G}\,\longrightarrow\, M^s_{\SL_r}$, we get the exact sequence
\begin{equation*}
0\lra\mathcal{O}_{M^{rs}_G}
\lra\operatorname{At}_{M^{rs}_G/S}(\phi^*(\mathcal{L}))
\lra\mathcal{T}_{M^{rs}_{G}/S} \lra0.
\end{equation*}
We wish to connect the following two
exact sequences
\begin{equation}\label{eqn:labelphi}
0 \lra \phi^*(R^1\pi_{n,\SL_r*}
\Omega_{\mathfrak{X}_{\SL_r}/M_{\SL_r}}) \lra
\phi^{*}(R^1\pi_{n,\SL_r*}(\Bcal_{0,{\mathfrak{X}_{\SL_r}/M^s_{\SL_r}}}^{-1}(\Escr)))
\lra\phi^* R^1\pi_{n,\SL_r*}(\End_0(\Escr))\lra 0,
\end{equation}
$$
0 \,\lra\,\mathcal{O}_{M^{rs}_G}\,\cong\, R^1\pi_{n*}
\Omega_{\mathfrak{X}_G/M^{rs}_G} \,\lra\,
R^1\pi_{n*}(\Sfrak^{-1}_{\mathfrak{X}_{\SL_r}/M^s_{\SL_r}}(\Pscr)) \,\lra\,\mathcal{T}_{M^{rs}_G/S} \,\lra\, 0.$$

By Proposition \ref{prop:veryimportant},
it is enough to relate \eqref{eqn:labelphi} with the following (see \eqref{eqn:veryimp}):
\begin{equation*}
0 \,\lra\, \mathcal{O}_{M^{rs}_G} \,\lra\,
R^1\pi_{n,G\, \ast}(\Bcal_{0,{\mathfrak{X}_{\SL_r}/M^s_{\SL_r}}}^{-1}(\Escr_\phi))
\,\lra\, R^1\pi_{n*}(\End_0(\Escr_\phi))\,\lra\, 0.
\end{equation*}

\begin{proposition}\label{conj:firstone}
There is an isomorphism 
$$R^1\pi_{n,G\, \ast}(\mathcal{B}_{0,\mathfrak{X}_{\SL_r}/M^s_{\SL_r}}^{-1}(\Escr_\phi))\,\,\simeq\,\,
	\phi^*(R^1\pi_{n, \SL_r*}(\mathcal{B}_{0,\mathfrak{X}_{\SL_r}/M^s_{\SL_r}}^{-1}(\Escr)))\ .$$
\end{proposition}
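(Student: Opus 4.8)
The plan is to deduce the isomorphism from two independent facts: that the traceless Bloch--Esnault complex is compatible with the base change $f\,:\,\mathfrak{X}_G\,\to\,\mathfrak{X}_{\SL_r}$, and that the top relative direct image $R^1$ commutes with the base change $\phi\,:\,M^{rs}_G\,\to\,M^s_{\SL_r}$. First I would record the geometry. By construction $\Escr_\phi\,=\,f^*\Escr$, and the square
\begin{equation*}
\begin{tikzcd}
\mathfrak{X}_G \arrow[r, "f"]\arrow[d, "\pi_{n,G}"'] & \mathfrak{X}_{\SL_r}\arrow[d, "\pi_{n,\SL_r}"]\\
M^{rs}_G \arrow[r, "\phi"] & M^s_{\SL_r}
\end{tikzcd}
\end{equation*}
is Cartesian, because both horizontal families arise by fiber product with $\Ccal$ over $S$ and $f\,=\,\phi\times\id_\Ccal$. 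In particular $f^*\Omega_{\mathfrak{X}_{\SL_r}/M^s_{\SL_r}}\,\simeq\,\Omega_{\mathfrak{X}_G/M^{rs}_G}$, and the relative diagonal $\Delta_G\,\subset\,\mathfrak{X}_G\times_{M^{rs}_G}\mathfrak{X}_G$ is the scheme-theoretic preimage of $\Delta_{\SL_r}$ under the induced map $f\times_\phi f$ of relative squares.

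Next I would show that $\Bcal_{0,\mathfrak{X}_G/M^{rs}_G}^{-1}(\Escr_\phi)\,\simeq\, f^*\Bcal_{0,\mathfrak{X}_{\SL_r}/M^s_{\SL_r}}^{-1}(\Escr)$. Recall that $\widetilde\Bcal^{-1}_{X/T}(\Ecal)\,=\,(\Ecal\boxtimes\Ecal'(\Delta))/(\Ecal\boxtimes\Ecal'(-\Delta))$ is assembled on $X\times_T X$ from the external product, the diagonal, and the twists $\Ocal(\pm\Delta)$, and that $\Bcal^{-1}_{0,X/T}(\Ecal)$ is then obtained from its traceless part by the trace pushout appearing in \eqref{eqn:3rowdiagram}. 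Each of these operations is compatible with the Cartesian base change $f\times_\phi f$: the external product satisfies $(f\times_\phi f)^*(\Escr\boxtimes\Escr')\,\simeq\,\Escr_\phi\boxtimes\Escr_\phi'$ since $f^*\Omega$ pulls back correctly; the sheaves $\Ocal(\pm\Delta_{\SL_r})$ pull back to $\Ocal(\pm\Delta_G)$ because $\Delta_G$ is the preimage of $\Delta_{\SL_r}$; and the trace map for $\Escr_\phi\,=\,f^*\Escr$ is the pullback of the trace map for $\Escr$. As all sheaves in sight are locally free, the pullback is exact and preserves the short exact sequences defining $\widetilde\Bcal^{-1}$ and its pushout, so the desired isomorphism propagates through the quotient and through \eqref{eqn:3rowdiagram}.

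Finally I would invoke cohomology and base change for the Cartesian square. Since $\pi_{n,\SL_r}$ is a proper smooth family of relative dimension one, $R^1$ is the top nonvanishing relative direct image, and for any sheaf flat over $M^s_{\SL_r}$ the base change homomorphism for the top direct image is surjective, hence an isomorphism; local freeness of $\Bcal^{-1}_{0,\mathfrak{X}_{\SL_r}/M^s_{\SL_r}}(\Escr)$ over $\mathfrak{X}_{\SL_r}$ guarantees the required flatness. Applying this gives
\begin{equation*}
R^1\pi_{n,G*}\bigl(f^*\Bcal^{-1}_{0,\mathfrak{X}_{\SL_r}/M^s_{\SL_r}}(\Escr)\bigr)\,\simeq\,\phi^*\bigl(R^1\pi_{n,\SL_r*}\Bcal^{-1}_{0,\mathfrak{X}_{\SL_r}/M^s_{\SL_r}}(\Escr)\bigr),
\end{equation*}
which combined with the previous step yields the claim.

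The step I expect to be the main obstacle is the base-change compatibility of the complex on the relative square $X\times_T X$: one must check that the external tensor product, the diagonal together with its twists, and the trace pushout all commute with $f\times_\phi f$ at the level of the defining short exact sequences, and that the identification $\Ocal_{X\times_T X}(\Delta)\vert_\Delta\,\simeq\,\Tcal_{X/T}$ is preserved under pullback. Once this functoriality is in place, the cohomology-and-base-change step is automatic, precisely because $R^1$ is the top relative direct image for a relative curve.
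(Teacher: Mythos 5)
Your proposal is correct, and its overall skeleton (identify $\Bcal_{0,\mathfrak{X}_{G}/M^{rs}_{G}}^{-1}(\Escr_\phi)$ with $f^*\Bcal_{0,\mathfrak{X}_{\SL_r}/M^s_{\SL_r}}^{-1}(\Escr)$, then apply cohomology and base change to the Cartesian square) matches the paper's, but the key middle step is argued by a genuinely different mechanism. The paper never works on the relative square $X\times_T X$ at all: it pulls back the traceless Atiyah sequence of $\Escr$ along $f$, uses the functoriality of Atiyah algebras under pullback (as in Atiyah's original construction) to get $\operatorname{At}_{\mathfrak{X}_G/M^{rs}_G}(\Escr_\phi)\simeq f^*\operatorname{At}_{\mathfrak{X}_{\SL_r}/M^s_{\SL_r}}(\Escr)$, dualizes, and then invokes the duality $\Bcal_{0}^{-1}(\Ecal)\simeq \operatorname{At}(\Ecal)^\vee$ (via $\nu_{\slfrak(r)}$, from \cite[Thm.\ B.2.6]{BBMP20}, the maps $q_{\Escr}$, $q_{\Escr_\phi}$ in the paper's diagram) to transport the identification to the Bloch--Esnault extensions; this keeps every sheaf locally free on $\mathfrak{X}$ and makes exactness of $f^*$ automatic. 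You instead verify base-change compatibility directly at the level of the Beilinson--Schechtman kernel $(\Escr\boxtimes\Escr'(\Delta))/(\Escr\boxtimes\Escr'(-\Delta))$, which is more self-contained (no appeal to the BBMP duality theorem) but is exactly where you correctly anticipate the delicate point. One refinement there: your justification ``as all sheaves in sight are locally free, the pullback is exact'' is not quite the right reason, since the quotient sheaves are supported on $2\Delta$ and are not locally free on the ambient square; what saves exactness of $(f\times_\phi f)^*$ is that these quotients are flat over $M^s_{\SL_r}$ (locally they are pushforwards of line bundles from $2\Delta$, and $\Delta = M\times_S\Delta_{\Ccal/S}$ is flat over the moduli factor), so the relevant $\mathrm{Tor}_1$ against $\Ocal_{M^{rs}_G}$ vanishes. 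Your final step agrees with the paper's: both use that the fibers of $\pi_{n,\SL_r}$ are smooth projective curves, so $R^2\pi_{n,\SL_r*}$ vanishes on coherent sheaves flat over the base and the $R^1$ base-change map is an isomorphism (the paper additionally records local freeness of $R^1\pi_{n,\SL_r*}(\Bcal_0^{-1}(\Escr))$; your top-degree surjectivity argument is an acceptable, slightly cleaner variant).
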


\begin{proof}
	Consider the following digram 
	\begin{center}
		\begin{tikzcd}
			\Escr_\phi\simeq f^*(\Escr)
			\arrow[r] \arrow[d]& \Escr\arrow[d]\\ 
			M^{rs}_{G}\times_{S}C=:\mathfrak{X}_{G}
			\arrow[r, "f"]\arrow[d,"\pi_{n,G}"]&
			\mathfrak{X}_{\operatorname{\SL_r}}:=M^s_{\SL_r}\times_{S}C
			\arrow[d,"\pi_{n,\SL_r}"]	\\
			M^{rs}_{G} \arrow[r, "\phi"] & M^s_{\SL_r}.
		\end{tikzcd}
	\end{center}
	
	This induces a map  of the following exact sequences of Atiyah
	algebras as in \cite{Atiyahconn}:
	\begin{equation*}
	\begin{tikzcd}\label{eqn:res1}
	0 \arrow[r] & \End_0(\Escr_\phi) \arrow[r] \arrow[d,"\cong"] &
	\operatorname{At}_{\mathfrak{X}_{G}/M^{rs}_{G}}(\Escr_\phi)
	\arrow[r] \arrow[d, "\cong"] &
	\mathcal{T}_{\mathfrak{X}_{G}/M^{rs}_{G}}
	\arrow[r] \arrow[d, "\cong"]& 0\\
	0 \arrow[r] & f^* \End_0(\Escr)
	\arrow[r] & f^*\operatorname{At}_{\mathfrak{X}_{\SL_r}/M^s_{\SL_r}}(\Escr)
	\arrow[r] &f^*\mathcal{T}_{\mathfrak{X}_{\SL_r}/M^s_{\SL_r}} \arrow[r] &0.
	\end{tikzcd}
	\end{equation*}
	
	Dualizing, we obtain	
	\begin{equation*}
	\begin{tikzcd}
	0 \arrow[r] & f^* \Omega_{\mathfrak{X}_{\SL_r}/M^s_{\SL_r}} \arrow[r]&
	f^*(\operatorname{At}_{\mathfrak{X}_{\SL_r}/M^s_{\SL_r}}(\Escr)^{\vee}) \arrow[r] &
	f^*((\End_0(\Escr))^{\vee})\arrow[r] &0\\
	0 \arrow[r] &  \Omega_{\mathfrak{X}_{G}/M^{rs}_{G}} \arrow[r]\arrow[from=u] \arrow[d, equal] &\operatorname{At}_{\mathfrak{X}_{G}/M^{rs}_{G}}
	(\Escr_\phi)^{\vee} \arrow[r]\arrow[from=u, "\cong"]
	\arrow[d, "q_{\Escr_\phi}^{-1}"',"\cong"]&
	(\End_0(\Escr_\phi))^{\vee}\arrow[r]\arrow[from=u] \arrow[d, "\cong","-\nu_{\mathfrak{sl}(r)}^{-1}"'] &0\\
	0 \arrow[r]
	&{\Omega}_{\mathfrak{X}_{G}/M^{rs}_{G}}
	\arrow[r] & \Bcal_{0,{\mathfrak{X}_{G}/M^{rs}_{G}}}^{-1}(\Escr_\phi) \arrow[r] & \End_0(\Escr_\phi))
	\arrow[r]& 0\\
	0 \arrow[r] & f^*{\Omega}_{\mathfrak{X}_{\SL_r}/M^s_{\SL_r}} \arrow[r]
	\arrow[uuu, bend right=65, crossing over]&
	f^*(\Bcal_{0,{\mathfrak{X}_{\SL_r}/M^s_{\SL_r}}}^{-1}(\Escr))\arrow[r] \arrow[uuu,
	"q_{\Escr}" description, bend right=65,, crossing
	over]& f^* \End_0(\Escr)\arrow[r]\arrow[uuu, bend right=65, crossing over,"-\nu_{\mathfrak{sl}(r)}" description] &0.
	\end{tikzcd}
	\end{equation*}
	Hence,  by composing we get the following diagram
	\begin{equation*}
	\begin{tikzcd}\label{eqn:res2}
	0 \arrow[r]
	&{\Omega}_{\mathfrak{X}_{G}/M^{rs}_{G}}
	\arrow[r] \arrow[from=d,"\cong"] &
	\Bcal_{0,\mathfrak{X}_{G}/M^{rs}_{G}}^{-1}(\Escr_\phi) \arrow[r] \arrow[from=d,
	"\cong"] & \End_0(\Escr_\phi) \arrow[r] \arrow[from=d, "\cong"]& 0\\
	0 \arrow[r] & f^*{\Omega}_{\mathfrak{X}_{\SL_r}/M^s_{\SL_r}} \arrow[r] &
	f^*(\Bcal_{0,{\mathfrak{X}_{\SL_r}/M^s_{\SL_r}}}^{-1}(\Escr))\arrow[r] & f^* \End_0(\Escr)\arrow[r] &0.
	\end{tikzcd}
	\end{equation*}
	
Applying $R^1\pi_{n,G\, \ast}$, we conclude that the following  extensions are isomorphic:
	\begin{equation*}
	\begin{tikzcd}
	0 \arrow[r] & R^1\pi_{n,G\, \ast}
	\Omega_{\mathfrak{X}_{G}/M^{rs}_{G}}
	\arrow[r] &
	R^1\pi_{n,G\, \ast}(\Bcal_{0,\mathfrak{X}_{G}/M^{rs}_{G}}^{-1}(\Escr_\phi))
	\arrow[r] & R^1\pi_{n,G\, \ast}
	(\End_0(\Escr_\phi)) \arrow[r]  &0\\
	0 \arrow[r]& R^1\pi_{n,G\, \ast} f^*
	\Omega_{\mathfrak{X}_{\SL_r}/M^s_{\SL_r}}
	\arrow[u,"\cong"]\arrow[r] &
	R^1\pi_{n,G\, \ast}(f^*(\Bcal_{0,\mathfrak{X}_{\SL_r}/M^s_{\SL_r}}^{-1}(\Escr)))
	\arrow[r] \arrow[u]& R^1\pi_{n,G\, \ast}f^*(\End_0(\Escr))
	\arrow[r]  \arrow[u,"\cong"]& 0.
	\end{tikzcd}
	\end{equation*}

Thus to finish the proof of the proposition we need to show that 
\begin{equation*}
R^1\pi_{n, {G}*} (f^*(\Bcal_{0,\mathfrak{X}_{\SL_r}/M^s_{\SL_r}}^{-1}(\Escr)))
\,\cong\, \phi^*(R^1\pi_{n, \SL_r*}(\Bcal_{0,\mathfrak{X}_{G}/M^{rs}_{G}}^{-1}(\Escr))).
	\end{equation*}
	But since the fibers of the morphism $\pi_{n, \SL_r}$ are smooth
	projective  curves, we conclude
	that $R^2\pi_{n, \SL_r*}\mathcal{F}$ is zero for any coherent
	sheaf $\mathcal{F}$ which is flat over $M_{\SL_r}$. Moreover,
	$R^1\pi_{n, \SL_r*}(\Bcal_0^{-1}(\Escr))$
	is locally free. Hence by base change of
cohomologies for flat morphisms, we get the required isomorphism. This completes the proof. 
\end{proof}

Consider the following diagram where $m_{\phi}$ be the Dynkin index of the
embedding $\mathfrak{g}\,\longrightarrow\, \mathfrak{sl}(V)$:
\begin{equation*}
\begin{tikzcd}[row sep=2em, column sep=1em]
0 \arrow[r] & \mathcal{O}_{M_G} \arrow[d,equal] \arrow[r] & 
\phi^*\operatorname{At}_{M_{\SL_r}^{s}/S}(\mathcal{L}^{-1}) \arrow[r]\arrow[d,
"\cong"']& \phi^*\mathcal{T}_{M^s_{\SL_r}/S} \arrow[r]\arrow[d, "KS_{M_{\SL_r}^{s}/S}"] &0\\
0\arrow[r]&\phi^*\left(R^1\pi_{n,\SL_r\, \ast}
\Omega_{\mathfrak{X}_{\SL_r}/M^s_{\SL_r}}\right)
\arrow[r]& \phi^{*}\left(R^1\pi_{n,\SL_r*}(\Bcal_{0,\mathfrak{X}_{\SL_r}/M^s_{\SL_r}}^{-1}(\Escr)\right)
\arrow[r]& \phi^*\left(R^1\pi_{n,\SL_r*}(\End_0(\Escr)\right)\arrow[r]\arrow[d, equal] &0 \\
0\arrow[r]&R^1\pi_{n,G\,\ast}
\Omega_{\mathfrak{X}_{G}/M^{rs}_{G}}
\arrow[r]\arrow[u,equal]&
R^1\pi_{n,G\, \ast}(\Bcal_{0,{\mathfrak{X}_{G}/M^{rs}_{G}}}^{-1}(\Escr_\phi))
\arrow[r]\arrow[u, "\cong"]& R^1\pi_{n,G\, \ast}(\End_0(\Escr_\phi))
\arrow[r] &0\\
0 \arrow[r]& R^1\pi_{n,G\, \ast}
\Omega_{\mathfrak{X}_{G}/M^{rs}_{G}}
\arrow[u, "m_{\phi}" description] \arrow[r] & 
R^1\pi_{n,G\, \ast}( \Sfrak^{-1}_{\mathfrak{X}_{G}/M^{rs}_{G}}(\Pscr)) 
\arrow[r]\arrow[u]&
\mathcal{T}_{M^{rs}_{G}/S}\arrow[r] \arrow[u, "-\phi_*" description]&0.
\end{tikzcd}
\end{equation*}
In this diagram, by Proposition \ref{conj:firstone} 
we get the isomorphism of the first two rows.
Finally, the map between the second and the third row follows from Proposition
\ref{prop:veryimportant} and Proposition \ref{eqn:veryimp}. 
Thus we get that $R^1\pi_{n,{G}*} (\mathfrak{S}^{-1}_{\mathfrak{X}_{G}/M^{rs}_{G}}(\Pscr))$ is isomorphic to the Atiyah algebra 
$\frac{1}{m_{\phi}}\phi^{\bullet}\operatorname{At}_{M^{s}_{\SL_r}/S}(\mathcal{L})$, where $\phi^{\bullet}$ denote 
the pull-back in the category of Atiyah algebras.

\section{Parabolic analog of Beilinson-Schechtman construction}\label{sec:par-atiyah}

We now extend the previous considerations to the case of parabolic bundles.
In order to analyze parabolic Atiyah algebras for families  of parabolic bundles 
on a curve $C$, we adopt notion of $\Gamma$-linearized
bundles on a Galois cover $\widehat{C}\,\longrightarrow\, C$ with Galois group $\Gamma$. 
See \cite[\S~6]{BMW1} for more details.

\subsection{Parabolic vector bundles} \label{sec:par-vector}

Let $\widetilde{\mathcal{E}}$ be a vector bundle of rank $r$ on a family of ramifed $\Gamma$-cover of curves 
$\widetilde{\pi}\,:\,\widetilde{X}\,\longrightarrow\,{T}$ ramified along $\widehat{D}$. In other words,
there is a natural projection 
$p\,:\, \widetilde{X}\,\longrightarrow\, X$ which is a
ramifed $\Gamma$-covering such that $\widetilde{\pi}\,=\,\pi\circ p$. Let
$D\,:=\,p(\widehat{D})\,\subset\, X$ be the divisor or marked points. Let $\widetilde{\mathcal{E}}$ be a
family of vector bundles on $\widetilde{X}$ which is $\Gamma$-linearized. Let $\mathcal{E}$ 
be the vector bundle on $X$ defined by the invariant pushforward of
the $\Gamma$-bundle $\widetilde{{\mathcal{E}}}$. By the discussion in
\cite[\S~7]{BMW1}, the vector bundle $\mathcal{E}$ comes equipped with a parabolic structure supported
on $D$.
Recall the trace-zero relative Atiyah sequence 
\begin{equation}\label{eqn:equiatiyah}
0 \lra\End_0(\widetilde{\mathcal{E}})
\lra\operatorname{At}_{\widetilde{X}/T}(\widetilde{\mathcal{E}})
\lra\mathcal{T}_{\widetilde{X}/T}\lra 0.
\end{equation} 
By Seshadri \cite{SeshadriI}, we can identify the sheaf of parabolic endomorphism $\Par(\widetilde{\Ecal})$ with $p_{*}^{\Gamma}\End_{0}(\widetilde{\Ecal)}$. With this set-up we can consider the  {\em parabolic Atiyah algebra} with the following fundamental exact  sequence
$$0 \lra {\Par_0(\widetilde{\Ecal})\atop \cong p_*^{\Gamma}(\End_0(\widetilde{\Ecal})}\lra {{}^{par}\At_{X/T}(\Ecal)\atop:= p_*^{\Gamma}\At_{\widetilde{X}/T}(\widetilde{\Ecal})}\lra \mathcal{T}_{X/T}(-D)\lra 0.$$
As before, $\widehat{D}$ is the relative ramification divisor in
$\widetilde{X}$; consider the log-relative traceless Atiyah sequence obtained from \eqref{eqn:equiatiyah}
\begin{equation*}
0 \lra\End_0(\widetilde{\mathcal{E}})(-\widehat{D})
\lra(\operatorname{At}_{\widetilde{X}/T}(\widetilde{\mathcal{E}}))(-\widehat{D})
\lra\mathcal{T}_{\widetilde{X}/T}(-\widehat{D})\lra 0.
\end{equation*}
Since all the objects are naturally $\Gamma$-linearized, we can apply the
invariant push-forward 
functor $p_*^{\Gamma}$ to  get the \emph{strongly parabolic Atiyah algebra} ${}^{spar}\At_{X/T}(\Ecal)$ with the fundamental exact sequence

\begin{equation*}
\label{eqn:sparabolicatiyahseq}
0\lra {\SParEnd_0(\mathcal{E})\atop \cong p_*^{\Gamma}\left(\End_0(\widetilde{\Ecal})(-\widehat{D})\right)} \lra p_*^{\Gamma}\left(
\operatorname{At}_{\widetilde{X}/T}(\widetilde{\mathcal{E}})(-\widehat{D})\right)
\lra \mathcal{T}_{X/T}(-D)\lra 0
\end{equation*}
Tensoring with $\mathcal{O}_X(D)$ we get, 
\begin{equation*}
\label{eqn:sparabolicatiyahseqnew}
0\lra \SParEnd_0(\mathcal{E})(D) \lra p_*^{\Gamma}\left(
\operatorname{At}_{\widetilde{X}/T}(\widetilde{\mathcal{E}})(-\widehat{D})\right)(D)
\lra \mathcal{T}_{X/T}\lra 0.
\end{equation*}
As in Section \ref{sec:ginzburg}, consider the dual exact sequence:
\begin{equation}\label{eqn:sparpush}
0 \lra \Omega_{{X}/T} \lra \left(  p_*^{\Gamma}\left(
\operatorname{At}_{\widetilde{X}/T}(\widetilde{\mathcal{E}})(-\widehat{D})\right)(D)\right)^{\vee}
\lra \left(\SParEnd_0(\mathcal{E})(D)\right)^{\vee} \lra 0.
\end{equation}
The trace pairing $\kappa_{\mathfrak{sl}(r)}:
\End_0(\widetilde{\mathcal{E}}) \otimes  \End_0(\widetilde{\mathcal{E}}) \rightarrow \mathcal{O}_{\widetilde{X}}$
gives an $\mathcal{O}_{\widetilde{X}}(-\widehat{D})$ valued pairing 
\begin{equation*}
\kappa_{\mathfrak{sl}(r)}:  \End_0(\widetilde{\mathcal{E}})\otimes
\End_0(\widetilde{\mathcal{E}})(-\widehat{D}) \lra \mathcal{O}_{\widetilde{X}}(-\widehat{D}).
\end{equation*}
Taking $\Gamma$-invariant push forward $p_*^{\Gamma}$ of the above exact sequence we get a map 
$$\kappa_{\mathfrak{sl}(r)}:  \ParEnd_0(\mathcal{E})\otimes \SParEnd_0(\mathcal{E})
\lra\mathcal{O}_{{X}}(-D).$$ Now by multiplying by $\mathcal{O}_{X}(D)$ on both sides we get the following:
\begin{proposition}
	\label{prop:parabolictrace}
	The trace induces a nondegenerate pairing
	$$\kappa_{\mathfrak{sl}(r)}: \ParEnd_0({\mathcal{E}})\otimes
	\SParEnd_0({\mathcal{E}})(D)\ \lra \mathcal{O}_X$$ which
	identifies $(\SParEnd_0({\mathcal{E}})(D))^\vee\cong \ParEnd_0({\mathcal{E}})$. 
\end{proposition}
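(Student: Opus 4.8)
The plan is to show that the $\mathcal O_X$-bilinear map $\kappa_{\mathfrak{sl}(r)}$ induces an isomorphism $\Theta\colon \ParEnd_0(\mathcal E)\to (\SParEnd_0(\mathcal E)(D))^\vee$; nondegeneracy and the asserted identification are exactly this. Both sheaves are locally free: $\ParEnd_0(\mathcal E)=p_*^{\Gamma}\End_0(\widetilde{\mathcal E})$ and $\SParEnd_0(\mathcal E)=p_*^{\Gamma}(\End_0(\widetilde{\mathcal E})(-\widehat D))$ are each the $\Gamma$-invariant part of the pushforward along the finite flat map $p$ of a locally free sheaf, hence a direct summand (we are in characteristic zero) of a locally free sheaf, and the dual of a locally free sheaf is locally free. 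Over $X\setminus D$ they restrict to $\End_0(\mathcal E)$ and its dual, so both have rank $r^2-1$ and $\Theta$ is generically an isomorphism. Since $p_*^{\Gamma}=(p_*(\,\cdot\,))^{\Gamma}$ is exact and $\mathcal O_X=p_*^{\Gamma}\mathcal O_{\widetilde X}$, the pushed-forward pairing is genuinely $\mathcal O_X$-bilinear, and over the étale locus $X\setminus D$ the twist $\mathcal O_X(\pm D)$ is trivial and $\Theta$ is the invariant pushforward of the perfect trace form on $\End_0(\widetilde{\mathcal E})$, hence an isomorphism there. It therefore remains only to check $\Theta$ on the stalks over the marked divisor $D$.

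For those stalks I would use the local $\Gamma$-model of the cover. Near a point of $\widehat D$ with cyclic stabilizer $\mu_e$, choose coordinates $w$ on $\widetilde X$ and $z=w^e$ on $X$, with $\mu_e$ acting by $w\mapsto \zeta w$, and write $\widetilde{\mathcal E}\cong\bigoplus_j \mathcal O_{\widetilde X}\,s_j$ with $s_j$ of weight $a_j\in\{0,\dots,e-1\}$ (these are the parabolic weights). Then $\End(\widetilde{\mathcal E})$ has the standard generators $E_{jk}$ of weight $a_j-a_k$, and taking $\mu_e$-invariants shows that $\ParEnd_0(\mathcal E)$ is freely generated over $\mathcal O_X$ by the $w^{\langle a_k-a_j\rangle}E_{jk}$ (intersected with the traceless part), while $\SParEnd_0(\mathcal E)$ is freely generated by the $w^{s_{kj}}E_{kj}$, where $\langle m\rangle\in\{0,\dots,e-1\}$ denotes the residue of $m$ modulo $e$ and $s_{kj}=\langle a_j-a_k\rangle$ when $a_j\neq a_k$ while $s_{kj}=e$ when $a_j=a_k$. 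The crux is the exponent identity $\langle a_k-a_j\rangle+s_{kj}=e$, valid in every case.

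With these frames, and recalling that $\SParEnd_0(\mathcal E)(D)$ is generated locally by the $z^{-1}w^{s_{kj}}E_{kj}$ since $\mathcal O_X(D)=z^{-1}\mathcal O_X$ locally, the trace pairs $w^{\langle a_k-a_j\rangle}E_{jk}$ with $z^{-1}w^{s_{lm}}E_{lm}$ to $z^{-1}w^{\langle a_k-a_j\rangle+s_{lm}}\tr(E_{jk}E_{lm})=z^{-1}w^{\langle a_k-a_j\rangle+s_{lm}}\delta_{kl}\delta_{jm}$; the only nonzero contribution comes from $(l,m)=(k,j)$, where the exponent is $\langle a_k-a_j\rangle+s_{kj}=e$ and the common factor is the unit $z^{-1}w^{e}=1$. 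Hence in these frames the Gram matrix of $\kappa_{\mathfrak{sl}(r)}$ is, up to this single unit, the constant Gram matrix of the trace form on $\mathfrak{sl}_r$ (block-diagonal by weight, each block being either an off-diagonal pairing $E_{jk}\leftrightarrow E_{kj}$ or the nondegenerate Killing block on the weight-zero Cartan part), which is invertible over $\mathcal O_X$. Thus $\Theta$ is an isomorphism on the stalks over $D$, completing the proof. I expect the bookkeeping at $D$ to be the only real obstacle: one must verify that the twist by $\mathcal O_X(D)$ exactly compensates the shift between parabolic and strongly parabolic sections, which is the content of the identity $\langle a_k-a_j\rangle+s_{kj}=e$. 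Conceptually this is $\Gamma$-equivariant Grothendieck duality for the finite flat cover $p$: the self-duality $\End_0(\widetilde{\mathcal E})^\vee\cong\End_0(\widetilde{\mathcal E})$ furnished by the trace, the tame different $\omega_{\widetilde X/X}\cong\mathcal O_{\widetilde X}((e-1)\widehat D)$, and the relation $p^*\mathcal O_X(D)\cong\mathcal O_{\widetilde X}(e\widehat D)$ combine to give $(\SParEnd_0(\mathcal E)(D))^\vee\cong p_*^{\Gamma}\End_0(\widetilde{\mathcal E})=\ParEnd_0(\mathcal E)$, and the local computation above is precisely the check that this abstract isomorphism is the one induced by $\kappa_{\mathfrak{sl}(r)}$.
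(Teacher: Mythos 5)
Your proof is correct, and in fact it supplies more than the paper does. The paper's entire argument for this proposition is the construction displayed immediately before it: twist the trace pairing on the cover to $\kappa_{\mathfrak{sl}(r)}\colon \End_0(\widetilde{\Ecal})\otimes\End_0(\widetilde{\Ecal})(-\widehat D)\to\Ocal_{\widetilde X}(-\widehat D)$, apply the invariant pushforward $p_*^{\Gamma}$ (using Seshadri's identifications $\ParEnd_0(\Ecal)\cong p_*^{\Gamma}\End_0(\widetilde{\Ecal})$, $\SParEnd_0(\Ecal)\cong p_*^{\Gamma}\bigl(\End_0(\widetilde{\Ecal})(-\widehat D)\bigr)$ and $p_*^{\Gamma}\Ocal_{\widetilde X}(-\widehat D)\cong\Ocal_X(-D)$), then tensor with $\Ocal_X(D)$; nondegeneracy is simply asserted, as part of the standard parabolic/strongly parabolic duality going back to Seshadri. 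You build the pairing in exactly the same way, but then actually verify perfectness: generically it is the invariant pushforward of the perfect trace form, and at the marked points your eigenbasis computation, hinging on the exponent identity $\langle a_k-a_j\rangle+s_{kj}=e$ (correctly including the case $a_j=a_k$, where $s_{kj}=e$), shows the twist by $D$ exactly cancels the shift between the parabolic and strongly parabolic invariant frames, so the Gram matrix is a unit times the constant trace-form Gram matrix of $\slfrak_r$, which is invertible. Your closing observation that this is $\Gamma$-equivariant duality for the finite flat cover $p$, via $\omega_{\widetilde X/X}\cong\Ocal_{\widetilde X}((e-1)\widehat D)$ and $p^*\Ocal_X(D)\cong\Ocal_{\widetilde X}(e\widehat D)$, is the right conceptual gloss and matches the paper's identification $p_*^{\Gamma}\Omega_{\widetilde X/T}\cong\Omega_{X/T}$. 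Two cosmetic points: since the paper works with a family $X\to T$, the local model should be taken fiberwise with the base parameters as spectators (the isotropy weights are locally constant, so nothing changes); and the weight-zero block contains not only the traceless diagonal part but also the off-diagonal $E_{jk}$ with $a_j=a_k$ --- both of which your frames already handle, since there $E_{jk}$ pairs against $z^{-1}w^{e}E_{kj}=E_{kj}$.
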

We pull back the sequence in \eqref{eqn:sparpush} via the map $\nu_{\mathfrak{sl}(r)}$ to get the following quasi-Lie algebra which we denote by 
${}^{spar}\widetilde{\At}_{X/T}(\Ecal) $
\begin{equation}\label{eqn:spardual}
\begin{tikzcd}[column sep=small]
0 \arrow[r] & \Omega_{X/T} \arrow[r]& \left(  p_*^{\Gamma}\left(
\operatorname{At}_{\widetilde{X}/T}(\widetilde{\mathcal{E}})(-\widehat{D})\right)(D)\right)^{\vee}
\arrow[r] & (\SParEnd_0(\mathcal{E})(D))^{\vee}  \arrow[r]& 0\\
0 \arrow[r] & \Omega_{X/T} \arrow[r]\arrow[u, equal] &
{}^{spar}\widetilde{\At}_{X/T}(\mathcal{E})
\arrow[r]\arrow[u,"\cong"]& \ParEnd_0({\mathcal{E}}) \arrow[r]\arrow[u,"\nu_{\mathfrak{sl}(r)}"] &0.
\end{tikzcd}
\end{equation}
\subsubsection{Parabolic Bloch-Esnault complex} \label{sec:par-be}

By the construction of Beilinson-Schechtman \cite{BS88}, we get an exact sequence of sheaves: 
\begin{equation*}
0 \lra \End(\widetilde{\mathcal{E}})\otimes \Omega_{\widetilde{X}/T}\lra
{}^{tr}\widetilde{\mathcal{A}}_{\widetilde{X}/T}(\widetilde{\mathcal{E}})^{-1}\lra
\operatorname{At}_{\widetilde{X}/T}(\widetilde{\mathcal{E}})\lra 0.
\end{equation*}
Recall that since $\widetilde{X} \rightarrow T$ is a $\Gamma$-cover of
curves, and now assume that $\widetilde{\mathcal{E}}$ is $\Gamma$-equivariant. 
This implies that all terms in the above exact sequence have
a $\Gamma$-action. Taking $\Gamma$-invariant pushforward we get the following:
\begin{equation}\label{eqn:pushforwardoftrace}
0 \lra p_*^{\Gamma}(\End(\widetilde{\mathcal{E}})\otimes
\Omega_{\widetilde{X}/T})\lra
p_*^{\Gamma}({}^{tr}\widetilde{\mathcal{A}}_{\widetilde{X}/T}(\widetilde{\mathcal{E}})^{-1})\lra
p_*^{\Gamma}(\operatorname{At}_{\widetilde{X}/T}(\widetilde{\mathcal{E}}))\lra 0.
\end{equation}
Here $\At_{\widetilde{X}/T}(\widetilde{\Ecal})$ is the relative Atiyah algebra of the bundle of the vector bundle $\widetilde{\Ecal}$. 
The last term of the above is the \emph{parabolic Atiyah algebra}
$\ParAt_{X/T}({\mathcal{E}})$.
Pulling back the exact sequence in \eqref{eqn:pushforwardoftrace} via the natural inclusion
$\ParEnd_0({\mathcal{E}}) \hookrightarrow  \ParAt({\mathcal{E}})$,
we get an exact sequence: 
\begin{equation}
\label{eqn:pushbackoftrace}
0 \lra p_*^{\Gamma}(\End(\widetilde{\mathcal{E}})\otimes
\Omega_{\widetilde{X}/T})\lra 
{}^{par}\widetilde{\mathcal{B}}_{0,{X}/T}^{-1}({\mathcal{E}})\lra
\ParEnd_0({\mathcal{E}})\lra 0.
\end{equation}

Taking invariant pushforward with respect to the trace of an endomorphism  $\operatorname{tr}: \mathcal{E}nd(\widetilde{\mathcal{E}}) \rightarrow
\mathcal{O}_{\widetilde{X}}$,
we get another map $\operatorname{tr}:p_*^{\Gamma}\End (\widetilde{\mathcal{E}}\otimes
\Omega_{\widetilde{X}/T}) \lra p_*^{\Gamma}\Omega_{\widetilde{X}/T}.$

Since $\Omega_{\widetilde{X}/T}\cong p^*\Omega_{{X}/T}\otimes \mathcal{O}(\widehat{D})$, 
where $\widehat{D}$ is the relative ramification $p: \widetilde{X} \rightarrow X$. We can identify $p_*^{\Gamma}(\Omega_{\widetilde{X}/T})\cong \Omega_{X/T}$.  This in turn gives a map 
\begin{equation}\label{eqn:trace}
\operatorname{tr}: p_*^{\Gamma}(\End (\widetilde{\mathcal{E}})\otimes
\Omega_{\widetilde{X}/T}) \lra {\Omega}_{X/T}.
\end{equation}
Taking pushforward of the exact sequence in \eqref{eqn:pushforwardoftrace} via the parabolic trace in \eqref{eqn:trace}, we get the following exact sequence: 
\begin{equation}\label{eqn:parabolicdgla}
0 \lra \Omega_{{X}/T} \lra
{}^{par}\mathcal{B}_{0,X/T}^{-1}({\mathcal{E}})\lra \ParEnd_0({\mathcal{E}})
\lra 0.
\end{equation}
We can summarize the above discussion in the following commutative diagram:
\begin{equation*}
\label{eqn:commutativediagram}
\begin{tikzcd}[column sep=small]
0\arrow[r] & p_*\left(\End(\widetilde{{\mathcal{E}}})\otimes \Omega_{\widetilde{X}/T}\right) \arrow[r] 	&p_*({}^{tr}\widetilde{\mathcal{A}}_{\widetilde{X}/T}(\widetilde{{\mathcal{E}}})^{-1}) \arrow[r] 
& p_*\left(\operatorname{At}_{\widetilde{X}/T}(\widetilde{{\mathcal{E}}})\right)  \arrow[r] &0\\
0\arrow[r] & p_*^{\Gamma}(\End(\widetilde{{\mathcal{E}}})\otimes \Omega_{\widetilde{X}/T}) \arrow[r] \arrow[u, hook] 	&p_*^{\Gamma}({}^{tr}\widetilde{\mathcal{A}}_{\widetilde{X}/T}(\widetilde{{\mathcal{E}}})^{-1}) \arrow[r] \arrow[u, hook] 
& \ParAt_{\widetilde{X}/T}({{\mathcal{E}}})  \arrow[u, hook] \arrow[r] &0\\
0\arrow[r] & p_*^{\Gamma}(\End(\widetilde{{\mathcal{E}}})\otimes \Omega_{\widetilde{X}/T}) \arrow[u,equal]\arrow[r]\arrow[d, "\operatorname{tr}"]	&{}^{par}\widetilde{\mathcal{B}}_{\widetilde{X}/T}^{-1}({\mathcal{E}}) \arrow[r] \arrow[d, "\operatorname{tr}_*"]\arrow[u,hook]
& \ParEnd_0({\mathcal{E}})  \arrow[r] \arrow[d,equal] \arrow[u,hook]&0\\
0\arrow[r] & \Omega_{{X}/T} \arrow[r]	&{}^{par}\Bcal_{0,_{{X}/T}}^{-1}({\mathcal{E}}) \arrow[r]
& \ParEnd_0({\mathcal{E}})  \arrow[r] &0.
\end{tikzcd}
\end{equation*}
Recall from Section 2.1.1.1 in Beilinson-Schechtman \cite[Lemma (a)]{BS88}, that there is a residue pairing
$$\widetilde{\operatorname{Res}}: \Omega_{\widetilde{X}/T}\boxtimes
\Omega_{\widetilde{X}/T}(3\D)\lra \mathcal{O}_{\widetilde{X}}.$$
The
following theorem connects the parabolic Ginzburg dgla defined above to the
quasi Lie algebra ${}^{spar}\widetilde{\At}_{X/T}(\mathcal{E})$ defined by \eqref{eqn:spardual}.
\begin{theorem}
	\label{thm:maindgla}
	There is an isomorphism induced by invariant push-forward of the
	residue pairing between the quasi-Lie algebras
	${}^{spar}\widetilde{\At}_{X/T}(\mathcal{E})$  and ${}^{par}
	\mathcal{B}_{0,X/T}^{-1}(\mathcal{E})$ which induces an isomorphism of exact sequences: 
	\begin{equation*}
	\begin{tikzcd}
	0\arrow[r] & \Omega_{{X}/T} \arrow[r]\arrow[d,equal]
	&{}^{spar}\widetilde{\At}_{X/T}(\mathcal{E}) \arrow[r]\arrow[d,"\cong"]
	& \ParEnd_{0}({\mathcal{E}})  \arrow[r]\arrow[d,"-\id","\cong"'] &0 \\
	0\arrow[r] & \Omega_{{X}/T} \arrow[r]	&{}^{par}\Bcal_{0,X/T}^{-1}({\mathcal{E}}) \arrow[r]
	& \ParEnd_0({\mathcal{E}})  \arrow[r] &0.
	\end{tikzcd}
	\end{equation*}
\end{theorem}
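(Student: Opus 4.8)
The plan is to reduce the statement to the ordinary (non-parabolic) Beilinson--Schechtman duality on the $\Gamma$-cover $\widetilde{X}$ and then to descend by the invariant push-forward functor $p_*^{\Gamma}$. On $\widetilde{X}$ the bundle $\widetilde{\mathcal{E}}$ is an honest vector bundle, so the comparison of Section~\ref{sec:comparison} together with \cite[Thm.~B.2.6]{BBMP20} applies: the degree $-1$ part of the traceless Bloch--Esnault complex is identified, via the trace map, with the dual of the traceless relative Atiyah sequence \eqref{eqn:equiatiyah}. The mechanism behind this identification is precisely the Beilinson--Schechtman residue pairing $\widetilde{\operatorname{Res}}\colon \Omega_{\widetilde{X}/T}\boxtimes \Omega_{\widetilde{X}/T}(3\D)\to \mathcal{O}_{\widetilde{X}}$, which pairs the trace complex ${}^{tr}\widetilde{\mathcal{A}}_{\widetilde{X}/T}(\widetilde{\mathcal{E}})^{-1}$ against $\At_{\widetilde{X}/T}(\widetilde{\mathcal{E}})$. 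First I would upgrade this to the log-twisted version on the cover, recording how $\widetilde{\operatorname{Res}}$ behaves under the twist by $\mathcal{O}(-\widehat{D})$, so that it pairs the log-Atiyah sequence built from $\At_{\widetilde{X}/T}(\widetilde{\mathcal{E}})(-\widehat{D})$ against the trace complex; at the level of the endomorphism subsheaves this is exactly the $\mathcal{O}_{\widetilde{X}}(-\widehat{D})$-valued pairing $\kappa_{\mathfrak{sl}(r)}$ recalled just before Proposition~\ref{prop:parabolictrace}.

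Next I would apply $p_*^{\Gamma}$ to this cover-level duality. Using Seshadri's identification $\Par(\widetilde{\mathcal{E}})\cong p_*^{\Gamma}\End_0(\widetilde{\mathcal{E}})$, the isomorphism $p_*^{\Gamma}\Omega_{\widetilde{X}/T}\cong \Omega_{X/T}$ coming from $\Omega_{\widetilde{X}/T}\cong p^*\Omega_{X/T}\otimes \mathcal{O}(\widehat{D})$, and the nondegenerate parabolic trace pairing of Proposition~\ref{prop:parabolictrace} identifying $(\SParEnd_0(\mathcal{E})(D))^{\vee}\cong \ParEnd_0(\mathcal{E})$, the push-forward of the residue duality becomes exactly the comparison between the pulled-back dual sequence \eqref{eqn:spardual} defining ${}^{spar}\widetilde{\At}_{X/T}(\mathcal{E})$ and the push-forward trace sequence \eqref{eqn:parabolicdgla} defining ${}^{par}\Bcal_{0,X/T}^{-1}(\mathcal{E})$. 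Since both middle terms are obtained from the same $p_*^{\Gamma}$-image, one along the isomorphism $\nu_{\mathfrak{sl}(r)}$ and the other by pushing out the trace, the five lemma yields the isomorphism of the two exact sequences, equal to the identity on $\Omega_{X/T}$ and, after the trace identification, equal to $-\id$ on $\ParEnd_0(\mathcal{E})$, the sign being inherited from the duality convention already present in \eqref{eqn:impdia}.

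The hard part will be the local verification, near the ramification divisor $\widehat{D}$, that the invariant push-forward of the order-three-pole pairing $\widetilde{\operatorname{Res}}$ coincides on the nose with the parabolic trace pairing of Proposition~\ref{prop:parabolictrace}, including the bookkeeping of the twist by $(-\widehat{D})$ upstairs against $(D)$ downstairs and the resulting sign $-\id$ on $\ParEnd_0(\mathcal{E})$. This is where the $\Gamma$-equivariance of $\widetilde{\mathcal{E}}$ and the precise behavior of the residue data at the marked points must be tracked in local coordinates adapted to the $\Gamma$-action; once this matching is confirmed, the global statement follows formally from functoriality of $p_*^{\Gamma}$ applied to the already-established non-parabolic residue duality.
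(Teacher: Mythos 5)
Your proposal follows essentially the same route as the paper's proof: the Beilinson--Schechtman residue pairing on the $\Gamma$-cover (identified with $\mp\kappa_{\mathfrak{gl}(r)}$ on the relevant quotients of $\BOX$ via \cite[Lemma B.2.8]{BBMP20}), twisted by $\mathcal{O}_{\widetilde{X}}(-\widehat{D})$, pushed forward by $p_*^{\Gamma}$ to an $\mathcal{O}_X(-D)$-valued form, and then matched after twisting by $\mathcal{O}_X(D)$ with the parabolic trace pairing of Proposition \ref{prop:parabolictrace}, yielding the isomorphism with $\id$ on $\Omega_{X/T}$ and $-\id$ on $\ParEnd_0(\mathcal{E})$. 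The local verification near $\widehat{D}$ that you flag as the hard part is exactly what the paper dispatches without new coordinate computation, by quoting the cover-level identification from \cite{BBMP20} and descending formally through the $\Gamma$-linearizations and the twist bookkeeping.
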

\begin{proof}Let $\widetilde{\mathcal{E}}':=\widetilde{\mathcal{E}}\otimes \Omega_{\widetilde{X}/T}$, as in Beilinson-Schechtman, the sheaf of first order differential operators $\mathcal{D}^{\leq 1}( \WE)$ can be identified with $\frac{\BOX(2\D)}{\BOX}$. There is a natural exact sequence of sheaves 
	\begin{equation*}
	0 \lra \frac{\BOX(\D)}{\BOX} \lra\frac{\BOX(2\D)}{\BOX}
	\lra\frac{\BOX(2\D)}{\BOX(\Delta)} \lra 0. 
	\end{equation*}
	Moreover, all the objects of the above exact sequence are $\Gamma$-linearized. In particular apply the invariant pushforward functor we get the following commutative diagram: 
	
	\begin{equation*}
	0 \lra p_*^{\Gamma}\frac{\BOX(\D)}{\BOX}  \lra p_*^{\Gamma}
	\frac{\BOX(2\D)}{\BOX} \lra
	p_*^{\Gamma}\frac{\BOX(2\D)}{\BOX(\Delta)} \lra 0.	
	\end{equation*} 
	Similarly we have another short exact sequence  from \cite{BS88} that is used in constructing the trace complex. 
	\begin{equation*}
	0 \lra \frac{\BOX}{\BOX(-\D)} \lra \frac{\BOX(2\D)}{\BOX(-\D)} \lra
	\frac{\BOX(2\D)}{\BOX} \lra 0.
	\end{equation*}
	Pulling  back the  exact sequence above by the inclusion
	$\End(\WE)\cong \frac{\BOX(\Delta)}{\BOX} \hookrightarrow \frac{\BOX(2\D)}{\BOX}.$
	we get a short exact sequence 
	\begin{equation*}
	0 \lra \frac{\BOX}{\BOX(-\D)}\lra \frac{\BOX(\D)}{\BOX(-\D)} \lra
	\frac{\BOX(\D)}{\BOX}\lra 0.
	\end{equation*}
	Taking invariant pushforward functor we get the following commutative diagram: 
	\begin{equation*}
	\begin{tikzcd}
	0 \arrow[r] & p_*^{\Gamma}\left(\frac{\BOX}{\BOX(-\D)}\right)\arrow[r] & p_*^{\Gamma}\left(\frac{\BOX(2\D)}{\BOX(-\D)}\right) \arrow[r] &p_*^{\Gamma}\left(\frac{\BOX(2\D)}{\BOX}\right) \arrow[r] &0\\
	0 \arrow[r] & p_*^{\Gamma}\left(\frac{\BOX}{\BOX(-\D)}\right)\arrow[r] \arrow[u, equal] & p_*^{\Gamma}\left(\frac{\BOX(\D)}{\BOX(-\D)}\right) \arrow[r] \arrow[u, hook] &p_*^{\Gamma}\left(\frac{\BOX(\D)}{\BOX}\right)\arrow[u, hook] \arrow[r] &0.
	\end{tikzcd}
	\end{equation*}

	Now there is a natural  nondegenerate bilinear form: 
	\begin{equation}\label{eqn:formres1}
	\langle\, ,\, \rangle:  \frac{\BOX(\D)}{\BOX(-\D)}\otimes
	\frac{\BOX(2\D)}{\BOX} \lra \mathcal{O}_{\widetilde{X}}.
	\end{equation}
	The above form $\langle \,  ,\, \rangle$ vanishes identically restricted to $ \frac{\BOX}{\BOX(-\D)}\otimes \frac{\BOX(\D )}{\BOX}$ and hence it descends to a form 
	\begin{equation}\label{eqn:formres2}
	\langle\,  ,\, \rangle: \frac{\BOX}{\BOX(-\D)}\otimes \frac{\BOX}{\BOX(\D)}
	\lra \mathcal{O}_{\widetilde{X}}, \ \mbox{and} \ 
	\langle\, ,\, \rangle: \frac{\BOX(\D)}{\BOX}\otimes \frac{\BOX(\D)}{\BOX}
	\lra \mathcal{O}_{\widetilde{X}}.
	\end{equation}
	Under the canonical identifications of $\frac{\BOX(\D)}{\BOX}\cong \End(\WE)$ we get
	\begin{align}
	\frac{\BOX}{\BOX(\D)}&\cong \End(\WE)\otimes \mathcal{T}_{\widetilde{X}/T}, \ \mbox{and}\
	\frac{\BOX}{\BOX(-\D)}\cong \End(\WE)\otimes \Omega_{\widetilde{X}/T}.
	\end{align}
	Lemma B.2.8 in \cite{BBMP20} shows that the bilinear form $\langle\,
	,\, \rangle $ in \eqref{eqn:formres1} and \eqref{eqn:formres2} can be identified as 
	\begin{align}\label{eqn:pushform1}
	-\kappa_{\mathfrak{gl}(r)}:\End(\WE)\otimes \End(\WE)\lra \mathcal{O}_{\widetilde{X}}\ \mbox{and} \
	\kappa_{\mathfrak{gl}(r)}:\End(\WE)\otimes
	\mathcal{T}_{\widetilde{X}/T}\otimes \End(\WE)\otimes
	\Omega_{\widetilde{X}/T}\lra \mathcal{O}_{\widetilde{X}},
	\end{align}where $\kappa_{\mathfrak{gl}(r)}$ is the trace of the product of the two endomorphisms. 
	Now the invariant pushforward of $\langle\, ,\, \rangle$ induces a nondegenerate $\mathcal{O}_X(-D)$ valued form  
	\begin{equation*}\label{eqn:invpushform0}
	\langle\, ,\, \rangle: p_*^{\Gamma}\frac{\BOX(\D)}{\BOX(-\D)}\otimes
	p_*^{\Gamma}\left(\frac{\BOX(2\D)}{\BOX}(-\widehat{D})\right)\lra p_*^{\Gamma}\mathcal{O}_{\widetilde{X}}(-\widehat{D})\cong \mathcal{O}_X(-D)
	\end{equation*} which gives the following nondegenerate form 
	\begin{equation*}\label{eqn:invpushform1}
	\langle\, ,\,  \rangle: p_*^{\Gamma}\frac{\BOX(\D)}{\BOX(-\D)}\otimes
	\left(p_*^{\Gamma}\left(\frac{\BOX(2\D)}{\BOX}(-\widehat{D})\right)\right)(D)\lra \mathcal{O}_X
	\end{equation*}
	that restricts to 
	\begin{align}\label{eqn:invpushform2}
	\langle\, ,\, \rangle : p_*^{\Gamma}\left(\frac{\BOX(\D)}{\BOX} \right)
	\otimes \left(p_*^{\Gamma}\left(\frac{\BOX(\D)}{\BOX}(-\widehat{D})
	\right)\right)(D)\lra \mathcal{O}_X.
	\end{align}
	\begin{align}\label{eqn:invpushform3}
	\langle\, ,\, \rangle: p_*^{\Gamma}\left(\frac{\BOX}{\BOX(-\D)} \right)
	\otimes \left(p_*^{\Gamma}\left(\frac{\BOX}{\BOX(\D)}(-\widehat{D})
	\right)\right)(D)\lra \mathcal{O}_X.
	\end{align}
	The identification of parabolic and strongly parabolic endomorphism as
	invariant pushforward and invariants pushforward of the identification
	in \eqref{eqn:pushform1} tell us that the form $\langle\, ,\, \rangle $ in \eqref{eqn:invpushform2} can be identified with 
	\begin{equation*}
	-\kappa_{\mathfrak{gl}(r)}: \ParEnd(\WE)\otimes \SParEnd(\WE)(D)\lra \mathcal{O}_X.
	\end{equation*}
	Similarly the form in \eqref{eqn:invpushform3} can be identified with 
	\begin{equation*}
	\langle\, ,\,  \rangle: p_*^{\Gamma}\left(\End(\WE)\otimes
	\Omega_{\widetilde{X}/T}\right)\otimes
	\left(p_*^{\Gamma}\left(\End(\WE)\otimes
	\mathcal{T}_{\widetilde{X}/T}(-\widehat{D})\right)\right)(D)\lra \mathcal{O}_X
	\end{equation*}
	Now the identification of 
	$\left(p_*^{\Gamma}\left(\End(\WE)\otimes \mathcal{T}_{\widetilde{X}/T}(-\widehat{D})\right)\right)(D)\cong \mathcal{T}_{X/T}$ gives the following commutative 
	\begin{equation*}
	\begin{tikzcd}
	\langle\, ,\, 
	\rangle:  \left(p_*^{\Gamma}\left(\End(\WE)\otimes \Omega_{\widetilde{X}/T}\right)\right)\otimes \left(p_*^{\Gamma}\left(\End(\WE)\otimes \mathcal{T}_{\widetilde{X}/T}(-\widehat{D})\right)\right)(D)\arrow[r] \arrow[d, "\cong"] &\mathcal{O}_X\arrow[d, equal] \\ 
	\Omega_{{X}/T} \otimes \mathcal{T}_{X/T}\arrow[r] & \mathcal{O}_X.
	\end{tikzcd}
	\end{equation*}
	This induces an isomorphism of
	${}^{par}\mathcal{B}_{0,X/T}^{-1}(\mathcal{E})$ with
	${}^{spar}\widetilde{\At}_{X/T}(\mathcal{E})$ 
	that restricts to $\id$ on $\Omega_{X/T}$ and $-\id$ on $\Par_0(\Ecal)$. 
\end{proof}

The following proposition connects $R^1\pi_*$ of the parabolic Ginzburg dgla with $R^1\widetilde{\pi}_*$ of the dgla constructed by Bloch-Esnault.
\begin{proposition}\label{prop:comparingdgla}
	There are inclusion maps $\Omega_{X/T} \rightarrow
	p_*\Omega_{\widetilde{X}/T}$ and $\ParEnd_{0}(\mathcal{E})\cong
	p_*^{\Gamma}\End_{0}(\mathcal{E})\hookrightarrow p_*\End_{0}(\widetilde{{\mathcal{E}}})$
	that extend to a map of the following exact sequences:
	\begin{center}
		\begin{tikzcd}
			0 \arrow[r] & p_*\Omega_{\widetilde{X}/T} \arrow[r]&
			p_*\mathcal{B}_{0,\widetilde{X}/T}^{-1}(\widetilde{{\mathcal{E}}}) \arrow[r] &
			p_*\End_{0}(\widetilde{{\mathcal{E}}}) \arrow[r] & 0\\
			0 \arrow[r] & \Omega_{X/T}\arrow[u, hook] \arrow[r]&
			{}^{par}\mathcal{B}_{0,{X}/T}^{-1}(\mathcal{E}) \arrow[u, hook]
			\arrow[r] & \ParEnd_{0}(\mathcal{E})\arrow[u,hook]\arrow[r] &0.
		\end{tikzcd}
	\end{center}
\end{proposition}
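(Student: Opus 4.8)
The plan is to deduce the whole statement from two elementary properties of the finite ramified Galois cover $p\colon \widetilde X\to X$. First, since $p$ is finite and hence affine, $p_*$ is exact on quasi-coherent sheaves; applying it to the traceless Bloch--Esnault sequence $0\to\Omega_{\widetilde X/T}\to\mathcal B_{0,\widetilde X/T}^{-1}(\widetilde{\mathcal E})\to\End_0(\widetilde{\mathcal E})\to 0$ on $\widetilde X$ therefore produces the exact top row of the proposition. Second, because $\Gamma$ is a finite group and we are in characteristic zero, the averaging operator $\tfrac{1}{|\Gamma|}\sum_{\gamma\in\Gamma}\gamma$ exhibits the invariant pushforward $p_*^{\Gamma}\mathcal F=(p_*\mathcal F)^{\Gamma}$ as a canonical direct summand of $p_*\mathcal F$, for every $\Gamma$-equivariant coherent sheaf $\mathcal F$ on $\widetilde X$. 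This gives a functorial, exact, $\mathcal O_X$-linear inclusion $p_*^{\Gamma}\mathcal F\hookrightarrow p_*\mathcal F$, natural in $\mathcal F$. The three vertical arrows of the proposition will all be instances of this single inclusion of invariants, so the entire map of exact sequences will follow once the bottom row is identified with $p_*^{\Gamma}$ of the upstairs sequence.

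Concretely, I would first record the identifications on the outer terms: the isomorphism $\Omega_{\widetilde X/T}\cong p^*\Omega_{X/T}(\widehat D)$ gives $p_*^{\Gamma}\Omega_{\widetilde X/T}\cong\Omega_{X/T}$ (as already used in \eqref{eqn:trace}), while $p_*^{\Gamma}\End_0(\widetilde{\mathcal E})\cong\ParEnd_0(\mathcal E)$ is the defining identification of the parabolic endomorphism sheaf in Section~\ref{sec:par-vector} (Seshadri, \cite{SeshadriI}). Composing each with the inclusion of invariants yields the two outer vertical maps $\Omega_{X/T}\hookrightarrow p_*\Omega_{\widetilde X/T}$ and $\ParEnd_0(\mathcal E)\hookrightarrow p_*\End_0(\widetilde{\mathcal E})$. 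The heart of the matter is then the middle identification
\[
p_*^{\Gamma}\,\mathcal B_{0,\widetilde X/T}^{-1}(\widetilde{\mathcal E})\;\cong\;{}^{par}\mathcal B_{0,X/T}^{-1}(\mathcal E),
\]
after which the middle vertical arrow is once again the inclusion $p_*^{\Gamma}\hookrightarrow p_*$.

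To establish this central isomorphism I would compare the two constructions directly. The upstairs complex $\mathcal B_{0,\widetilde X/T}^{-1}(\widetilde{\mathcal E})$ is assembled from the trace complex ${}^{tr}\widetilde{\mathcal A}_{\widetilde X/T}(\widetilde{\mathcal E})^{-1}$ by a pullback along $\End_0(\widetilde{\mathcal E})\hookrightarrow\At_{\widetilde X/T}(\widetilde{\mathcal E})$ and a pushout along the trace $\End(\widetilde{\mathcal E})\otimes\Omega_{\widetilde X/T}\to\Omega_{\widetilde X/T}$; the parabolic complex ${}^{par}\mathcal B_{0,X/T}^{-1}(\mathcal E)$ is built by the same two operations carried out \emph{after} applying $p_*^{\Gamma}$, as recorded in the display preceding Theorem~\ref{thm:maindgla} and in \eqref{eqn:parabolicdgla}. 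Since $p_*^{\Gamma}$ is exact, it commutes with both pushout and pullback of short exact sequences; moreover $p_*^{\Gamma}$ carries $\End_0(\widetilde{\mathcal E})\hookrightarrow\At_{\widetilde X/T}(\widetilde{\mathcal E})$ to $\ParEnd_0(\mathcal E)\hookrightarrow{}^{par}\At_{X/T}(\mathcal E)$ and the trace to the parabolic trace of \eqref{eqn:trace} via $p_*^{\Gamma}\Omega_{\widetilde X/T}\cong\Omega_{X/T}$. Consequently the two constructions produce isomorphic extensions, giving the claimed identification.

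The only point requiring genuine care — and the main obstacle — is this interchange of $p_*^{\Gamma}$ with the pushout/pullback bookkeeping, i.e.\ that the order in which one applies invariants, the trace-pushout, and the $\End_0$-restriction is immaterial. This is harmless: the pushout is performed on the sub-sheaf (the $\Omega$-term) while the pullback acts on the quotient (the endomorphism term), so these two operations commute with one another, and the exact functor $p_*^{\Gamma}$ commutes with each separately. Granting the identification, applying the natural inclusion $p_*^{\Gamma}\hookrightarrow p_*$ termwise to the upstairs exact sequence delivers exactly the desired map of exact sequences, and the two resulting squares commute by naturality of that inclusion in the sheaf argument. This completes the proposed proof.
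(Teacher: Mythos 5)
Your proposal is correct and takes essentially the same route as the paper's proof: both identify ${}^{par}\mathcal{B}_{0,X/T}^{-1}(\mathcal{E})$ with $p_*^{\Gamma}\mathcal{B}_{0,\widetilde{X}/T}^{-1}(\widetilde{\mathcal{E}})$ by commuting the invariant pushforward with the pullback along $\End_0(\widetilde{\mathcal{E}})\hookrightarrow \At_{\widetilde{X}/T}(\widetilde{\mathcal{E}})$ and the pushout along the trace, and then obtain all three vertical arrows from the canonical inclusion $p_*^{\Gamma}\hookrightarrow p_*$. Your explicit appeals to affineness of $p$ and to the characteristic-zero averaging idempotent simply make precise the exactness that the paper uses implicitly in its diagram comparison (where the middle identification is phrased as both sheaves being the same sub-extension of $p_*^{\Gamma}$ of the trace complex, followed by the trace pushout in a three-dimensional diagram).
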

\begin{proof}First we prove that
	$p_*^{\Gamma}(\widetilde{\Bcal}_{0,\widetilde{X}/T}^{-1}(\WE))\cong
	{}^{par}\widetilde{\mathcal{B}}_{0,{X}/T}^{-1}(\mathcal{E})$. 
	Consider the short exact sequence 
	\begin{equation*}
	0\lra p_*(\End(\WE)\otimes \Omega_{\widetilde{X}/T})\lra 
	p_*(\widetilde{\Bcal}_{0,\widetilde{X}/T}^{-1}(\WE))\lra p_*(\End_0(\WE))\lra 0.
	\end{equation*}
	Taking invariants with respect to $\Gamma$, we get 
	\begin{equation*}
	\begin{tikzcd}[column sep=small]
	0 \arrow[r] & p_*(\End(\WE)\otimes\Omega_{\widetilde{X}/T}) \arrow[r] & 
	p_*\widetilde{\Bcal}_{0,\widetilde{X}/T}^{-1}(\WE)\arrow[r] & p_*(\End_0(\WE))\arrow[r]&0\\
	0 \arrow[r] &
	p_*^{\Gamma}(\End(\WE)\otimes\Omega_{\widetilde{X}/T})\arrow[d,equal]
	\arrow[r]\arrow[u, hook] &
	p_*^{\Gamma}(\widetilde{\Bcal}_{0,\widetilde{X}/T}^{-1}(\WE))\arrow[r]\arrow[u,
	hook]\arrow[d,hook] & p_*^{\Gamma}(\End_{0}(\WE))\arrow[r]\arrow[u, hook]\arrow[d,hook]&0\\
	0\arrow[r]&	p_*^{\Gamma}(\End(\WE)\otimes\Omega_{\widetilde{X}/T})\arrow[r]&p_*^{\Gamma}({}^{tr}\widetilde{\mathcal{A}}_{\widetilde{X}/T}(\WE)^{-1})\arrow[r]& p_*^{\Gamma}(\operatorname{At}_{\widetilde{X}/T}(\WE)) \arrow[r] &0.
	\end{tikzcd}
	\end{equation*}
	The inclusion of the second row into the third follows from the invariant pushforward of the first two
	rows of the diagram \eqref{eqn:3rowdiagram}. 
	Since both ${}^{par}\widetilde{\Bcal}_{0,X/T}^{-1}(\mathcal{E})$
	and $p_*^{\Gamma}(\widetilde{\Bcal}_{0,\widetilde{X}/T}^{-1}(\WE))$ are extensions
	of $p_*^{\Gamma}(\End_{0}(\WE))$ by $p_*^{\Gamma}(\End(\WE)\otimes \Omega_{\widetilde{X}/T})$
	obtained as a sub extension of
	$p_*^{\Gamma}(\operatorname{At}_{\widetilde{X}/T}(\WE))$ by
	$p_*^{\Gamma}(\End(\WE)\otimes \Omega_{\widetilde{X}/T})$ via the
	inclusion $p_*^{\Gamma}(\End_{0}(\WE))\hookrightarrow
	p_*^{\Gamma}(\operatorname{At}_{\widetilde{X}/T}(\WE))$, it
	follows that 
	$p_*^{\Gamma}(\widetilde{\Bcal}_{0,\widetilde{X}/T}^{-1}(\WE))\cong
	{}^{par}\widetilde{\mathcal{B}}_{0,X/T}^{-1}(\mathcal{E}).$
	Now the above  gives the following commutative  diagram:
	\begin{equation*}
	\begin{tikzcd}
	0\arrow[r]&	p_*(\End(\WE)\otimes \Omega_{\widetilde{X}/T}) \arrow[r] &
	p_*\widetilde{\Bcal}_{0,\widetilde{X}/T}^{-1}(\WE)\arrow[r] & p_*(\End_{0}(\WE)) \arrow[r] &0
	\\
	0 \arrow[r]&	p_*^{\Gamma}\left(\End(\WE))\otimes
	\Omega_{\widetilde{X}/T} \right)\arrow[r]\arrow[u, hook]
	&{}^{par}\widetilde{\mathcal{B}}_{0,\widetilde{X}/T}^{-1}(\mathcal{E})\arrow[r]\arrow[u,
	hook] & \ParEnd_{0}(\mathcal{E})\arrow[u, hook] \arrow[r] &0.
	\end{tikzcd}
	\end{equation*}
	Pushing forward with respect to the trace of endomorphism $\operatorname{tr}$, the above commutative
	diagram implies the existence of the following commutative diagram all of whose rows are short exact sequences: 
	\begin{equation*}
	\adjustbox{scale=1}{
		\begin{tikzcd}[column sep=0.1]
		p_*(\End(\WE)\otimes \Omega_{\widetilde{X}/T})
		\arrow[rr,hook]\arrow[dd,"p_*\operatorname{tr}"] &&
		p_*\widetilde{\Bcal}_{0,\widetilde{X}/T}^{-1}(\WE)\arrow[rr,twoheadrightarrow]\arrow[dd,"\operatorname{tr}_*"
		near start] && p_*(\End_{0}(\WE))\arrow[dd,equal]&\\
		&	p_*^{\Gamma}(\End(\WE))\otimes
		\Omega_{\widetilde{X}/T})\arrow[rr,crossing over,
		hook]\arrow[ul, hook]
		&&{}^{par}\widetilde{\mathcal{B}}_{0,{X}/T}^{-1}(\mathcal{E})\arrow[rr,crossing
		over, twoheadrightarrow]\arrow[lu, hook] && \ParEnd_{0}(\mathcal{E})\arrow[dd,equal]\arrow[ul, hook]\\
		p_*\Omega_{\widetilde{X}/T} \arrow[rr,hook]&&
		p_*\Bcal_{0,\widetilde{X}/T}^{-1}(\WE)\arrow[rr] && p_*(\End_{0}(\WE))&  \\
		&\Omega_{X/T}\arrow[ul, hook]\arrow[rr,hook]
		\arrow[from=uu,crossing over, "\operatorname{tr}" near start
		]&& {}^{par}\mathcal{B}_{0,{X}/T}^{-1}(\mathcal{E})\arrow[ul, hook]
		\arrow[rr,twoheadrightarrow]\arrow[from=uu,crossing over,
		"\operatorname{tr}_*" near start]&& \ParEnd_{0}(\mathcal{E}).\arrow[ul, hook]
		\end{tikzcd}
	}
	\end{equation*}
	The bottom level of the above diagram gives the required result.
\end{proof}

Recall that we have a diagram relating the families of curves parametrized by $T$:
\begin{center}
	\begin{tikzcd}
		\widetilde{X} \arrow[r, "p"] \arrow[dr, "\widetilde{\pi}"'] & X \arrow[d, "\pi"]\\
		&T
	\end{tikzcd}
\end{center}
Taking $R^1\pi_*$ of all the terms of the commutative diagram in Proposition \ref{prop:comparingdgla}, and using the fact that $R^1\widetilde{\pi}=R^1\pi\circ p_*$, we get the following proposition:

\begin{proposition}
	\label{cor:Jpetermancorollary}The following diagram is commutative:
	\begin{equation*}
	\begin{tikzcd}
	0 \arrow[r] & R^1\widetilde{\pi}_*\Omega_{\widetilde{X}/T}\simeq
	\mathcal{O}_T \arrow[r]&
	R^1\widetilde{\pi}_*(\Bcal_{0,\widetilde{X}/T}^{-1}(\widetilde{\mathcal{E}}))\arrow[r]
	& R^1\widetilde{\pi}_* \End_{0}(\widetilde{{\mathcal{E}}}) \arrow[r] & 0
	\\
	0 \arrow[r] & R^1{\pi}_*\Omega_{{X}/T}\simeq \mathcal{O}_T
	\arrow[r]\arrow[u, equal]&
	R^1{\pi}_*({}^{par}\mathcal{B}_{0,{X}/T}^{-1}({\mathcal{E}}))\arrow[r]\arrow[u,
	hook] & R^1{\pi}_* \ParEnd_{0}({{\mathcal{E}}}) \arrow[u, hook] \arrow[r] & 0.
	\end{tikzcd}
	\end{equation*}
\end{proposition}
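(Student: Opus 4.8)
The plan is to obtain the asserted diagram by simply applying $R^1\pi_\ast$ to the commutative diagram of Proposition~\ref{prop:comparingdgla}, so that commutativity of the two squares becomes a formal consequence of the functoriality of higher direct images; the genuine work lies in correctly reinterpreting the top row and in pinning down the left-hand vertical arrow. The decisive input is that $p\,:\,\widetilde{X}\,\longrightarrow\, X$ is finite, hence affine, so that $p_\ast$ is exact and $R^ip_\ast\,=\,0$ for $i>0$. Consequently the Grothendieck spectral sequence for the composition $\widetilde{\pi}\,=\,\pi\circ p$ degenerates and yields $R^1\widetilde{\pi}_\ast\,\simeq\, R^1\pi_\ast\circ p_\ast$. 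Applying this to $\Omega_{\widetilde{X}/T}$, $\Bcal_{0,\widetilde{X}/T}^{-1}(\widetilde{\mathcal{E}})$ and $\End_0(\widetilde{\mathcal{E}})$ identifies the top row of the claimed diagram, namely $R^1\widetilde{\pi}_\ast$ of the Bloch--Esnault sequence on $\widetilde{X}$, with $R^1\pi_\ast$ of the $p_\ast$-pushed top row of Proposition~\ref{prop:comparingdgla}. The bottom row is by definition $R^1\pi_\ast$ of the bottom row there, and each vertical arrow is $R^1\pi_\ast$ of the corresponding inclusion, whence commutativity is immediate.

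Next I would check that both rows remain short exact after pushing forward. Right-exactness is automatic, since the kernel sheaves $p_\ast\Omega_{\widetilde{X}/T}$ and $\Omega_{X/T}$ have vanishing $R^2\pi_\ast$ because the fibres are curves. Left-exactness amounts to the vanishing of the relevant connecting map; for the bottom row this follows from $R^0\pi_\ast\ParEnd_0(\mathcal{E})\,=\,0$, as a regularly stable parabolic bundle has only scalar parabolic endomorphisms and hence no nonzero trace-free ones, while the top row is the traceless Bloch--Esnault Atiyah sequence of the cover family $\widetilde{X}\,\longrightarrow\, T$, whose short exactness is part of the determinant-of-cohomology theory recalled in Section~\ref{sec:determinant}.

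For the vertical arrows I would invoke that $\Gamma$ is finite and we work in characteristic zero, so that the averaging (Reynolds) operator splits $p_\ast^{\Gamma}\mathcal{F}$ off as a direct summand of $p_\ast\mathcal{F}$ for every $\Gamma$-linearized $\mathcal{F}$. Thus $R^1\pi_\ast$ of each inclusion $p_\ast^{\Gamma}\mathcal{F}\hookrightarrow p_\ast\mathcal{F}$ is split injective, giving the two ``hook'' arrows. Applied to the leftmost column, the same argument shows that the map $R^1\pi_\ast\Omega_{X/T}\,\simeq\,R^1\pi_\ast p_\ast^{\Gamma}\Omega_{\widetilde{X}/T}\,\longrightarrow\, R^1\widetilde{\pi}_\ast\Omega_{\widetilde{X}/T}$ is a split injection of line bundles, each free of rank one by relative Serre duality, hence an isomorphism; under the canonical residue/Serre-duality trivializations of the two sides it is the identity.

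I expect this last point to be the only real obstacle. The inclusion $\Omega_{X/T}\,\hookrightarrow\, p_\ast\Omega_{\widetilde{X}/T}$ a priori induces on $H^1$ a map differing from the identity by a nonzero scalar built from the degree and ramification of $p$ (as reflected in $\int_{\widetilde{C}}p^\ast\omega\,=\,|\Gamma|\int_{C}\omega$); the force of ``equal'' is precisely that the identifications $p_\ast^{\Gamma}\Omega_{\widetilde{X}/T}\,\cong\,\Omega_{X/T}$ and $p_\ast^{\Gamma}\mathcal{O}_{\widetilde{X}}(-\widehat{D})\,\cong\,\mathcal{O}_X(-D)$, together with the compatibility of the residue pairing $\widetilde{\operatorname{Res}}$ with $\Gamma$-invariant pushforward established in the proof of Theorem~\ref{thm:maindgla}, conspire to cancel this factor. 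Verifying that cancellation on local generators near the ramification divisor is the one computation I would carry out in full.
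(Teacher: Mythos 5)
Your proposal is correct and is essentially the paper's own proof: the paper establishes this proposition with nothing more than the sentence preceding it, namely apply $R^1\pi_*$ to the commutative diagram of Proposition \ref{prop:comparingdgla} and use $R^1\widetilde{\pi}_* = R^1\pi_*\circ p_*$ (valid since $p$ is finite, hence affine), which is exactly your first paragraph. Your additional verifications --- exactness of both pushed-forward rows via vanishing of $R^0$ of traceless (parabolic) endomorphisms and of $R^2\pi_*$ in relative dimension one, split injectivity of the hooks from the Reynolds operator, and the factor $|\Gamma|$ in the left vertical arrow that the trace identifications must absorb --- go beyond what the paper records, and the last point is a genuine normalization subtlety that the paper glosses over by simply writing ``equal''.
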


\subsection{Parabolic Atiyah algebras and moduli of parabolic bundles}
Let $\Ccal\rightarrow S$ be a family of curves, and let $\widehat{\Ccal}\rightarrow S$ be a family of $\Gamma$-covers. 
Consider the relative family $\widehat{M}^{s}_{\SL_r}$ parametrizing the moduli space of stable $\SL_r$ bundles on 
$\widehat{\Ccal}\rightarrow S$. Let $M^{\bm\tau,s}_{\SL_r,s}$ and ${M}^{par,s}_{\SL_r}$ be the relative moduli 
spaces of stable $(\Gamma, \SL_r)$ and stable parabolic bundles on $\widehat{\Ccal}\rightarrow S$ and 
$\Ccal\rightarrow S$ respectively. Without loss of generality assume that the interior of $\widehat{M}_{\SL_r}^{ss}$ is non-empty, otherwise our main theorem is trivially true. 

Now by the discussion in \cite[\S 7.2]{BMW1}, the invariant
pushforward functor induces an isomorphism between $M^{\bm\tau,s}_{\SL_r}$ (respectively, the semi-stable moduli
space $M^{\bm\tau,ss}_{\SL_r}$)
and ${M}^{par,s}_{\SL_r}$ (respectively, ${M}^{par,ss}_{\SL_r}$). The data ${\bm \tau}$ and the covering family
$\widehat{\mathcal{C}}$ depend on the data of the parabolic weights that defines the parabolic semistability. By
\cite{Biswasduke}, we get a map 
$\phi: M^{\bm\tau,ss}_{\SL_r}\rightarrow \widehat{M}^{ss}_{\SL_r}$ which extends to
a map $\widehat{\Ccal}\times_S M^{\bm\tau,ss}_{\SL_r}\rightarrow
\widehat{\Ccal} \times_{S} \widehat{M}^{ss}_{\SL_r}$. The map $\phi$ may
not preserve stability, however $\phi$ being finite, the complement of the
inverse image $Y^{\bm \tau,s}_{\SL_r}:=\phi^{-1}\widehat{M}^{s}_{\SL_r}$ of
the stable locus has codimension at least two in $M_{\SL_r}^{\tau,s}$
provided genus of the orbifold curve $\mathscr{C}=[\widehat{C}/\Gamma]$ is
at least $2$ if $r\neq 2$ and at least $3$ if $r=2$. We refer the reader to
\cite[Lemma 8.3]{BMW1} for more details. 

We have the following diagrams that connect all the objects described above:
\begin{equation*}
\adjustbox{scale=0.9}{
	\begin{tikzcd}
	\widehat{\Ccal}\times_S \widehat{M}^{s}_{\SL_r}\arrow[ddd,"\widehat{\pi}_w"] \arrow[rrrrr,"\widehat{\pi}_n", near
	start]&&&&&\widehat{M}^{s}_{\SL_r}\arrow[ddd,"\widehat{\pi}_e"]\\
	& {\widehat{\Ccal}\times_S 
		M^{\bm\tau,s}_{\SL_r} \atop {\overset{\rotatebox{90}{$\subseteq$}}{\widehat{\Ccal}\times_S 
				Y^{\bm\tau,s}_{\SL_r}}}} \arrow[lu,shift left=3ex, "\id\times \phi"]\arrow[lu, dashed, "\id \times
	\phi"'] \arrow[rd,"p\times \cong", ]\arrow[rr,"\widetilde{\pi}_n", near
	start] & &{ 
		M^{\bm\tau,s}_{\SL_r} \atop {\overset{\rotatebox{90}{$\subseteq$}}{ 
				Y^{\bm\tau,s}_{\SL_r}}}}\arrow[rd,"\cong"]\arrow[rru,dashed,"\phi"]\arrow[rru,shift right=3ex, "\phi"']&\\
	&&{\Ccal\times_S {M}^{par,s}_{\SL_r} \atop {\overset{\rotatebox{90}{$\subseteq$}}{{\Ccal \times_SY_{\SL_r}^{par,s}}}}} && {{M}^{par,s}_{\SL_r} \atop {\overset{\rotatebox{90}{$\subseteq$}}{{Y_{\SL_r}^{par,s}}}}}\arrow[from=ll,"{\pi}_n", near start, crossing over]&&\\
	\widehat{\Ccal}\arrow[rd,equal]\arrow[rrrrr,"\widehat{\pi}_s" description] &&&&& S\arrow[from=uuu,"\widehat{\pi}_e", crossing over]\\
	& \widehat{\Ccal}\arrow[from=uuu,"\widetilde{\pi}_w", crossing over, near start]\arrow[rr,"\widetilde{\pi}_s", near start]\arrow[rd,"p"]& &S \arrow[rru,equal]\arrow[rd,equal] \arrow[from=uuu,"\widetilde{\pi}_{e}"]&\\
	&&\Ccal\arrow[rr,,"{\pi}_s", near start] \arrow[from=uuu,"{\pi}_w", crossing over, near start]&& S\arrow[from=uuu,"{\pi}_e",crossing over, near end]&
	\end{tikzcd}}
\end{equation*}
The rational maps are regular over $Y_{\SL_r}^{\bm \tau,s}$ which will be also denoted by the same notation.
The image of $Y_{\SL_r}^{\bm \tau,s}$ under the invariant pushforward isomorphism of $M_{\SL_r}^{\bm \tau,ss}
\cong M_{\SL_r}^{par,ss}$ will be denoted by $Y_{\SL_r}^{par,s}$. By definition $ Y_{\SL_r}^{par,s}
\hookrightarrow  M_{\SL_r}^{par,s}$.  

Let $\widehat{\mathcal{E}}$ be the universal bundle (which exist in the \'etale toplogy) on $\widehat{\Ccal}\times_S\widehat{M}^{ss}_{\SL_r}$ and 
$\widetilde{ \mathcal{E}}$ be its pull-back to $\widehat{\Ccal}\times_S M^{\bm\tau,s}_{\SL_r}$. We denote by 
$\Ecal$ the universal parabolic bundle which we can assume to exist without loss of generality (see Remark 
\ref{rem:adjoint}). As in the diagram let $\widetilde{\pi}_n: \widehat{C} \times_S Y_{\SL_r}^{\bm \tau,s} \lra Y_{\SL_r}^{\bm \tau,s}$ denote the projection and similarly consider the projection $\widehat{\pi}_n: \widehat{C} \times_S \widehat{M}_{\SL_r}^{s} \lra \widehat{M}_{\SL_r}^{s}$

Let $\mathcal{L}$ be the determinant of cohomology line bundle on
$\widehat{M}^{s}_{\SL_r}$. Now, as before, combining the results of
Baier-Bolognesi-Martens-Pauly \cite{BBMP20}, Beilinson-Schechtman
\cite{BS88}, Bloch-Esnault \cite{BE}, and Sun-Tsai \cite{ST}, we get an isomorphism of the Atiyah algebras $\At_{\widehat{M}^{s}_{\SL_r}/S}(\mathcal{L}^{-1})$ with $R^1\widehat{\pi}_{n*}
(\Bcal_{0,\widehat{\mathcal{C}}\times_S\widehat{M}_{\SL_r}^s/\widehat{M}^s_{\SL_r}}^{-1}(\widehat\Ecal))$ which makes the diagram of  fundamental
sequences of Atiyah algebras commute: 
\begin{equation*}
\begin{tikzcd}
0 \arrow[r] & \mathcal{O}_{\widehat{M}^{s}_{\SL_r}} \arrow[r] \arrow[d, equal] & \At_{\widehat{M}^{s}_{\SL_r}/S}(\mathcal{L}^{-1}) \arrow[d, "\cong"]\arrow[r] & \mathcal{T}_{\widehat{M}^{s}_{\SL_r}/S}\arrow[d,equal] \arrow[r] &0 \\ 
0 \arrow[r]& \mathcal{O}_{\widehat{M}^{s}_{\SL_r}} \arrow[r] &
R^1\widehat{\pi}_{n*}(\mathcal{B}_{0,\widehat{\mathcal{C}}\times_S\widehat{M}_{\SL_r}^s/\widehat{M}^s_{\SL_r}}^{-1}(\widehat \Ecal)) \arrow[r] & \mathcal{T}_{\widehat{M}^{s}_{\SL_r}/S} \arrow[r] &0.
\end{tikzcd}
\end{equation*}
Pulling back by $\phi$  we get an
isomorphism of $\phi^*\At_{\widehat{M}^{s}_{\SL_r}/S}(\mathcal{L})$ and $\phi^*R^1\widehat{\pi}_{n*}(\Bcal_{0,\widehat{\mathcal{C}}\times_S\widehat{M}_{\SL_r}^s/\widehat{M}^s_{\SL_r}}^{-1}(\widehat{\Ecal}))$. Moreover the base change theorems implies that the later is isomorphic to
$R^1\widetilde{ \pi}_{n*}(\mathcal{B}_{0,\widehat{\mathcal{C}}\times_SY^{\bm \tau,s}_{\SL_r}/Y^{\bm \tau ,s }_{\SL_r}}^{-1}(\widehat{\Ecal}))$. (Observe that $\phi^*$ of an Atiyah algebra may not be an Atiyah algebra.) We have the following result.


\begin{theorem} \label{thm:atiyah-det-be}
	There is an  isomorphism of the relative Atiyah algebras
	$\At_{M^{\bm  \tau,s}_{\SL_r}/S}(\phi^*\mathcal{L}^{-1})$ with
	$R^1\pi_{n*}({}^{par}\mathcal{B}_{0,\mathcal{C}\times_SM^{par,s}_{\SL_r}/M^{par,s}_{\SL_r}}^{-1}(\Ecal))$ that restricts to the 
	identity map on $\Omega_{M^{par,s}_{\SL_r}/S}$.
\end{theorem}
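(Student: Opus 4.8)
The plan is to bootstrap the parabolic statement from the $\SL_r$ isomorphism $\At_{\widehat{M}^{s}_{\SL_r}/S}(\mathcal{L}^{-1})\simeq R^1\widehat{\pi}_{n*}(\Bcal^{-1}_{0}(\widehat{\Ecal}))$ recalled immediately before the theorem, using Proposition \ref{cor:Jpetermancorollary} to pass between the $\Gamma$-cover and parabolic Bloch--Esnault sheaves. Throughout I identify $M^{\bm\tau,s}_{\SL_r}\cong M^{par,s}_{\SL_r}$ via the invariant-pushforward isomorphism of \cite[\S 7.2]{BMW1}, so $\widetilde\pi_n$ and $\pi_n$ share a common base, and I write $\Bcal^{-1}_0(\widetilde{\mathcal{E}})$ and ${}^{par}\Bcal^{-1}_0(\Ecal)$ for the two degree $-1$ Bloch--Esnault sheaves with the relative subscripts suppressed, where $\widetilde{\mathcal{E}}=f^*\widehat{\Ecal}$ for $f=\id_{\widehat{\Ccal}}\times\phi$.

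First I would work over the open locus $Y^{\bm\tau,s}_{\SL_r}=\phi^{-1}\widehat{M}^{s}_{\SL_r}$, on which $\phi$ is regular with image in the stable locus. Pulling the $\SL_r$ isomorphism back along $\phi$ and applying the base-change identity $\phi^*R^1\widehat{\pi}_{n*}(-)\cong R^1\widetilde\pi_{n*}(f^*(-))$ quoted before the theorem yields, over $Y^{\bm\tau,s}_{\SL_r}$, an isomorphism of extensions $\phi^*\At_{\widehat{M}^{s}_{\SL_r}/S}(\mathcal{L}^{-1})\simeq R^1\widetilde\pi_{n*}(\Bcal^{-1}_0(\widetilde{\mathcal{E}}))$ that is the identity on the kernel $\mathcal{O}\cong R^1\widetilde\pi_{n*}\Omega$.

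Next I use the functoriality of the Atiyah algebra of a line bundle (cf.\ \cite{Atiyahconn}): $\At_{M^{\bm\tau,s}_{\SL_r}/S}(\phi^*\mathcal{L}^{-1})$ is the pullback of $\phi^*\At_{\widehat{M}^{s}_{\SL_r}/S}(\mathcal{L}^{-1})$ along the differential $d\phi\colon \mathcal{T}_{M^{\bm\tau,s}_{\SL_r}/S}\to\phi^*\mathcal{T}_{\widehat{M}^{s}_{\SL_r}/S}$, i.e.\ the fibre product of the two fundamental sequences over their common quotient. The conceptual crux is to identify $d\phi$ with the inclusion of $\Gamma$-invariants $R^1\pi_{n*}\ParEnd_0(\Ecal)\cong\big(R^1\widetilde\pi_{n*}\End_0(\widetilde{\mathcal{E}})\big)^{\Gamma}\hookrightarrow R^1\widetilde\pi_{n*}\End_0(\widetilde{\mathcal{E}})$: deformation theory of $(\Gamma,\SL_r)$-bundles identifies $\mathcal{T}_{M^{\bm\tau,s}_{\SL_r}/S}$ with the $\Gamma$-invariant part of $\phi^*\mathcal{T}_{\widehat{M}^{s}_{\SL_r}/S}=R^1\widetilde\pi_{n*}\End_0(\widetilde{\mathcal{E}})$, and $d\phi$ is exactly this inclusion. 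Granting this, Proposition \ref{cor:Jpetermancorollary} exhibits $R^1\pi_{n*}({}^{par}\Bcal^{-1}_0(\Ecal))$ as the pullback of $R^1\widetilde\pi_{n*}(\Bcal^{-1}_0(\widetilde{\mathcal{E}}))$ along the same inclusion, its diagram being a map of extensions that is the identity on $\mathcal{O}$ and the invariant-inclusion on the quotient. Since the pullback of an extension along a fixed map is unique up to canonical isomorphism, combining this with the previous paragraph gives $\At_{M^{\bm\tau,s}_{\SL_r}/S}(\phi^*\mathcal{L}^{-1})\simeq R^1\pi_{n*}({}^{par}\Bcal^{-1}_0(\Ecal))$ over $Y^{\bm\tau,s}_{\SL_r}$, compatibly with the Atiyah-algebra structures and restricting to the identity on $\Omega_{M^{par,s}_{\SL_r}/S}$ (the left vertical arrow of Proposition \ref{cor:Jpetermancorollary} being an equality).

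Finally, to globalize from $Y^{\bm\tau,s}_{\SL_r}$ to all of $M^{\bm\tau,s}_{\SL_r}$ I would invoke the codimension bound set up above: under the stated genus and rank hypotheses the complement of $Y^{\bm\tau,s}_{\SL_r}$ has codimension at least two, while both $\At_{M^{\bm\tau,s}_{\SL_r}/S}(\phi^*\mathcal{L}^{-1})$ and $R^1\pi_{n*}({}^{par}\Bcal^{-1}_0(\Ecal))$ are locally free, hence reflexive, sheaves on the whole normal moduli space. An isomorphism of reflexive sheaves defined off a closed subset of codimension at least two on a normal variety extends uniquely, and the extension preserves both the bracket and the normalization on $\Omega$ because these are determined on a dense open set. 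I expect the main obstacle to be exactly this globalization: one must first know that $\phi^*\mathcal{L}$, and with it its Atiyah algebra, is genuinely defined and locally free across the unstable boundary, and that the identification of $d\phi$ with the $\Gamma$-invariant inclusion holds in families; once these are in place, the reflexive-extension argument upgrades the open isomorphism of the third paragraph to the asserted global one.
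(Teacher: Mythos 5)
Your proposal is correct and follows essentially the same route as the paper: an isomorphism over the open locus $Y^{\bm\tau,s}_{\SL_r}$ obtained from the recalled $\SL_r$ result, base change, and Proposition \ref{cor:Jpetermancorollary}, followed by extending across the codimension-two complement using local freeness and reflexivity. Your middle step—identifying $d\phi$ with the $\Gamma$-invariant inclusion $R^1\pi_{n*}\ParEnd_0(\Ecal)\hookrightarrow R^1\widetilde\pi_{n*}\End_0(\widetilde{\mathcal{E}})$ and characterizing both sides as pullbacks of the same extension—makes explicit a point the paper leaves tacit (it only remarks parenthetically that $\phi^*$ of an Atiyah algebra need not be an Atiyah algebra), but it is the same argument, not a different one.
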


\begin{proof}
	Applying Proposition
	\ref{cor:Jpetermancorollary} with $\widetilde{X}=\widehat{\Ccal}\times_S
	Y^{\bm\tau,s}_{\SL_r}$,
	$X=\Ccal\times_S Y^{\bm\tau,s }_{\SL_r}$, and $T=Y^{\bm\tau,s}_{\SL_r}$, we get an isomorphism between
	$\operatorname{At}_{Y_{\SL_r}^{\bm \tau,s}/S}(\phi^*\mathcal{L})$ and $R^1\pi_{n*}{}^{par}\mathcal{B}_{{0,\mathcal{C}\times_SY^{\bm \tau,s}_{\SL_r}/Y^{\bm \tau,s}_{\SL_r}}}^{-1}(\Ecal)$ over $Y^{par,s}_{\SL_r}$.
	Now both these sheaves $\operatorname{At}_{Y_{\SL_r}^{\bm \tau,s}/S}({\phi^*\mathcal{L}^{-1}})$ and
	$R^1\pi_{n*}({}^{par}\mathcal{B}_{0,\mathcal{C}\times_SY^{\bm \tau,s}_{\SL_r}/Y^{\bm \tau,s}_{\SL_r}}^{-1}(\Ecal))$ are locally free
	(hence reflexive) and extend over $M^{par,s}_{\SL_r}$. Since they are
	isomorphic on an open subset whose complement has codimension at least two, 
	the isomorphism actually extends to all of $M_{\SL_r}^{par,s}$. 
\end{proof}

\begin{remark}
	The proof of Theorem \ref{thm:atiyah-bott} applies in this
	parabolic setting as well. The fact that the Atiyah-Bott-Narasimhan-Goldman symplectic
	form is in the class of $\phi^\ast\Lcal$ is one of the results of
	\cite{DW97} (see also
	\cite{BiswasRaghavendra}).
\end{remark}

\subsection{Parabolic $G$-bundles} \label{sec:par-principal}

\subsubsection{Parabolic bundles}

Let $p:\widehat C\to C$ be a ramified covering with Galois group
$\Gamma$, so $C=\widehat C/\Gamma$. Let $D\subset C$ and $\widehat
D\subset\widehat C$ denote the branching loci in $C$ and $\widehat C$ respectively. 
Let $\widehat \pi : \widehat P\to \widehat C$ be a
$\Gamma$-principal $G$-bundle, i.e.,\ $\widehat P$ is a principal
$G$-bundle, and  there is a representation
$ \Gamma\to \Aut(\widehat P): \gamma\mapsto F_\gamma$ such that the actions of $\Gamma$
and $G$ on $\widehat P$	commute.
For an open set $U\subset C$, define
$$
\aut_\Gamma(\widehat P)(U)=\left\{ X\in \aut(\widehat P)(p^{-1}(U)) \mid 
X(F_\gamma(p))=(F_\gamma)_\ast (X(p))\ ,\ \forall\ \gamma\in
\Gamma\right\}.
$$
This presheaf defines an $\Ocal_C$-coherent sheaf.
Notice that for $X\in \aut_\pi(\widehat P)$, the vector field
$\widehat v$ on $\widehat C$ is $\Gamma$-invariant, and so descends
to a vector field on $C$ that vanishes along the ramification
divisor $D\subset C$. Hence, we have
a map $\aut_\Gamma(\widehat P)\to TC(-D)$. 

Let $P$ be a parabolic ${G}$-bundle on $C$ and let
$\widehat{P}$ be a $(\Gamma, {G})$-bundle on $\widehat{C}$
such that $P$ and $\widehat{P}$ are related by invariant pushforward. 
\begin{definition}We define the parabolic Atiyah algebra
	${}^{par}\At(P)$  of $P$ to be $\aut_{\Gamma}(\widehat{P})$. 
\end{definition}

Suppose $\widehat P$ is the frame bundle of a vector bundle
$\widehat \Ecal\to \widehat C$, and let $\Ecal\to C$ denote
the sheaf of invariant sections of $\widehat \Ecal$.  
The parabolic Atiyah algebra $\ParAt(\Ecal)$
is $\aut_\Gamma(\widehat P)$.  
We wish to describe the kernel  $\ad_\Gamma(\widehat P)$ as a subsheaf of
$\aut_\Gamma(\widehat P)\to TC(-D)$. Let $\widehat U\subset \widehat C$ be a neighborhood of 
$w_0\in \widehat D$, and let   
$\Gamma_0\subset \Gamma$ be the isotropy group of $w_0$. We assume that
$\Gamma_0$ stabilizes $\widehat U$ and only $w_0\in \widehat U$ has nontrivial
isotropy.  Choose a section $s$ of $\widehat P$ over $\widehat U$. 
For each $\gamma\in \Gamma_0$ there is $\rho_\gamma: \widehat U\to G$
defined by: $F_\gamma(s(w))=s(\gamma w)\rho_\gamma(w)$. 

\begin{definition}
	Define 
	$
	\ad_\Gamma(\widehat P)(U) =\left\{ f\in \ad(\widehat P)(\widehat U) \mid
	f(s(\gamma w))=\Ad_{\rho_\gamma(w)}f(s(w))\right\}.
	$
\end{definition}
The following is straightforward.
\begin{proposition}
	The above definition is independent of the choice of
	section $s$.
\end{proposition}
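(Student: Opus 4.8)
The plan is to reduce the statement to a direct computation resting on two facts already recorded in the excerpt: that a section of $\ad(\widehat P)$ over $\widehat U$ is the same datum as a $G$-equivariant function $f : \widehat P \to \gfrak$ satisfying $f(p\cdot h) = \Ad_{h^{-1}} f(p)$, and that the automorphisms $F_\gamma$ commute with the right $G$-action, i.e.\ $F_\gamma(p\cdot h) = F_\gamma(p)\cdot h$. First I would fix two sections $s,\,s'$ of $\widehat P$ over $\widehat U$; since $\widehat P$ is a principal $G$-bundle and the $G$-action on fibers is free and transitive, there is a unique holomorphic map $g : \widehat U \to G$ with $s'(w) = s(w)\cdot g(w)$.

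The key step is to determine how the cocycle $\rho_\gamma$ transforms under this change of section. For $\gamma \in \Gamma_0$, applying the defining relation $F_\gamma(s(w)) = s(\gamma w)\rho_\gamma(w)$ together with the commutation of $F_\gamma$ with the $G$-action gives
$$ s(\gamma w)\,\rho_\gamma(w)\,g(w) = F_\gamma(s(w))\cdot g(w) = F_\gamma(s'(w)) = s'(\gamma w)\,\rho'_\gamma(w) = s(\gamma w)\, g(\gamma w)\, \rho'_\gamma(w). $$
Cancelling $s(\gamma w)$ by freeness of the $G$-action yields the transformation law $\rho'_\gamma(w) = g(\gamma w)^{-1}\,\rho_\gamma(w)\,g(w)$.

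It then remains to verify that the defining condition is insensitive to this change. Using the equivariance of $f$ one computes $f(s'(\gamma w)) = \Ad_{g(\gamma w)^{-1}} f(s(\gamma w))$, and, substituting the transformation law,
$$ \Ad_{\rho'_\gamma(w)} f(s'(w)) = \Ad_{g(\gamma w)^{-1}\rho_\gamma(w) g(w)}\Ad_{g(w)^{-1}} f(s(w)) = \Ad_{g(\gamma w)^{-1}}\Ad_{\rho_\gamma(w)} f(s(w)). $$
Since $\Ad_{g(\gamma w)^{-1}}$ is invertible, the equality $f(s'(\gamma w)) = \Ad_{\rho'_\gamma(w)} f(s'(w))$ holds precisely when $f(s(\gamma w)) = \Ad_{\rho_\gamma(w)} f(s(w))$, so $s$ and $s'$ cut out exactly the same subsheaf of $\ad(\widehat P)$.

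I do not expect a genuine obstacle here: the content is entirely cocycle bookkeeping, and the result drops out of the two displays above. The only points demanding care are the order and direction of the adjoint actions (one must consistently invoke $\Ad_a\Ad_b = \Ad_{ab}$ and the equivariance with $\Ad_{h^{-1}}$) and the systematic use of the commutation $F_\gamma(p\cdot h) = F_\gamma(p)\cdot h$; I would also flag at the outset that local sections exist because $\widehat P$ is locally trivial and that $g$ is well defined by transitivity and freeness of the fibrewise $G$-action.
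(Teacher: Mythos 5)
Your proof is correct, and it is precisely the ``straightforward'' computation the paper has in mind (the paper states the proposition without proof): the change of section $s' = s\cdot g$ conjugates the cocycle to $\rho'_\gamma(w) = g(\gamma w)^{-1}\rho_\gamma(w)g(w)$, and the equivariance $f(p\cdot h) = \Ad_{h^{-1}}f(p)$ then shows the defining condition for $s'$ differs from that for $s$ by the invertible operator $\Ad_{g(\gamma w)^{-1}}$. Your sign and order conventions match the paper's ($f(pg)=\Ad_{g^{-1}}f(p)$ and commuting $\Gamma$- and $G$-actions), so nothing is missing.
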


Extending the definition for neighborhoods at each of the
branch points defines $\ad_\Gamma(\widehat P)$ globally. 
We also note that the inclusion map $\ad(\widehat P)\to
\aut(\widehat P)$ restricts to an inclusion map $\ad_\Gamma(\widehat P)\to
\aut_\Gamma(\widehat P)$.
Indeed, for $f\in \ad_\Gamma(\widehat P)$,  
it suffices to check the condition on the vector field $X=f^\sharp$
along the section $s$, and this follows from the
equivariance of $F_\gamma$:
$$
f(F_\gamma(s(w))=f(s(\gamma
w)\rho_\gamma(w))=\Ad_{\rho_\gamma(w)^{-1}}f(s(\gamma
w))=f(s(w)).
$$
Hence we have the following short exact sequence of sheaves of Lie algebras on $C$: 
\begin{equation*}
0\lra \ad_\Gamma(\widehat{P})\lra \aut_{\Gamma}(\widehat{P}) \lra TC(-D)\lra 0.
\end{equation*}

Suppose that $\widehat P$ is the frame bundle of a vector bundle
$\widehat \Ecal\to \widehat C$, and let $\Ecal\to C$ denote
the sheaf of invariant sections of $\widehat \Ecal$.  
The parabolic endomorphism bundle of $\Ecal$, namely
$\ParEnd(\Ecal)$,
is $\ad_\Gamma(\widehat P)$.  
We refer to the above sequence as the {\em fundamental sequence} for parabolic Atiyah algebras. 
From the  discussion above,  we have an exact sequence
\begin{equation*}
0\,\lra\, \ParEnd(\Ecal)\,\lra\ \ParAt(\Ecal)\lra TC(-D)\lra 0.
\end{equation*}
We note that this is nothing but the $\Gamma$-invariant
push-forward of the Atiyah algebra exact
sequence on $\widehat C$.

\subsubsection{Strongly parabolic Atiyah algebras}

Next, we  define a ``strongly parabolic'' version of this
construction.  Set
\begin{align*}
\aut(\widehat P)(-\widehat{D})&:=\{ X \in \aut(\widehat P) \mid X(p)=0 \text{ for
} \widehat\pi(p)\in \widehat D\}. \\
\ad(\widehat P)(-\widehat D)&:=\{ f \in \ad(\widehat P) \mid f(p)=0 \text{ for
}\widehat \pi(p)\in \widehat D\}. \\
\end{align*}
Then we have the restricted short exact sequence
\begin{equation*}
0\lra \ad(\widehat P)(-\widehat D)\lra \aut(\widehat P)(-\widehat D)\lra T\widehat
C(-\widehat D)\lra 0.
\end{equation*}
Now if $\widehat{P}$ is the frame bundle of $\widetilde{E}$, then by
Seshadri's correspondence (see Appendix B), the $\Gamma$-invariant part
$\ad_\Gamma(\widehat P(-\widehat D))$ 
is identified with the \emph{strongly} parabolic 
endomorphisms $\SParEnd(P)$. 
Motivated by this observation,  we  have:

\begin{definition}
	The strongly parabolic
	Atiyah algebra ${}^{spar}\At(P)$
	of a $\Gamma$-linearized
	principal bundle $\widehat{P}$ on $\widehat{C}$ is defined to be the
	$\Gamma$-invariant part of $\aut(\widehat P)(-\widehat D)$. 
	
	If $\widehat{P}$ is the frame bundle 
	of a vector bundle $\widehat{\Ecal}$, then we denote it by
	$\SParAt(\Ecal):=\aut_\Gamma(\widehat P)$. Noting that 
	$p_\ast(T\widehat C(-\widehat D))^\Gamma=TC(-D)$, 
	we have the following short exact sequence on $C$
	$$
	0\lra \SParEnd(\Ecal)\lra \SParAt(\Ecal)\lra TC(-D)\lra 0.
	$$
\end{definition} 

\subsubsection{Determinant line bundle for parabolic $G$-bundles}\label{sec:par-det}

Let $\widehat{\Pcal}$ be a family of $(\Gamma,G)$-bundles
parametrized by $T$  as in the previous section, and let $\Pcal$ be the family of parabolic $G$-bundles obtained by applying the invariant pushforward functor.

Consider the relative parabolic Atiyah algebra $\ParAt_{{X}/T}(\Pcal):=
p_{*}^{\Gamma}(\At_{\widetilde{X}/T} (\widehat{\Pcal})$ and the strongly parabolic Atiyah algebra
$\SParAt_{{X}/T}({\Pcal})=p_*^{\Gamma}(\At_{\widetilde{X}/T}(\widehat{\Pcal})(-\widehat{D}))$. As in
the case of parabolic vector bundles, they fit in the following fundamental exact sequences 
\begin{align}
\begin{split} \label{eqn:fund-atiyah}
0 \lra \ParEnd(\Pcal) \lra \ParAt_{{X}/T}(\Pcal)\lra
\mathcal{T}_{{X}/T}(-D)\lra 0,\\
0 \lra\SParEnd(\Pcal)
\lra\SParAt_{{X}/T}(\Pcal)\lra\mathcal{T}_{{X}/T}(-D)\lra 0,
\end{split}
\end{align}
where $\ParEnd(\Pcal)$ (respectively, $\SParEnd(\Pcal)$) denote the parabolic (respectively, strongly parabolic)
endomorphism bundle of $\Pcal$. As in the case of parabolic vector bundles we get the following quasi-Lie algebra 
\begin{equation}\label{eqn:quasiLiealgebra}
0 \lra\Omega_{{X}/T}
\lra(\ParAt_{\widetilde{X}/T}(\Pcal)(D))^{\vee}\lra(\SParEnd(\Pcal)(D))^{\vee}\lra 0. 
\end{equation}
The Cartan-Killing form $\kappa_{\gfrak}$ gives an  identification 
\begin{equation} \label{eqn:CK}
\nu_\gfrak^{-1}: (\SParEnd(\Pcal)(D))^{\vee}\isorightarrow
\ParEnd(\Pcal).
\end{equation}
Pulling  back the exact sequence in \eqref{eqn:quasiLiealgebra} by 
the above isomorphism, we get 
a quasi-Lie algebra ${}^{spar}\widetilde{\At}_{X/T}(\Pcal)$
fitting into the following exact sequence:
\begin{equation*}
\adjustbox{scale=0.9}{
	\begin{tikzcd}
	0 \arrow[r]& \Omega_{{X}/T} \arrow[r]& (p_*^{\Gamma}(\At_{\widetilde{X}/T}(\widehat\Pcal)(-\widehat{D}))(D))^{\vee} \arrow[r]& {(\SParEnd(\Pcal)(D))^{\vee}\cong(p_*^{\Gamma}(\ad(\widehat{\Pcal})(-\widehat{D}))(D))^{\vee}}\arrow[r]& 0\\
	0 \arrow[r]& \Omega_{{X}/T}\arrow[u, equal] \arrow[r]& 
	{}^{spar}\widetilde{\At}_{X/T}(\Pcal) \arrow[r]\arrow[u,"\cong"]& \ParEnd(\Pcal)\arrow[r]\arrow[u,"\cong"]& 0
	\end{tikzcd}}
\end{equation*}
which should be considered as a parabolic $G$-bundle 
analog of Ginzburg's dgla considered in \cite{Ginzburg}. Recall that we
have the  quasi-Lie algebra $\Sfrak^{-1}(\widehat{\Pcal})$ associated to a family $\widehat{\Pcal}$ of principal $G$-bundles satisfying: 
\begin{equation*}
\begin{tikzcd}
0 \arrow[r] & \Omega_{\widetilde{X}/T} \arrow[r]& \At_{\widetilde{X}/T}(\widehat{\Pcal})^{\vee} \arrow[r] & \ad(\widehat{\Pcal})^{\vee} \arrow[r] &0\\
0 \arrow[r] & \Omega_{\widetilde{X}/T}\arrow[r] \arrow[u,equal]&
\Sfrak^{-1}_{\widetilde{X}/T}(\widehat{\Pcal}) \arrow[u,"\cong"]\arrow[r] & \ad(\widehat{\Pcal}) \arrow[r] \arrow[u,"\cong"]&0.
\end{tikzcd}
\end{equation*}
Now since $\widehat{ \Pcal}$ is $\Gamma$-linearized, we conclude that all objects in the above exact
sequence are $\Gamma$-linearized.
We define ${}^{par}\Sfrak^{-1}_{{X}/T}(P):= p_*^{\Gamma}\Sfrak^{-1}_{\widetilde{X}/T}(\widehat{ \Pcal})$. 

Taking $\Gamma$-invariant pushforward of the bottom row, we get the following extension 
\begin{equation*}
0 \lra \Omega_{X/T} \lra {}^{par}\Sfrak^{-1}_{{X}/T}(\Pcal ) \lra \ParEnd(\Pcal)\lra 0.
\end{equation*}

We now have the following proposition: 

\begin{proposition}\label{prop:paraoblicGdgla}
	There is an isomorphism
	${}^{par}\Sfrak^{-1}_{{X}/T}(\Pcal)
	\isorightarrow {}^{spar}\widetilde{\At}_{X/T}(\Pcal)$ which induces identity maps on $\Omega_{X/T}$ and $\Par(\Pcal)$. 
\end{proposition}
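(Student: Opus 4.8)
The plan is to reduce everything to the relative Atiyah algebra of $\widehat{\Pcal}$ on $\widetilde{X}$ and then apply the exact functor $p_*^{\Gamma}$. Recall from Lemma \ref{lem:ginzburg} and the construction of the Ginzburg complex that $\Sfrak^{-1}_{\widetilde{X}/T}(\widehat{\Pcal})$ is, by definition, the pullback of the dual relative Atiyah sequence
$$0 \lra \Omega_{\widetilde{X}/T} \lra \At_{\widetilde{X}/T}(\widehat{\Pcal})^{\vee} \lra \ad(\widehat{\Pcal})^{\vee} \lra 0$$
along the Cartan--Killing isomorphism $\nu_{\gfrak}\colon \ad(\widehat{\Pcal}) \isorightarrow \ad(\widehat{\Pcal})^{\vee}$ of \eqref{e2}. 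Since $p$ is finite, $p_*^{\Gamma}$ is exact and commutes with the formation of pullback extensions, so ${}^{par}\Sfrak^{-1}_{{X}/T}(\Pcal) = p_*^{\Gamma}\Sfrak^{-1}_{\widetilde{X}/T}(\widehat{\Pcal})$ is the pullback of the extension $p_*^{\Gamma}(\At_{\widetilde{X}/T}(\widehat{\Pcal})^{\vee})$ of $p_*^{\Gamma}\ad(\widehat{\Pcal})^{\vee}$ by $p_*^{\Gamma}\Omega_{\widetilde{X}/T} \cong \Omega_{X/T}$, taken along $p_*^{\Gamma}\nu_{\gfrak}\colon \ParEnd(\Pcal) = p_*^{\Gamma}\ad(\widehat{\Pcal}) \isorightarrow p_*^{\Gamma}\ad(\widehat{\Pcal})^{\vee}$.

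The crux is therefore to identify the extension $(\SParAt_{X/T}(\Pcal)(D))^{\vee}$ appearing in \eqref{eqn:quasiLiealgebra} with $p_*^{\Gamma}(\At_{\widetilde{X}/T}(\widehat{\Pcal})^{\vee})$. By the fundamental sequences \eqref{eqn:fund-atiyah} one has $\SParAt_{X/T}(\Pcal) = p_*^{\Gamma}(\At_{\widetilde{X}/T}(\widehat{\Pcal})(-\widehat{D}))$, and the canonical duality pairing $\At_{\widetilde{X}/T}(\widehat{\Pcal})(-\widehat{D}) \otimes \At_{\widetilde{X}/T}(\widehat{\Pcal})^{\vee} \lra \mathcal{O}_{\widetilde{X}}(-\widehat{D})$ is $\Gamma$-equivariant. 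Applying $p_*^{\Gamma}$, using $p_*^{\Gamma}\mathcal{O}_{\widetilde{X}}(-\widehat{D}) \cong \mathcal{O}_X(-D)$ and the relation $\Omega_{\widetilde{X}/T} \cong p^*\Omega_{X/T}(\widehat{D})$ (equivalently $\omega_{\widetilde{X}/X} \cong \mathcal{O}_{\widetilde{X}}(\widehat{D})$), I will produce a pairing
$$\SParAt_{X/T}(\Pcal)(D) \otimes p_*^{\Gamma}(\At_{\widetilde{X}/T}(\widehat{\Pcal})^{\vee}) \lra \mathcal{O}_X$$
and prove it nondegenerate. This is the exact analog for the full Atiyah algebra of the trace pairing of Proposition \ref{prop:parabolictrace}, and is established by the same invariant-pushforward-of-a-residue argument as in Theorem \ref{thm:maindgla}; nondegeneracy is a local statement at the branch points, verified in the Seshadri local model. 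This is the step I expect to demand the most care. It yields $(\SParAt_{X/T}(\Pcal)(D))^{\vee} \cong p_*^{\Gamma}(\At_{\widetilde{X}/T}(\widehat{\Pcal})^{\vee})$ as extensions, compatibly with sub and quotient: the sub $\Omega_{X/T}$ maps by the identity to $p_*^{\Gamma}\Omega_{\widetilde{X}/T} \cong \Omega_{X/T}$, and the quotient identification $(\SParEnd(\Pcal)(D))^{\vee} \cong p_*^{\Gamma}\ad(\widehat{\Pcal})^{\vee}$ is the invariant pushforward of the duality on $\widetilde{X}$.

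Finally I will check that the two Cartan--Killing identifications agree. Since $\Gamma$ acts on $\widehat{\Pcal}$ by bundle automorphisms, the induced action on $\ad(\widehat{\Pcal})$ preserves the fibrewise form $\kappa_{\gfrak}$, so $\nu_{\gfrak}$ is $\Gamma$-equivariant and $p_*^{\Gamma}\nu_{\gfrak}$ coincides, under the quotient identification just obtained, with the isomorphism $\nu_{\gfrak}\colon \ParEnd(\Pcal) \isorightarrow (\SParEnd(\Pcal)(D))^{\vee}$ of \eqref{eqn:CK}. Consequently both ${}^{par}\Sfrak^{-1}_{{X}/T}(\Pcal)$ and ${}^{spar}\widetilde{\At}_{X/T}(\Pcal)$ are the pullback of the single extension $p_*^{\Gamma}(\At_{\widetilde{X}/T}(\widehat{\Pcal})^{\vee})$ along one and the same isomorphism $\ParEnd(\Pcal) \isorightarrow p_*^{\Gamma}\ad(\widehat{\Pcal})^{\vee}$. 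The universal property of pullback extensions then furnishes the desired isomorphism ${}^{par}\Sfrak^{-1}_{{X}/T}(\Pcal) \isorightarrow {}^{spar}\widetilde{\At}_{X/T}(\Pcal)$, which by construction restricts to the identity on $\Omega_{X/T}$ and on $\ParEnd(\Pcal) = \Par(\Pcal)$. In contrast with the vector-bundle case of Theorem \ref{thm:maindgla}, no sign appears on the quotient, precisely because both constructions pull back along $\nu_{\gfrak}$ in the same direction.
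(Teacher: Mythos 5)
Your proposal is correct and takes essentially the same route as the paper: the paper's proof likewise twists the canonical nondegenerate pairing $\langle\, ,\, \rangle\colon \Sfrak^{-1}_{\widetilde{X}/T}(\widehat{\Pcal})\times \At_{\widetilde{X}/T}(\widehat{\Pcal})\to \mathcal{O}_{\widetilde{X}}$ (which is your duality pairing transported through $\Sfrak^{-1}_{\widetilde{X}/T}(\widehat{\Pcal})\cong \At_{\widetilde{X}/T}(\widehat{\Pcal})^{\vee}$) by $\mathcal{O}_{\widetilde{X}}(-\widehat{D})$ and takes the invariant pushforward $p_*^{\Gamma}$ to obtain the duality between ${}^{par}\Sfrak^{-1}_{X/T}(\Pcal)$ and $\SParAt_{X/T}(\Pcal)(D)$. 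Your additional checks—the exactness of $p_*^{\Gamma}$ commuting with pullback extensions, the $\Gamma$-equivariance of $\nu_{\gfrak}$ matching the identification in \eqref{eqn:CK}, and the observation that no sign appears—are left implicit in the paper's terse proof but are the same argument spelled out.
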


\begin{proof} Recall that there is a natural nondegenerate pairing
	$\langle\,  ,\,  \rangle: \mathfrak{S}^{-1}_{\widetilde{X}/T}(\widehat\Pcal)\times \At_{\widetilde{X}/T}(\widehat \Pcal)
	\rightarrow \mathcal{O}_{\widetilde{X}}.$
	Tensoring it with $\mathcal{O}_{\widetilde{X}}(-\widehat{D})$, we get a $\mathcal{O}_{\widetilde{X}}
	(-\widehat{D})$-valued pairing
	$$
	\langle\,  ,\,  \rangle: \mathfrak{S}^{-1}_{\widetilde{X}/T}(\widehat\Pcal)\times
	\At_{\widetilde{X}/T}(\widehat \Pcal)(-\widehat{D})\lra\mathcal{O}_{\widetilde{X}}(-\widehat{D}).
	$$
	Taking invariant pushforward, we get the following nondegenerate pairing:
	$$p_*^{\Gamma}\langle\,  ,\,  \rangle:
	p_*^{\Gamma}\Sfrak^{-1}_{\widetilde{X}/T}(\widehat\Pcal)\times
	p_*^{\Gamma}(\At_{\widetilde{X}/T}(\widehat \Pcal)(-\widehat{D}))\lra p_*^{\Gamma}\mathcal{O}_{\widetilde{X}}(-\widehat{D}).$$
	This produces a duality between ${}^{par}\Sfrak^{-1}_{{X}/T}(\Pcal)$ and $p_*^{\Gamma}(\At_{\widetilde{X}/T}(\widehat{\Pcal})(-\widehat{D}))(D)$. This completes the proof of the proposition.
\end{proof}
\subsection{The general set of parabolic $G$-bundles}
Let $M^{\bm\tau,ss}_{G}$ (respectively, $M^{\bm\tau,rs}_{G}$) be the moduli space of semistable
(respectively, regularly stable) $(\Gamma, G)$ 
bundles on a curve $\widehat{C}$, and let $\phi: G \rightarrow \SL_r$ be a representation. We assume without loss of generality that $\widehat{M}_{G}^{rs}$ is non-empty. 
Note that for a semistable 
$(\Gamma,G)$, the underlying $G$-bundle is semistable (\cite{Biswasduke}, \cite{BBNtohuku}).
We also use the same 
notation for a relative family of $\Gamma$ covers $\widehat{\mathcal{C}}\rightarrow S$.
Consider the induced maps ${M}^{\bm\tau,ss}_{G}\to\widehat{M}^{ss}_{G} \stackrel{\phi}{\lra} 
\widehat{M}^{ss}_{\SL_r}$. Let $\widehat{M}^{rs}_{G}$ be the locus of regularly stable bundles on $\widehat{C}$. 
Then by \cite{Faltings:93}, we get that the complement of the regularly stable locus is at least two provided 
$g\geq 2$ and $G$ is any simple group different from
$\SL_2$ and $g\geq 3$ if the Lie algebra of $G$ has a $\mathfrak{sl}(2)$ factor. 
Now as in the $\SL_r$ case, let $Y^{\bm \tau 
	,rs}_{G}$ be the inverse image of $\widehat{M}^{rs}_G$ in $M_{G}^{\bm \tau,ss}$. Moreover, the complement of 
$Y^{\bm \tau,rs}_G$ has codimension at least two provided the genus
$g(\mathscr{C})$ of the orbifold curve $\mathscr{C}=[\widehat{C}/\Gamma]$
(cf.\ \cite[Lemma 8.3]{BMW1}) determined by $\bm \tau$ is at least three,
or  if the Lie algebra of $G$ has no $\mathfrak{sl}(2)$ or $\mathfrak{sp}(4)$ factor,  $g(\mathscr{C})\geq 2$.

As before let $\mathcal{L}_{\phi}$ be the pull-back of the determinant of
cohomology $\mathcal{L}$ to $M^{\bm\tau,ss}_{G}$.
Then by applying Proposition \ref{prop:paraoblicGdgla}, and Theorem \ref{thm:atiyahG}, we get
the following.

\begin{corollary} \label{cor:atiyah-det-ginzburg}
	There is a natural isomorphism of Atiyah algebras over the regularly stable locus $Y_{G}^{\bm \tau,rs}$:
	$$\frac{1}{m_{\phi}}\At_{M_{G}^{\bm \tau,rs}/S}(\mathcal{L}_{\phi})\isorightarrow
	R^1\pi_{n \ast}\left({}^{par}\Sfrak^{-1}_{\mathcal{C}\times_S M_{G}^{\bm \tau ,rs}/M_G^{\bm \tau, rs}}(\Pcal)\right).$$
	Since by assumption, the complement of $Y_{G}^{\bm \tau,rs}$ in $M_{G}^{\bm \tau,rs}$ is at least two, the above isomorphism extends over the entire space $M_{G}^{{\bm \tau},rs}$. 
\end{corollary}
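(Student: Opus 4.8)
The plan is to carry out the derivation of Theorem~\ref{thm:atiyahG} \emph{$\Gamma$-equivariantly} on the ramified cover and then descend by invariant pushforward. I would work over the regularly stable locus $Y_G^{\bm\tau,rs}$, where the $\Gamma$-linearized universal bundle $\widehat\Pcal$ on $\widehat\Ccal\times_S Y_G^{\bm\tau,rs}$ exists \'etale-locally. The starting observation is that every sheaf and map appearing in the big diagram of Section~\ref{sec:assoc-det} is canonical, hence $\Gamma$-equivariant; since $|\Gamma|$ is invertible, the invariant pushforward $p_*^\Gamma$ is exact and carries each term to its parabolic counterpart. By definition $p_*^\Gamma\Sfrak^{-1}_{\widetilde X/T}(\widehat\Pcal)={}^{par}\Sfrak^{-1}_{X/T}(\Pcal)$, while the associated $\SL_r$ Bloch--Esnault complex $\Bcal^{-1}_{0}(\widehat\Ecal_\phi)$ is sent to ${}^{par}\Bcal^{-1}_{0,X/T}(\Ecal_\phi)$ by Proposition~\ref{prop:comparingdgla} and Theorem~\ref{thm:maindgla}.

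Next I would assemble the chain of isomorphisms on $Y_G^{\bm\tau,rs}$. Using $R^1\widetilde\pi_{n\ast}=R^1\pi_{n\ast}\circ p_\ast$ together with the commutation of $R^1\pi_{n\ast}$ with the exact functor of $\Gamma$-invariants, the invariant pushforward of the diagram yields a map of fundamental exact sequences relating $R^1\pi_{n\ast}({}^{par}\Sfrak^{-1}_{X/T}(\Pcal))$ to $R^1\pi_{n\ast}({}^{par}\Bcal^{-1}_{0,X/T}(\Ecal_\phi))$ in which the $\Omega_{X/T}$-terms are matched up to the factor $m_\phi$ and the quotients by $-\phi_\ast$; this is the parabolic form of Propositions~\ref{prop:veryimportant} and~\ref{prop:today2}, and the hypercohomology collapse of Proposition~\ref{prop:old1} survives verbatim because the relative dimension is still one. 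Pulling back Theorem~\ref{thm:atiyah-det-be} along the map $M_G^{\bm\tau,rs}\to M^{par,s}_{\SL_r}$ induced by $\phi$, and applying the base-change isomorphism (the parabolic analog of Proposition~\ref{conj:firstone}, valid since $R^2\pi_{n\ast}$ vanishes on the one-dimensional fibers and $R^1\pi_{n\ast}({}^{par}\Bcal^{-1}_0)$ is locally free), identifies the $\SL_r$ end of the chain with $\At_{M_G^{\bm\tau,rs}/S}(\mathcal{L}_\phi^{-1})$. Composing the arrows and tracking the normalizations gives $\tfrac{1}{m_\phi}\At_{M_G^{\bm\tau,rs}/S}(\mathcal{L}_\phi)\simeq R^1\pi_{n\ast}({}^{par}\Sfrak^{-1}(\Pcal))$; Proposition~\ref{prop:paraoblicGdgla} is what lets me recognize the strongly parabolic Atiyah algebra ${}^{spar}\widetilde\At_{X/T}(\Pcal)$ produced by the pushforward as exactly ${}^{par}\Sfrak^{-1}(\Pcal)$.

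The deformation-theoretic input that makes Theorem~\ref{thm:atiyahG} applicable is hypothesis~\eqref{eqn:natural}, which here becomes the parabolic Kodaira--Spencer isomorphism $\mathcal{T}_{Y_G^{\bm\tau,rs}/S}\cong R^1\pi_{n\ast}\ParEnd(\Pcal)$. I would verify it by writing $\ParEnd(\Pcal)=(p_\ast\ad\widehat\Pcal)^\Gamma$, so that $R^1\pi_{n\ast}\ParEnd(\Pcal)=(R^1\widetilde\pi_{n\ast}\ad\widehat\Pcal)^\Gamma$ is precisely the space of $\Gamma$-invariant, i.e.\ parabolic, first-order deformations of $\Pcal$, which is the relative tangent space of the $(\Gamma,G)$-moduli over $S$. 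Finally, since the complement of $Y_G^{\bm\tau,rs}$ in $M_G^{\bm\tau,rs}$ has codimension at least two under the genus hypotheses recorded before the statement, and both sides are locally free hence reflexive, the isomorphism extends across this complement by Hartogs, giving the statement on all of $M_G^{\bm\tau,rs}$.

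The main obstacle I expect is the $\Gamma$-equivariant bookkeeping rather than any new geometric idea. One must check that \emph{every} arrow in the non-parabolic derivation is genuinely $\Gamma$-equivariant: in particular that the residue pairing, the local holomorphic connections chosen in Proposition~\ref{prop:tricky1}, and the splitting of~\eqref{eqn:fibersplit} can all be taken $\Gamma$-invariantly; that $p_*^\Gamma$ commutes with the direct images and base-change maps used; and that the Dynkin-index normalization is unaffected by passing to invariants. These, together with the parabolic Kodaira--Spencer isomorphism above --- without which the spectral-sequence collapse in Proposition~\ref{prop:old1} would fail --- are the points requiring care.
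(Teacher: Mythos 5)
Your proposal is correct and takes essentially the same route as the paper: the paper's proof is precisely the combination of Proposition \ref{prop:paraoblicGdgla} (identifying ${}^{par}\Sfrak^{-1}_{X/T}(\Pcal)$ with the strongly parabolic quasi-Lie algebra via the invariant pushforward of the pairing) with Theorem \ref{thm:atiyahG} and the invariant-pushforward chain of Section 4 (Theorem \ref{thm:maindgla}, Proposition \ref{cor:Jpetermancorollary}, Theorem \ref{thm:atiyah-det-be}), followed by the same codimension-two reflexivity extension across the complement of $Y_{G}^{\bm \tau,rs}$. Your extra bookkeeping --- the $\Gamma$-equivariance of each arrow, the $\Gamma$-invariant splitting for the fiber product family, and the parabolic Kodaira--Spencer isomorphism replacing hypothesis \eqref{eqn:natural} --- simply makes explicit what the paper's terse proof leaves implicit, rather than constituting a different argument.
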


\bibliographystyle{amsplain}
\bibliography{papers}

\end{document}